\documentclass{amsart}
\usepackage[utf8]{inputenc}
\usepackage{amsmath}
\usepackage{amsfonts}
\usepackage{amssymb}
\usepackage{amsthm}
\usepackage{amscd}
\usepackage{float}
\usepackage{tikz}
\usepackage{graphicx}
\usepackage[colorlinks=true]{hyperref}
\hypersetup{urlcolor=blue, citecolor=red}
\usepackage{hyperref}

  \textheight=8.2 true in
   \textwidth=5.0 true in
    \topmargin 30pt
     \setcounter{page}{1}

\newtheorem{theorem}{Theorem}[section]
\newtheorem{corollary}[theorem]{Corollary}

\newtheorem{lemma}[theorem]{Lemma}
\newtheorem{proposition}[theorem]{Proposition}

\theoremstyle{definition}

\newtheorem{remark}[theorem]{Remark}

\newcommand{\R}{\mathbb{R}}
\newcommand{\N}{\mathbb{N}}
\newcommand{\C}{\mathbb{C}}

\newcommand{\Z}{\mathbb{Z}}

\makeatletter
\@namedef{subjclassname@2020}{%
  \textup{2020} Mathematics Subject Classification}
\makeatother

\begin{document}

\title[Generalized square function estimates]{Generalized square function estimates for curves and their conical extensions}
\author{Robert Schippa}

\begin{abstract}
We show sharp square function estimates for curves in the plane whose curvature degenerates at a point and estimates sharp up to endpoints for cones over these curves. To this end, for curves of finite type we extend the classical C\'ordoba--Fefferman biorthogonality. For cones over degenerate curves, we analyze wave envelope estimates via High-Low-decomposition. The arguments are subsequently extended to the cone over the complex parabola.
\end{abstract}
\keywords{square function estimates, finite-type curves, complex cone, High-Low-method}
\subjclass[2020]{42B25, 42B37.}

\maketitle

\section{Introduction}

We consider square function estimates for curves with degeneracy and for their conical extensions. Let $\Gamma$ be a non-degenerate curve $\Gamma = \{(\xi,h(\xi)): \xi \in (-1,1) \}$ with $h \in C^2(-1,1)$, $h(0) = h'(0)=0$, and $h''(\xi) \sim 1$. Let $F \in \mathcal{S}(\R^2)$ with $\text{supp}(\hat{F}) \subseteq \mathcal{N}_\delta(\Gamma)$ with $\mathcal{N}_\delta(\Gamma)$ denoting the $\delta$-neighbourhood of $\Gamma$. Let $\Theta_\delta$ denote a covering of $\Gamma$ with rectangles $\theta$ of size $C \delta^{\frac{1}{2}} \times C \delta$, the short side pointing into normal direction, and the long side pointing into tangential direction. $C$ is chosen large enough such that $\theta \in \Theta_{\delta}$ cover $\mathcal{N}_{\delta}(\Gamma)$. The classical C\'ordoba--Fefferman square function
%\footnote{Alphabetical, but reverse chronological order.} 
\cite{Cordoba1979,Cordoba1982,Fefferman1973} estimate reads
\begin{equation}
\label{eq:ClassicalCF}
\| F \|_{L^4(\R^2)} \lesssim \big\| \big( \sum_{\theta \in \Theta_\delta} |F_\theta|^2 \big)^{\frac{1}{2}} \big\|_{L^4(\R^2)}.
\end{equation}
%(Image: On a non-degenerate curve, e.g., the sector of a circle, the decomposition is carried out into rectangles of uniform tangential length.)

Above $F_\theta$ denotes a mollified Fourier projection to $\theta$, which comprises a smooth partition of unity on $\mathcal{N}_\delta(\Gamma)$.
Whereas the proof consists of a simple geometric observation, this estimate is of immense significance for estimating Fourier multipliers and exemplarily yields (in a well-known combination with maximal function estimates) the sharp estimate for Bochner--Riesz multipliers in two dimensions, as pointed out in the aforementioned references. We refer to \cite{Stein1993} for further reading.

\smallskip

Here we shall look into variants for degenerate curves. We shall analyze in detail the model case: $\gamma_k = \{(\xi,\xi^k) : \xi \in [-1,1] \}$ for $k \in \N_{\geq 2}$. For $k=2$ this is a non-degenerate curve and the classical estimate applies. For $k \geq 3$, the curvature degenerates at the origin and the decomposition of $\gamma_k$ into rectangles of size $\delta^{\frac{1}{2}} \times \delta$ does not seem to be appropriate anymore. A decomposition into longer intervals of length $\delta^{\frac{1}{k}}$ upon projection to the first coordinate was pointed out by Biggs--Brandes--Hughes \cite{BiggsBrandesHughes2022}; see also Gressman \emph{et al.} \cite{GressmanGuoPierceRoosYung2021} highlighting the connection between counting and square function estimates and references therein. However, the $\delta^{\frac{1}{k}}$-intervals see some part of the curvature away from the origin, so the curve does no longer fit into rectangles of size $\delta^{\frac{1}{k}} \times \delta$.

Indeed, the rectangles $\theta$ occuring in the square function estimate in \eqref{eq:ClassicalCF} linearize the curve on the largest possible scale with an error of size $\delta$. This will be the guiding principle for us.

Let $\Theta_{\delta,k}$ be a covering of $\gamma_k$ with rectangles as follows: One rectangle of size comparable to $C \delta^{\frac{1}{k}} \times C \delta$ centered at the origin, and for $|\xi| \gtrsim \delta^{\frac{1}{k}}$, choose a rectangle centered at $(\xi,\xi^k)$ of length $\delta^{\frac{1}{k}} / |\xi|^{\frac{k-2}{2}}$ into tangential direction and length $\delta$ into normal direction. $C \geq 1$ is chosen large enough such that $\Theta_{\delta,k}$ forms a covering of $\mathcal{N}_\delta(\gamma_k)$.  We show the following:
\begin{theorem}
\label{thm:DegenerateSquareFunctionEstimateCurves}
Let $k \in \N_{\geq 2}$, and $F \in \mathcal{S}(\R^2)$ with $\text{supp}(\hat{F}) \subseteq \mathcal{N}_{\delta}(\gamma_k)$. Then the following estimate holds:
\begin{equation}
\label{eq:DegenerateSquareFunctionEstimate}
\| F \|_{L^4(\R^2)} \lesssim \big\| \big( \sum_{\theta \in \Theta_{\delta,k}} |F_{\theta}|^2 \big)^{\frac{1}{2}} \big\|_{L^4(\R^2)}.
\end{equation}
\end{theorem}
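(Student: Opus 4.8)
The plan is to reduce \eqref{eq:DegenerateSquareFunctionEstimate} to an almost-orthogonality statement for the pieces $F_\theta$, $\theta \in \Theta_{\delta,k}$, in the spirit of C\'ordoba and Fefferman, the novelty being that it has to be made uniform across the whole range of scales on which the curvature of $\gamma_k$ degenerates. Expanding $\|F\|_{L^4(\R^2)}^4 = \bigl\| |F|^2 \bigr\|_{L^2(\R^2)}^2$ with $F = \sum_\theta F_\theta$, Plancherel gives $\|F\|_{L^4(\R^2)}^4 = \sum_{\theta_1,\theta_2,\theta_3,\theta_4} \langle F_{\theta_1}\overline{F_{\theta_2}}, F_{\theta_3}\overline{F_{\theta_4}}\rangle$, and since $\widehat{F_{\theta_i}\overline{F_{\theta_j}}}$ is supported in the $\delta$-fattened Minkowski difference $\theta_i - \theta_j$, the summand vanishes unless $(\theta_1 - \theta_2) \cap (\theta_3 - \theta_4) \neq \emptyset$. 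The key geometric claim is that this condition forces, after possibly interchanging $\theta_1 \leftrightarrow \theta_4$ and $\theta_2 \leftrightarrow \theta_3$, either that $\theta_1$ lies within $O(1)$ caps of $\theta_2$ and $\theta_4$ within $O(1)$ caps of $\theta_3$ (the \emph{diagonal} quadruples), or -- only when $k$ is odd, because then $\gamma_k$ is symmetric under $p \mapsto -p$ and the sum map has a degenerate Jacobian along $\xi_{\theta_1} + \xi_{\theta_4} = 0$ -- that $\theta_2$ lies within $O(1)$ caps of $-\theta_1$ and $\theta_3$ within $O(1)$ caps of $-\theta_4$ (the \emph{antipodal} quadruples). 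Granting this, the diagonal quadruples are summed by Cauchy--Schwarz and AM--GM to at most $\sum_{\theta_1,\theta_4} \int |F_{\theta_1}|^2 |F_{\theta_4}|^2 = \bigl\| (\sum_\theta |F_\theta|^2)^{1/2} \bigr\|_{L^4}^4$, while the antipodal quadruples, after performing the sums over $\theta_1,\theta_4$, form a term bounded by $\bigl\| \sum_\theta F_\theta \overline{F_{-\theta}} \bigr\|_{L^2}^2$, which is again $\le \bigl\| \sum_\theta |F_\theta|^2 \bigr\|_{L^2}^2 = \bigl\| (\sum_\theta |F_\theta|^2)^{1/2} \bigr\|_{L^4}^4$ by the pointwise inequality $|\sum_\theta F_\theta \overline{F_{-\theta}}| \le (\sum_\theta |F_\theta|^2)^{1/2}(\sum_\theta |F_{-\theta}|^2)^{1/2}$ and the invariance of $\Theta_{\delta,k}$ under $\theta \mapsto -\theta$. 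This would give \eqref{eq:DegenerateSquareFunctionEstimate} with an absolute constant.

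To prove the geometric claim I would use the exact anisotropic dilation symmetry $(\xi,\eta) \mapsto (\lambda^{-1}\xi, \lambda^{-k}\eta)$ of $\gamma_k$ together with the defining property of $\Theta_{\delta,k}$: by construction the tangential length $\ell(\theta)$ of a cap over $\{|\xi| \sim \lambda\}$ -- comparable to $\delta^{1/2}\lambda^{-(k-2)/2}$ -- is exactly the scale on which $\gamma_k$ departs from its tangent line by $O(\delta)$, so this dilation carries $\mathcal{N}_\delta(\gamma_k \cap \{|\xi| \sim \lambda\})$ onto $\mathcal{N}_{\delta\lambda^{-k}}$ of the non-degenerate curve $\{(u,u^k) : |u| \sim 1\}$ and carries the caps of $\Theta_{\delta,k}$ over $\{|\xi| \sim \lambda\}$ precisely onto the canonical $(\delta\lambda^{-k})^{1/2} \times (\delta\lambda^{-k})$ caps of that curve. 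Hence, when all four caps lie over a single dyadic block $\{|\xi| \sim \lambda\}$ -- and, trivially, when they lie in the central cap $\{|\xi| \lesssim \delta^{1/k}\}$, which is a single rectangle -- the intersection condition is controlled by the classical biorthogonality underlying \eqref{eq:ClassicalCF}, applied on the $\xi > 0$ and $\xi < 0$ halves separately, and produces the diagonal alternative; here one uses the standard fact that for a strictly convex graph the sum map $(\xi_1, \xi_4) \mapsto (\xi_1 + \xi_4, \xi_1^k + \xi_4^k)$ is two-to-one with quantitative non-degeneracy away from $\xi_1 = \xi_4$.

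The substantial part -- which I expect to be the main obstacle -- is to handle quadruples whose caps straddle several dyadic blocks, and to reconcile this with the antipodal alternative for odd $k$, without losing a power of the number $\sim \log(1/\delta)$ of blocks. Here the plan is to exploit the factorization $\xi_{\theta_1}^k - \xi_{\theta_2}^k = (\xi_{\theta_1} - \xi_{\theta_2})\sum_{i=0}^{k-1}\xi_{\theta_1}^i \xi_{\theta_2}^{k-1-i}$ and to read off from $(\theta_1 - \theta_2) \cap (\theta_3 - \theta_4) \neq \emptyset$ the two scalar bounds $|(\xi_{\theta_1} - \xi_{\theta_2}) - (\xi_{\theta_3} - \xi_{\theta_4})| \lesssim \sum_i \ell(\theta_i)$ and $|(\xi_{\theta_1}^k - \xi_{\theta_2}^k) - (\xi_{\theta_3}^k - \xi_{\theta_4}^k)| \lesssim \delta + \sum_i \ell(\theta_i)|\xi_{\theta_i}|^{k-1}$; since $\ell(\theta_i)|\xi_{\theta_i}|^{k-1} \sim \delta^{1/2}|\xi_{\theta_i}|^{k/2}$, comparing the dyadic sizes of the $|\xi_{\theta_i}|$ allowed by these inequalities forces $|\xi_{\theta_1}| \sim |\xi_{\theta_4}|$, $|\xi_{\theta_2}| \sim |\xi_{\theta_3}|$ and equality of the two maximal sizes, reducing matters to the single-block situation above, the degenerate branch of the comparison being precisely the antipodal configuration already accounted for. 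The decisive point -- and the reason the precise calibration of $\Theta_{\delta,k}$ (linearization error exactly $\delta$, as opposed to a coarser decomposition such as the $\delta^{1/k}$-blocks of Biggs--Brandes--Hughes \cite{BiggsBrandesHughes2022}) cannot be dispensed with -- is that with this calibration the Fourier supports $\theta_1 - \theta_2$ arising from genuinely different blocks are separated enough that the passage to a sum of squares via Plancherel costs no factor $\log(1/\delta)$.
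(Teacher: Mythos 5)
Your overall framework---expand $\|F\|_{L^4}^4$ via Plancherel into a quadruple sum and reduce to an essential biorthogonality statement, then close with Cauchy--Schwarz---is the paper's framework as well. However, there are two substantive divergences, one of which creates avoidable work and one of which is a genuine gap.

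First, you work with the full curve $\xi \in [-1,1]$ and therefore have to account for the ``antipodal'' configurations that arise for odd $k$. The paper avoids these entirely by the standard first step: split $F = F_- + F_+$ according to the half-line of Fourier support, use $\|F\|_{L^4} \leq \|F_-\|_{L^4} + \|F_+\|_{L^4}$, and prove the estimate for one half only. With $\xi \geq 0$, the locus $\xi_1 + \xi_2 = 0$ is trivial and no antipodal branch appears. Your treatment of the antipodal term is salvageable but is more complicated than needed; moreover the alternative is mis-identified. In the $(\theta_1-\theta_2)\cap(\theta_3-\theta_4)\neq\emptyset$ formulation with $\xi_1-\xi_2=\xi_3-\xi_4$ and $\xi_1^k-\xi_2^k=\xi_3^k-\xi_4^k+O(\delta)$, writing $u = \xi_1-\xi_2$, $v=\xi_1+\xi_2$, $w=\xi_3+\xi_4$, the degenerate branches (for $k=3$, say) are $u=0$, or $v=w$ (giving $\theta_1\approx\theta_3$, $\theta_2\approx\theta_4$), or $v=-w$ (giving $\theta_1\approx-\theta_4$, $\theta_2\approx-\theta_3$). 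Your statement ``$\theta_2\approx-\theta_1$, $\theta_3\approx-\theta_4$'' is not a branch of the degeneracy; and the branch $\theta_1\approx\theta_3$, $\theta_2\approx\theta_4$ is not covered by the interchange $\theta_1\leftrightarrow\theta_4$, $\theta_2\leftrightarrow\theta_3$.

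Second, and more seriously, the biorthogonality claim itself is only sketched, with the cross-scale case explicitly flagged by you as ``the main obstacle'' but not actually resolved. This is the entire content of the paper's proof: it introduces a comparability lemma (if $\xi_a=\xi_b + O(\delta^{1/2}\xi_b^{-(k-2)/2})$ and $\xi_b\geq\delta^{1/k}$ then the same bound holds with $\xi_a$ on the right) and runs a careful case analysis (small-$\xi_2$ versus large-$\xi_2$, with sub-cases depending on whether $\xi_3-\xi_2$ is small or large relative to the cap length at $\xi_2$). Notably, the paper's Case A2 pairs two points near the origin cap with two points at an arbitrary scale $\gtrsim\delta^{1/k}$; this is \emph{not} a reduction to a single dyadic block, so the proposed dyadic-size comparison forcing ``equality of the two maximal sizes'' and ``reducing matters to the single-block situation'' is not an accurate description of what happens. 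The two scalar bounds you write down are the right starting point, but extracting essential biorthogonality from them requires exactly the kind of detailed comparison of cap lengths across scales that the paper supplies and your sketch does not. The anisotropic rescaling idea is fine within a single dyadic block, but as it stands it does not cover the mixed-scale quadruples, which are the whole difficulty.
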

In the proof we consider second order differences, by which already Biggs--Brandes--Hughes proved the estimate with a coarser decomposition mentioned above.
The present analysis can be regarded as extension of the argument to show the square function estimate with a decomposition into the canonical scale, which depends on the curvature.

In the next step we consider square function estimates for cones over these curves. For $k \in \N_{\geq 3}$ we define
\begin{equation*}
\mathcal{C} \gamma_k = \{ (\omega_1,\omega_2,\omega_3) \in \R^3 : |\omega_1| \leq 1, \; \omega_3 \in [1/2,1], \; \omega_2 = \omega_1^k/ \omega_3^{k-1} \}.
\end{equation*}
 Let $\Theta^{\mathcal{C}}_{\delta,k}$ denote the conical extension of the canonical covering of the curve $\{ (\omega_1,\omega_2) \in \R^2 : \omega_2 = \omega_1^k \}$. Roughly speaking, we take the covering of the $\delta$-neighbourhood into canonical rectangles linearizing the curve on the largest possible scale and then take its conical extension. The detailed definition will be provided in Section \ref{section:SquareFunctionsCones}.

\smallskip

We show the following:
\begin{theorem}
Let $\delta >0$, $k \in \N_{\geq 3}$, $F \in \mathcal{S}(\R^3)$ and $\text{supp}(\hat{F}) \subseteq \mathcal{N}_\delta(\mathcal{C} \gamma_k)$. Then the following square function estimate holds:
\begin{equation*}
\| F \|_{L^4(\R^3)} \lesssim_\varepsilon \delta^{-\varepsilon} \big\| \big( \sum_{\theta \in \Theta_{\delta,k}^{\mathcal{C}}} |F_{\theta}|^2 \big)^{\frac{1}{2}} \big\|_{L^4(\R^3)}.
\end{equation*}
\end{theorem}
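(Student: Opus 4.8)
The plan is to prove the conical square function estimate by a High--Low decomposition in the spirit of Guth--Maldague--Wang, reducing the $L^4$ bound to a wave envelope estimate for $\mathcal{C}\gamma_k$, and then to extract the latter from the two-dimensional input, namely Theorem \ref{thm:DegenerateSquareFunctionEstimateCurves} for the curve $\gamma_k$, exploiting the product-like (conical) structure of $\mathcal{C}\gamma_k$. Concretely, I would first record that, since $\mathcal{C}\gamma_k$ is a cone over the dilates of $\gamma_k$, the canonical plates $\theta \in \Theta^{\mathcal{C}}_{\delta,k}$ are, up to the $\omega_3 \in [1/2,1]$ normalization, of the form (angular sector of the curve) $\times$ ($\delta$-radial thickening), and for a fixed radial slice they reduce to the planar plates $\Theta_{\delta,k}$. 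The key quantity is the "wave envelope" $\|\,U_{\theta}F\,\|$ attached to each such plate together with the dual slabs; the High--Low machinery converts $\|F\|_{L^4}^4$ into an $\ell^2$-average over these wave envelopes at a range of intermediate scales.

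Next I would carry out the High--Low step. Write $|F|^2 = g$ and decompose $\widehat{g}$ into a "high" part (frequencies of size $\gtrsim$ the reciprocal plate-separation at scale $s$) and a "low" part, iterating over dyadic scales $\delta \le s \le 1$. As in the classical cone argument, the low part at the finest scale is controlled by the broad/narrow or by direct geometry, while each high part at scale $s$ is governed by the geometry of how the plates $\theta$ overlap inside the $s$-neighbourhood of $\mathcal{C}\gamma_k$: finitely overlapping slabs dual to the plates give an $L^2$ orthogonality gain. Summing the dyadic contributions produces the loss $\delta^{-\varepsilon}$ (more precisely a $\log(1/\delta)$ power absorbed into $\delta^{-\varepsilon}$). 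This step is where the $\varepsilon$-loss enters and cannot be removed by this method; I expect it to be essentially the standard High--Low bookkeeping, modulo checking that the nonstandard plate dimensions $\delta^{1/k}/|\xi|^{(k-2)/2} \times \delta$ and their nesting across scales still yield bounded overlap of the relevant wave-envelope slabs. That nesting is exactly the "linearize on the largest scale with error $\delta$" principle already used to define $\Theta_{\delta,k}$, so the multi-scale family $\Theta_{s,k}$ for $\delta \le s \le 1$ is compatible (coarser plates are unions of finer ones up to bounded overlap), which is the structural fact the induction needs.

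The remaining ingredient is the wave envelope estimate itself: for $F$ Fourier-supported in $\mathcal{N}_\delta(\mathcal{C}\gamma_k)$,
\begin{equation*}
\sum_{\theta} \|F_{\theta}\|_{L^4(\R^3)}^4 \lesssim_\varepsilon \delta^{-\varepsilon}\,\Big\| \big(\sum_{\theta}|F_{\theta}|^2\big)^{1/2} \Big\|_{L^4(\R^3)}^4,
\end{equation*}
or rather its localized wave-packet version. Here I would use the conical structure to integrate out the radial variable: foliating $\R^3$ by planes transverse to the cone direction, on each leaf $F$ restricts to a function Fourier-supported in (a dilate of) $\mathcal{N}_\delta(\gamma_k)$ with plates $\Theta_{\delta,k}$, so the planar Theorem \ref{thm:DegenerateSquareFunctionEstimateCurves} applies slicewise; a Fubini/Minkowski argument in the radial variable then upgrades the slicewise bound to the three-dimensional one. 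I expect the honest obstacle to be precisely making this "integrate over the cone direction" step rigorous at the level of wave packets: the planar estimate is clean on each affine slice, but the plates of $\mathcal{C}\gamma_k$ are genuinely three-dimensional tubes, not products, so one must realize the conical plates as unions of planar ones along a family of nearly-parallel slabs and control the tangential error terms (the $\delta$-thickening in the cone direction) — this is the analogue of the step in the classical cone square function estimate where the light cone's slices are parabolas, and it is where the "sharp up to endpoints" rather than "sharp" in the theorem statement originates. Modulo this reduction, assembling the pieces is routine: plug the wave envelope estimate into the High--Low output, sum the scales, and collect the $\delta^{-\varepsilon}$.
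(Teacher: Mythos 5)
Your outline diverges from the paper in a way that matters, and the divergence contains a genuine gap.

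You propose to prove the conical estimate by running a High--Low decomposition and then supplying the resulting ``wave envelope'' input via a slicing/Fubini argument: foliate $\R^3$ by affine slices, apply the planar Theorem~\ref{thm:DegenerateSquareFunctionEstimateCurves} for $\gamma_k$ on each slice, and integrate. This last step does not work, and it is not a technicality to be patched. If the cone square function estimate followed from the curve estimate by slicing, then the Guth--Wang--Zhang theorem for the circular cone would have been a trivial corollary of C\'ordoba--Fefferman, which it is not. The obstruction is structural: the affine slices of $\mathcal{N}_\delta(\mathcal{C}\gamma_k)$ transverse to the cone direction are not $\delta$-neighbourhoods of curves --- a slice of a truncated cone fills out a two-dimensional region --- and physical-space slicing does not localize the Fourier support of $F$ to a slice of its frequency support. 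Relatedly, the ``wave envelope estimate'' you write down, $\sum_\theta \|F_\theta\|_{L^4}^4 \lesssim_\varepsilon \delta^{-\varepsilon}\|(\sum_\theta |F_\theta|^2)^{1/2}\|_{L^4}^4$, holds trivially with constant one (expand the square), so it cannot be the nontrivial input the High--Low iteration needs; the genuine GWZ-type wave envelope estimate involves intermediate-scale sectors $\tau$ and the dual boxes $U_{\tau,r^2}$, and its proof is the substance of the theorem.

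What the paper actually does is different in both halves. First, it proves a generalized Kakeya/wave-envelope estimate (Proposition~\ref{prop:GeneralizedKakeya}) directly for the planks $\tilde\theta=\theta-\theta$ over $\mathcal{C}\gamma_k$ via a local High--Low analysis: at each dyadic height $h\sim\sigma^2$ it identifies $\Omega_\sigma$ with the ``ends'' of centred planks and proves finite overlap by Taylor expansion; the new phenomenon is an additional overlap between regions of significantly different base-curve curvature, which is handled by dyadic pigeonholing $|\xi_1|\sim K$ and is the source of the $\log(1/\delta)$ loss for $k\geq 3$. Second --- and this is where the route to Theorem~\ref{thm:SquareFunctionEstimateDegenerateCone} is entirely unlike your proposal --- the paper does \emph{not} reduce to the curve estimate at all. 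It pigeonholes the sector $\nu_M$ with $|\xi_1/\xi_3-\nu|\lesssim M$, applies a generalized Lorentz rescaling $\Lambda_\nu$ mapping $\nu_M$ to a full non-degenerate cone $\Gamma_2'$ with base curve $f_2(\omega)=\omega^2+\sum d_{k,\ell}c^{\ell-2}\omega^\ell$ and the $\delta$-neighbourhood to an $M^{-k}\delta$-neighbourhood, and then invokes a stability version of the Guth--Wang--Zhang non-degenerate cone square function estimate (Theorem~\ref{thm:StabilitySquareFunction}). So the degenerate cone is reduced to the non-degenerate cone by rescaling, not to the planar curve by slicing; the pigeonholing over $M$ produces a second logarithmic factor, and both logs are absorbed into $\delta^{-\varepsilon}$. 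Your intuition about multiscale compatibility of $\Theta_{s,k}$ and about the $\delta^{-\varepsilon}$ loss arising from a dyadic sum is sound, but you need the non-degenerate cone result as an off-the-shelf black box after rescaling, rather than trying to manufacture the 3D input from the 2D curve estimate.
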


This extends the square function estimate for non-degenerate cones due to Guth--Wang--Zhang \cite{GuthWangZhang2020}. We analyze Kakeya estimates for planks $\tilde{\theta} = \theta - \theta$ via a  High-Low-decomposition. After fixing a height $h \sim \sigma^2$, $\sigma \in [\delta^{\frac{1}{2}},1]$, by considering their overlap, the planks can be sorted into smaller centred planks. This corresponds to a sorting into larger sectors $\tau \supseteq \theta$. In case of the circular cone the resulting expressions are highly symmetric. In the generalized case we show overlap estimates via more perturbative arguments. It turns out that there is an additional overlap between regions, where the base curve has significant different curvature. So, we cannot recover the same Kakeya estimate like in case of the circular cone, but slicing into regions of approximately constant curvature seems necessary. This is different from the square function estimates for curves, where we can consider all the different curvatures simultaneously and still recover the estimate \eqref{eq:DegenerateSquareFunctionEstimate} on the canonical scale without loss.

\medskip

We remark that Gao \emph{et al.} \cite{GaoLiuMiaoXi2023} showed another stability result by extending the constant-coefficient analysis from \cite{GuthWangZhang2020} to variable coefficients. In this case, on small spatial scales the analysis for non-degenerate cones can be utilized. Then, by rescaling and using the self-similar structure of wave envelope estimates (see \cite{BeltranHickmanSogge2020} for a variant of this argument in the context of decoupling), the estimate can be propagated to large spatial scales.

\smallskip

For the circular cone, the square function estimate combined with maximal function estimates yielded the sharp range of local smoothing estimates with $\varepsilon$-derivatives for solutions to the wave equations (\cite{Bourgain1986,Sogge1991,MockenhauptSeegerSogge1992}). In a different direction, Maldague and Guth--Maldague recently proved sharp square function estimates for moment curves $t \mapsto (t,t^2,t^3,\ldots,t^k)$ of cubic \cite{Maldague2022} and higher order \cite{GuthMaldague2023}.
It is conceivable that arguments related to the present analysis allow for the proof of sharp square function estimates for curves of finite type $t \mapsto (t,t^{a_2},\ldots,t^{a_k})$, $1 < a_2 < a_3 \ldots < a_k$.
%It is natural to believe that square function estimates in the degenerate case are of use when averaging over curves with degeneracy.
\smallskip

The ``local" arguments to show the central Kakeya estimate extend to the complex cone:
\begin{equation*}
\C \Gamma_2 = \{ (z,z^2/h,h) \in \C \times \C \times \R : \, |z| \leq 1, h \in [1/2,1] \},
\end{equation*}
which, by identifying $\C \equiv \R^2$, can be regarded as subset of $\R^5$. Let $\Theta_{\delta, \C}$ denote the conical extension of the ``complex" rectangles canonically covering the complex base curve $z \mapsto (z,z^2)$. The precise definitions are deferred to Section \ref{section:SquareFunctionComplexCone}.

\smallskip

Extending the multiscale analysis of Guth--Wang--Zhang \cite{GuthWangZhang2020} to the complex cone, we can show the following square function estimate:
\begin{theorem}
\label{thm:SquareFunctionComplexCone}
Let $F \in \mathcal{S}(\R^5)$ with $\text{supp}(\hat{F}) \subseteq \mathcal{N}_{\delta}(\C \Gamma_2)$. Then the following estimate holds:
\begin{equation*}
\| F \|_{L^4(\R^5)} \lesssim_\varepsilon \delta^{-\varepsilon} \big\| \big( \sum_{\theta \in \Theta_{\delta,\C}} |F_{\theta}|^2 \big)^{\frac{1}{2}} \big\|_{L^4(\R^5)}.
\end{equation*}
\end{theorem}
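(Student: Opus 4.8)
The plan is to mirror the Guth--Wang--Zhang wave-envelope scheme for the light cone in $\R^3$, carried out in the $4$-real-dimensional ambient variable attached to $\C\Gamma_2$, and to reduce matters (exactly as in the real case) to a Kakeya-type estimate for the planks $\tilde\theta = \theta - \theta$ proven through a High-Low decomposition. First I would set up the multiscale geometry: for each dyadic $\sigma \in [\delta^{1/2}, 1]$ one groups the canonical complex rectangles $\theta \in \Theta_{\delta,\C}$ into larger complex sectors $\tau = \tau_\sigma$ of angular width $\sim \sigma$ in the $z$-variable, and correspondingly the dual planks $\tilde\theta$ get sorted, by their position relative to a fixed cone height $h \sim \sigma^2$, into centred sub-planks associated to $\tau$. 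Writing $F = \sum_\theta F_\theta$ and expanding $|F|^2 = \sum_{\theta,\theta'} F_\theta \overline{F_{\theta'}}$, the frequency support of each product $F_\theta\overline{F_{\theta'}}$ lies in $\tilde\theta$ (or in the appropriate translate), and the $L^4 = (L^2\text{ of }L^2)$ structure lets us run the High-Low dichotomy on $|F|^2$: at each scale $\sigma$ one compares the ``low'' part (frequencies $\lesssim$ the reciprocal of the long plank dimension at that scale) against the ``high'' part, and the low part is governed by the flat/broad term while the high part telescopes into a sum over scales of contributions controlled by the wave-envelope/Kakeya input at scale $\sigma$. Summing the resulting geometric series in $\sigma$ produces the $\delta^{-\varepsilon}$ loss.

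The analytic heart is the overlap (Kakeya) estimate for the complex planks $\tilde\theta$ at a fixed height. Here I would argue perturbatively, exactly in the spirit described in the introduction for $\mathcal{C}\gamma_k$: the complex base curve $z \mapsto (z, z^2/h, h)$ has a ``curvature form'' that, after identifying $\C \equiv \R^2$, behaves like two coupled copies of the real parabola, so the plank $\tilde\theta$ is contained in a box whose dimensions are dictated by the second-order difference of the complex quadratic — and crucially this difference is \emph{nondegenerate} (the complex parabola has no curvature degeneracy, unlike $\gamma_k$ for $k\ge 3$). Consequently one expects to recover the \emph{same} favourable plank-overlap bound as for the circular cone, with no need for the extra ``slicing into regions of constant curvature'' that the degenerate real cones forced. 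Concretely I would show: for a fixed height $h \sim \sigma^2$, the number of planks $\tilde\theta$ (in a fixed angular sector of width $\sigma$) through a given unit-scale point is $\lesssim_\varepsilon \delta^{-\varepsilon}$ times the expected count, by computing the relevant Jacobian/non-degeneracy determinant for the complexified Gauss map and checking it is bounded below — this is a $2\times 2$ (complex) linear-algebra computation, the complex analogue of the trivial transversality used for the real cone.

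The main obstacle I anticipate is precisely this transversality bookkeeping in the complexified setting: one must verify that the relevant $4\times 4$ real matrix (coming from differentiating the map $(z,h) \mapsto (z, z^2/h, h)$ and its conical dilates, viewed over $\R$) has determinant bounded away from zero on the relevant sector, uniformly in the scale $\sigma$ and the height $h \in [1/2,1]$, so that the complex planks genuinely tile the way the real ones do; the identification $\C \equiv \R^2$ introduces a Cauchy--Riemann structure that one has to track carefully to make sure no spurious degeneracy appears along the fibre directions. A secondary technical point is ensuring the High-Low pigeonholing and the passage from $\tau$-sectors back to $\theta$-rectangles lose only $\delta^{-\varepsilon}$ and not a power of $\delta$; this is standard once the Kakeya input is in hand, following \cite{GuthWangZhang2020} line by line, but the $5$-dimensional ambient space and the extra real parameter require re-checking the dimension counting at each dyadic step. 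Once the plank-overlap estimate is established, the wave-envelope estimate and hence the square function estimate follow by the now-standard High-Low induction, yielding the claimed bound with the $\varepsilon$-loss.
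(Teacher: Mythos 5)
Your overall strategy coincides with the paper's: a High--Low decomposition of the frequency support of $\sum_\theta|F_\theta|^2$ into the sets $\Omega_\sigma$ built from centred planks $\Theta(\sigma,z)$, a finite-overlap (Kakeya) estimate for these planks at each dyadic $\sigma$, and then the Guth--Wang--Zhang induction-on-scales. You also correctly identify the key structural point, namely that the complex parabola has no curvature degeneracy, so no slicing into regions of comparable curvature is needed and the Kakeya estimate holds without the logarithmic loss that appears for $\mathcal{C}\gamma_k$, $k\ge 3$. However, there are two genuine gaps. First, the induction-on-scales needs a base case, and you never supply one: the paper slices the cone in height into intervals $I_K$ of length $K^{-1}$ (paying a factor $K=R^{\delta'}$), observes that on such a slice the biorthogonality system reduces to $z_1+z_2=z_3+z_4$, $z_1^2+z_2^2=z_3^2+z_4^2+\mathcal{O}(K^{-1})$, and invokes the \emph{complexified} C\'ordoba--Fefferman square function estimate for the curve $z\mapsto(z,z^2)$ in $\C^2\equiv\R^4$ (Biggs--Brandes--Hughes) to get $S_K(1,K)\lesssim 1$. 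Without this jump start (and the accompanying Lorentz rescaling lemma for complex $d$-sectors), ``summing the geometric series in $\sigma$'' does not close the induction; the $\delta^{-\varepsilon}$ does not come from a geometric series but from iterating the two-scale inequality $O(1/\delta')$ times.

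Second, your proposed proof of the plank-overlap estimate --- bounding below a Jacobian of the ``complexified Gauss map'' --- is not what is needed and would not by itself yield the statement. The relevant fact is not a pointwise transversality determinant but a structural description of $\Omega_\sigma$: at heights $h\sim\sigma^2$ the slices $\Theta(\sigma,z)\cap\{\omega_5=h\}$ rescale to a canonical (hence finitely overlapping) covering of the base curve, while at heights $h\ll\sigma^2$ one must show that $\Omega_\sigma\cap\{\omega_5=h\}$ consists only of the \emph{ends} $\{|\ell|\sim r^{-1}\sigma\}$ of the centred planks (anything closer to $h\gamma_{2,\C}$ tangentially is already covered by $\mathbf{CP}_{\sigma/2}$), and then that a given point can lie in the ends of at most $O(1)$ planks. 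The paper does the latter by a Taylor expansion in the complex variable producing the system $h\Delta z-\ell+\ell_1=\mathcal{O}(r^{-2})$, $(h\Delta z/2+\ell_1)\Delta z=\mathcal{O}(r^{-2})$, and a sign-alternation argument for $\Re\ell$ and $\Im\ell$ separately: two far-separated planks meeting at $p$ must have tangential parameters of opposite sign, which caps the multiplicity. Your determinant computation does not see the dichotomy between $h\sim\sigma^2$ and $h\ll\sigma^2$, which is where the whole High--Low mechanism lives. (Your weaker claim of $\delta^{-\varepsilon}$-overlap instead of $O(1)$ would still suffice for the theorem, but you would still need the ends structure to prove even that.)
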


In \cite{Maldague2022} and \cite{GuthMaldague2023}, High-Low-decompositions were applied in a sophisticated induc\-tion-\-on-\-dimension scheme to show sharp square function estimates for moment curves, cones over moment curves, and generalizations referred to as $m$th order Taylor cones. Whereas the present induction scheme is far less sophisticated, to the best of the author's knowledge, Theorem \ref{thm:SquareFunctionComplexCone} is the first example of a square function estimate for and moreover the first application of the High-Low-method to a cone over a two-parameter base curve.

\medskip

\emph{Outline of the paper.} In Section \ref{section:SquareFunctionsCurves} we show square function estimates for curves of finite type. Here we extend the C\'ordoba--Fefferman biorthogonality by considering second order differences. In Section \ref{section:SquareFunctionsCones} we show the square function estimates for the conical extensions. By a more local argument compared to \cite{GuthWangZhang2020}, we show a generalized Kakeya estimate, which in the degenerate case deviates logarithmically from the case of the non-degenerate cone. By dyadic pigeonholing and rescaling, we can reduce the square function estimate for the degenerate cones to the case of non-degenerate cones. In Section \ref{section:SquareFunctionComplexCone} we show Theorem \ref{thm:SquareFunctionEstimateDegenerateCone} building on the local analysis from the previous section.

%This follows from a wave envelope estimate, which is proved by induction on scales. The backbone is a Kakeya estimate, which in the degenerate case deviates logarithmically from the case of the non-degenerate cone.

\section{Square function estimates for curves of finite type}
\label{section:SquareFunctionsCurves}

In this section we prove Theorem \ref{thm:DegenerateSquareFunctionEstimateCurves}. We begin with the motivating case $k=3$, which can be carried out very explicitly.

\begin{proof}[Proof~of~Theorem~\ref{thm:DegenerateSquareFunctionEstimateCurves},~$k=3$]
By symmetry and finite decomposition we can suppose that the Fourier support of $F$ is contained in $\mathcal{N}_\delta( \{(\xi,\xi^k) : \xi \in [0,1] \})$.

By Plancherel's theorem, we find
\begin{equation}
\label{eq:PlancherelCurve}
\begin{split}
\big\| \sum_{\theta} F_{\theta} \big\|_{L^4(\R^2)} &= \int_{\R^2} \big| \big( \sum_{\theta} F_\theta \big)^2 \big|^2 \\
&= \int_{\R^2} \big| \sum_{\theta_1} \hat{F}_{\theta_1} * \sum_{\theta_2} \hat{F}_{\theta_2} \big|^2 \\
&= \sum_{\substack{\theta_1,\theta_2, \\ \theta_3, \theta_4}} \int_{\R^2} \big( \hat{F}_{\theta_1} * \hat{F}_{\theta_2} \big) \overline{\big( \hat{F}_{\theta_3} * \hat{F}_{\theta_4} \big)}.
\end{split}
\end{equation}
So, for $\theta_1,\ldots,\theta_4 \in \Theta_{\delta,3}$ making a contribution, we have solutions to the system for $\xi_i \geq 0$, $(\xi_i,\xi_i^3) \in \theta_i$:
\begin{equation}
\label{eq:CubicSFSystem}
\left\{ \begin{array}{cl}
\xi_1 + \xi_2 &= \xi_3 + \xi_4, \\
\xi_1^3 + \xi_2^3 &= \xi_3^3 + \xi_4^3 + \mathcal{O}(\delta).
\end{array} \right.
\end{equation}
We shall establish that the above can only be satisfied in case of \emph{essential biorthogonality}: Let $D > 1$ denote a fixed dilation factor. Then essential biorthogonality refers to
\begin{equation}
\label{eq:EssentialBiorthogonality}
(\theta_1 \subseteq D \cdot \theta_3 \wedge \theta_2 \subseteq D \cdot
 \theta_4) \vee (\theta_1 \subseteq D \cdot \theta_4 \wedge \theta_2 \subseteq D \cdot \theta_3).
\end{equation}
Once \eqref{eq:EssentialBiorthogonality} is verified, the claim follows from \eqref{eq:PlancherelCurve} by applying the Cauchy-Schwarz inequality.

\smallskip

We turn to the analysis of \eqref{eq:CubicSFSystem}: Taking the third power of the first line and subtracting the second line we find
\begin{equation*}
\left\{ \begin{array}{cl}
\xi_1 + \xi_2 &= \xi_3 + \xi_4, \\
3 \xi_1 \xi_2 (\xi_1 + \xi_2) &= 3 \xi_3 \xi_4 (\xi_3+\xi_4) + \mathcal{O}(\delta).
\end{array} \right.
\end{equation*}
For $\max(\xi_i) \leq \delta^{\frac{1}{3}}$ we clearly have that $(\xi_i,\xi_i^3)$ all belong to the rectangle of size $C \delta^{\frac{1}{3}} \times C \delta$ centered at the origin, which settles \eqref{eq:EssentialBiorthogonality}.

\smallskip

So, we suppose that $\max(\xi_i) \geq \delta^{\frac{1}{3}}$ and obtain by dividing through $3(\xi_1+\xi_2) = 3(\xi_3+\xi_4)$:
\begin{equation*}
\left\{ \begin{array}{cl}
\xi_1 + \xi_2 &= \xi_3 + \xi_4, \\
\xi_1 \xi_2 &= \xi_3 \xi_4 + \mathcal{O}\big( \frac{\delta}{\max_i \xi_i} \big).
\end{array} \right.
\end{equation*}
Squaring the first line and subtracting the second line multiplied by $2$, we are led to the system
\begin{equation*}
\left\{ \begin{array}{cl}
\xi_1 + \xi_2 &= \xi_3 + \xi_4, \\
\xi_1^2 + \xi_2^2 &= \xi_3^2 + \xi_4^2 + \mathcal{O} \big( \frac{\delta}{\max_i \xi_i} \big).
\end{array} \right.
\end{equation*}

Now we are in the position to apply the classical C\'ordoba--Fefferman square function estimate, which yields a finitely overlapping decomposition into rectangles of length comparable to $\frac{\delta^{\frac{1}{2}}}{(\max_i \xi_i)^{\frac{1}{2}}}$, which for any $(\xi_i,h(\xi_i)) \in \theta_i$ is smaller than the tangential length of $\theta_i$. This completes the proof in case $k=3$.
\end{proof}

\begin{remark}[Mitigating~effect~of~transversality]
The fact that for $\max_i \xi_i \gg \min \xi_i$ the estimate improves reflects that, also in the degenerate case, transversality allows for improved bilinear estimates. Recall the following bilinear version of the C\'ordoba--Fefferman square function estimate \eqref{eq:ClassicalCF} for $\text{supp}(\hat{F}_i) \subseteq \mathcal{N}_\delta((\xi,\xi^2) : 0 \leq \xi \leq 1 )$ and $\text{dist}(\text{supp}(\hat{F}_1,\hat{F}_2)) \gtrsim 1$, which yields a decomposition of the Fourier support into squares $\theta \in \Theta_{\delta \times \delta, 2}$ of size $C \delta \times C \delta$:
\begin{equation*}
\| F_1 F_2 \|_{L^2(\R^2)} \lesssim \big\| \big( \sum_{\theta_1 \in \Theta_{\delta \times \delta,2}} |F_{\theta_1}|^2 \big)^{\frac{1}{2}} \big( \sum_{\theta_2 \in \Theta_{\delta \times \delta,2}} |F_{\theta_2} |^2 \big)^{\frac{1}{2}} \big\|_{L^2(\R^2)}.
\end{equation*}
%The connection to bilinear Strichartz estimates on frequency-dependent time intervals was recently drawn in \cite{Schippa2023Refinements}.
\end{remark}

We turn now to the proof of the general case. The following observation will be used repeatedly:
\begin{lemma}
\label{lem:Comparability}
Let $1 \geq \xi_a \geq \xi_b \geq 0$. If $\xi_b \geq \delta^{\frac{1}{p}}$, then
\begin{equation}
\label{eq:Comp1}
\xi_a = \xi_b + \mathcal{O}\big( \frac{\delta^{\frac{1}{2}}}{\xi_b^{\frac{p-2}{2}}} \big)
\end{equation}
implies
\begin{equation}
\label{eq:Comp2}
\xi_a = \xi_b + \mathcal{O}\big( \frac{\delta^{\frac{1}{2}}}{\xi_a^{\frac{p-2}{2}}} \big).
\end{equation}
\end{lemma}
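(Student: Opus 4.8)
The plan is to treat \eqref{eq:Comp1} and \eqref{eq:Comp2} as statements purely about positive reals and to exploit the monotonicity $\xi_a \geq \xi_b$, which makes the two one-sided comparisons almost interchangeable. First I would split into two regimes according to how large $\xi_a$ is relative to $\xi_b$. In the \emph{comparable regime}, where say $\xi_a \leq 2\xi_b$, the quantities $\xi_a^{(p-2)/2}$ and $\xi_b^{(p-2)/2}$ differ by at most a constant factor depending only on $p$, so \eqref{eq:Comp1} immediately yields \eqref{eq:Comp2} (and conversely). This is the trivial case and disposes of most configurations.

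The substantive case is the \emph{separated regime}, $\xi_a > 2\xi_b$. Here I would argue that \eqref{eq:Comp1} simply cannot hold, so the implication is vacuously true. Indeed, if $\xi_a > 2\xi_b$ then $\xi_a = \xi_b + (\xi_a - \xi_b)$ with $\xi_a - \xi_b > \xi_b \geq \delta^{1/p}$. On the other hand the error term in \eqref{eq:Comp1} is $C\delta^{1/2}/\xi_b^{(p-2)/2}$, and using the hypothesis $\xi_b \geq \delta^{1/p}$ we estimate
\begin{equation*}
\frac{\delta^{\frac12}}{\xi_b^{\frac{p-2}{2}}} \le \frac{\delta^{\frac12}}{(\delta^{\frac1p})^{\frac{p-2}{2}}} = \delta^{\frac12 - \frac{p-2}{2p}} = \delta^{\frac1p}.
\end{equation*}
Hence the error in \eqref{eq:Comp1} is at most a constant times $\delta^{1/p} \leq \xi_b < \xi_a - \xi_b$, contradicting \eqref{eq:Comp1} once the implied constant is accounted for. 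Thus in the separated regime the premise fails and there is nothing to prove; combined with the comparable regime this establishes the lemma.

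The only point requiring a little care — and the place I would expect the bookkeeping to bite — is the treatment of the implicit constants: the ``$\mathcal{O}$'' in \eqref{eq:Comp1} hides some constant $C$, and one must choose the threshold between the two regimes (here $\xi_a \leq 2\xi_b$ versus $\xi_a > C'\xi_b$ for a suitable $C' = C'(p,C)$) so that in the separated regime $\xi_a - \xi_b$ genuinely dominates $C\delta^{1/2}/\xi_b^{(p-2)/2}$, while in the comparable regime the ratio $(\xi_a/\xi_b)^{(p-2)/2}$ stays bounded. Since $p$ is fixed (in applications $p=k$), all these constants depend only on $p$, so this is harmless. I would also note that the argument is symmetric enough to give the reverse implication, which is why the lemma can be applied ``repeatedly'' to pass freely between error terms measured at $\xi_a$ and at $\xi_b$.
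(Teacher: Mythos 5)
Your proposal is correct and is essentially the paper's one-line argument: the decisive computation in both is $\delta^{\frac{1}{2}}/\xi_b^{\frac{p-2}{2}} \leq \delta^{\frac{1}{p}} \leq \xi_b$, which forces $\xi_a \sim \xi_b$ whenever \eqref{eq:Comp1} holds, after which the two error terms are comparable. Your two-regime dichotomy (comparable case trivial, separated case vacuous) is just a repackaging of the same observation, with the constant bookkeeping handled correctly.
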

\begin{proof}
For $\xi_b \geq \delta^{\frac{1}{p}}$ it follows $\frac{\delta^{\frac{1}{2}}}{\xi_b^{\frac{p-2}{2}}} \leq \delta^{\frac{1}{p}}$, consequently \eqref{eq:Comp1} implies $\xi_a \sim \xi_b$.
% and \eqref{eq:Comp2}.
%Then \eqref{eq:Comp2} is immediate.
\end{proof}

\begin{proof}[Proof~of~Theorem~\ref{thm:DegenerateSquareFunctionEstimateCurves}]
By symmetry and finite decomposition we can suppose that the Fourier support of $F$ is contained in $\mathcal{N}_\delta( \{(\xi,\xi^k) : \xi \in [0,1] \})$. Let $h(\xi) = \xi^k$. By the same argument as in \eqref{eq:PlancherelCurve}, we are led to the following system for $(\xi_i,h(\xi_i)) \in \theta_i$:

%By invoking Plancherel's theorem, we find
%\begin{equation}
%\label{eq:PlancherelDegenerateCurve}
%\begin{split}
%\big\| \sum_{\theta} F_{\theta} \big\|_{L^4(\R^2)} &= \int_{\R^2} \big| \big( \sum_{\theta} F_\theta \big)^2 \big|^2 \\
%&= \int \big| \sum_{\theta_1} \widehat{F_{\theta_1}} * \sum_{\theta_2} \widehat{F_{\theta_2}} \big|^2 \\
%&= \sum_{\substack{\theta_1,\theta_2, \\ \theta_3, \theta_4}} \int_{\R^2} \big( \hat{F}_{\theta_1} * \hat{F}_{\theta_2} \big) \overline{\big( \hat{F}_{\theta_3} * \hat{F}_{\theta_4} \big)} d\xi.
%\end{split}
%\end{equation}
\begin{equation*}
\left\{ \begin{array}{cl}
\xi_1 + \xi_2 &= \xi_3 + \xi_4, \\
h(\xi_1) + h(\xi_2) &= h(\xi_3) + h(\xi_4) + \mathcal{O}(\delta).
\end{array} \right.
\end{equation*}
We turn to the verification of the essential biorthogonality in the above case.

\smallskip

We suppose by symmetry that 
\begin{equation}
\label{eq:AssumptionFrequencyDistributionA}
\xi_1 \geq \xi_2 \text{ and } \xi_3 \geq \xi_4 \text{ and } \xi_1 \geq \xi_3.
\end{equation}
In the following we suppose that
\begin{equation}
\label{eq:AssumptionFrequencyDistribution}
\xi_1 \geq \xi_3 \geq \xi_2 \geq \xi_4
\end{equation}
because the other possibility $\xi_1 \geq \xi_2 \geq \xi_3 \geq \xi_4$ immediately gives $\xi_1 = \xi_2 = \xi_3 = \xi_4$ assuming \eqref{eq:AssumptionFrequencyDistributionA}.

\smallskip

We rewrite
\begin{equation*}
\begin{split}
&\quad h(\xi_1) - h(\xi_3) + h(\xi_2) - h(\xi_4) \\
&= (\xi_1^{k-1} + \xi_1^{k-2} \xi_3 + \ldots + \xi_1 \xi_3^{k-2} + \xi_3^{k-1}) (\xi_1 - \xi_3) + (\xi_2^{k-1} + \ldots + \xi_4^{k-1}) (\xi_2 - \xi_4).
\end{split}
\end{equation*}
Since $\xi_1 - \xi_3 = \xi_4 - \xi_2$, this can be rewritten as
\begin{equation}
\label{eq:AuxEstimateI}
(\xi_1 - \xi_3) (\xi_1^{k-1} + \xi_1^{k-2} \xi_3 + \ldots + \xi_3^{k-1} - \xi_2^{k-1} - \ldots - \xi_2 \xi_4^{k-2} - \xi_4^{k-1}) = \mathcal{O}(\delta).
\end{equation}

%We rewrite the second line as
%\begin{equation}
%(\xi_1-\xi_3) (\xi_1^{p-1} + \ldots + \xi_3^{p-1} - \xi_2^{p-1} - \ldots \xi_4^{p-1} ) = \\mathcal{O}(\delta).
%\end{equation}
The second factor can be estimated by \eqref{eq:AssumptionFrequencyDistribution} as
\begin{equation}
\label{eq:AuxEstimateII}
(\xi_1^{k-1} + \ldots + \xi_3^{k-1} - \xi_2^{k-1} - \ldots \xi_4^{k-1} ) \geq k \xi_3^{k-1} - k \xi_2^{k-1} = k(\xi_3 - \xi_2) (\xi_3^{k-2} + \ldots + \xi_2^{k-2}).
\end{equation}

\textbf{Case A:} $\xi_2 \leq 5 \delta^{\frac{1}{k}}$.\\
\textbf{Case A1:} $\xi_3 \leq 10 \delta^{\frac{1}{k}}$. In this case $\xi_1 \lesssim \delta^{\frac{1}{k}}$, so all points $(\xi_i,h(\xi_i))$ belong to the rectangle $C \delta^{\frac{1}{k}} \times C \delta$ at the origin.

\noindent \textbf{Case A2:} $\xi_3 \geq 10 \delta^{\frac{1}{k}}$. We find in this case
\begin{equation*}
\eqref{eq:AuxEstimateII} \gtrsim \xi_3^{k-1}.
\end{equation*}
Consequently, from \eqref{eq:AuxEstimateI} follows
\begin{equation*}
\xi_1 = \xi_3  + \mathcal{O} \big( \frac{\delta}{\xi_3^{k-1}} \big).
\end{equation*}
For $\xi_3 \geq 10 \delta^{\frac{1}{k}}$ it follows
\begin{equation*}
\frac{\delta}{\xi_3^{k-1}} \lesssim \frac{\delta^{\frac{1}{2}}}{\xi_3^{\frac{k-2}{2}}},
\end{equation*}
which shows that
\begin{equation*}
\xi_1 = \xi_3 + \mathcal{O} \big( \frac{\delta^{\frac{1}{2}}}{\xi_3^{\frac{k-2}{2}}} \big).
\end{equation*}
Since $\xi_3 \geq \delta^{\frac{1}{k}}$, we can invoke Lemma \ref{lem:Comparability} to infer
\begin{equation*}
\xi_1 = \xi_3 + \mathcal{O} \big( \frac{\delta^{\frac{1}{2}}}{\xi_1^{\frac{k-2}{2}}} \big).
\end{equation*}
Consequently, as well $(\xi_1,h(\xi_1))$, $(\xi_3,h(\xi_3))$ as $(\xi_2,h(\xi_2))$, $(\xi_4,h(\xi_4))$ belong to essentially the same $\theta_i$ (the latter two essentially the one at the origin).

\textbf{Case B:} $\xi_2 \geq 5 \delta^{\frac{1}{k}}$.\\
\textbf{Case B1:} $|\xi_3 - \xi_2| \lesssim \big( \frac{\delta}{\xi_2^{k-2}} \big)^{\frac{1}{2}}.$
Invoking again Lemma \ref{lem:Comparability} we find that $\xi_2, \xi_3$ belong to essentially the same rectangle, and moreover
\begin{equation}
\label{eq:AuxB1}
\xi_1 = \xi_4 + \mathcal{O}((\delta/ \xi_2^{k-2})^{\frac{1}{2}}).
\end{equation}
\textbf{Case B1I:} $\xi_4 \gg \delta^{\frac{1}{k}}$. In this case we have by \eqref{eq:AuxB1}
\begin{equation*}
\xi_1 = \xi_4 + \mathcal{O} \big( \frac{\delta^{\frac{1}{2}}}{\xi_4^{\frac{k-2}{2}}} \big).
\end{equation*}
Invoking Lemma \ref{lem:Comparability} we obtain that $(\xi_1,h(\xi_1))$, $(\xi_4,h(\xi_4))$ belong essentially to the same $\theta_i$.\\
\textbf{Case B1II:} $\xi_4 \lesssim \delta^{\frac{1}{k}}$. It follows from \eqref{eq:AuxB1} and $\xi_2 \geq 5 \delta^{\frac{1}{k}}$ that $\xi_1 \lesssim \delta^{\frac{1}{k}}$. Consequently, all $(\xi_i,h(\xi_i))$ belong essentially to the same $\theta_i$ at the origin.

\smallskip

\noindent \textbf{Case B2:} $ |\xi_3 - \xi_2 | \gtrsim \big( \frac{\delta}{\xi_2^{k-2}} \big)^{\frac{1}{2}}$.

Taking \eqref{eq:AuxEstimateI} and \eqref{eq:AuxEstimateII} and the assumption \eqref{eq:AssumptionFrequencyDistribution} in this case together gives
\begin{equation*}
|\xi_1 - \xi_3 | \lesssim \delta / (\big( \frac{\delta}{\xi_2^{k-2}} \big)^{\frac{1}{2}} \cdot \xi_3^{k-2} ) \lesssim \big( \delta^{\frac{1}{2}} / \xi_3^{(k-2)/2} \big).
\end{equation*}
Since $\xi_3 \geq \xi_2 \geq 5 \delta^{\frac{1}{k}}$, we can invoke Lemma \ref{lem:Comparability}, which shows indeed that $\xi_1,\xi_3$ belong to essentially the same rectangle. Consequently, we have
\begin{equation*}
| \xi_2 - \xi_4 | \lesssim \big( \delta^{\frac{1}{2}} / \xi_3^{(k-2)/2} \big) \lesssim \big( \delta^{\frac{1}{2}} / \xi_4^{(k-2)/2} \big).
\end{equation*}
This underlines that as well $\xi_2$, $\xi_4$ belong to essentially the same rectangle. The proof is complete.

\end{proof}

\section{Square function estimates for cones over degenerate curves}
\label{section:SquareFunctionsCones}

In the following we extend the square function estimate from the previous section to cones. Let $k \in \N_{\geq 2}$. Presently, we denote the base curve by $\gamma_k = \{ (\omega_1,\omega_2) \in \R^2 : \, \omega_1 \in (-1,1), \, \omega_2 = f_k(\omega_1) \}$, which generates the truncated cone:
\begin{equation*}
\mathcal{C} \gamma_k = \{ \omega_3 \cdot (\omega_1/\omega_3,\omega_2,1) \in \R^3 : \omega_2 = f_k(\omega_1/\omega_3), \quad 0 \leq |\omega_1| \leq 1, \; \frac{1}{2} \leq \omega_3 \leq 1 \}.
\end{equation*}

By finite decomposition and rigid motion, we suppose in the following that $\omega_1 \geq 0$. We require that $f_k \in C^k(0,1) \cap C([0,1])$ with $f_k(0) = 0$ and there are $C_m \geq 1$, $m=1,\ldots,k$ such that for all $\omega \in (0,1)$:
\begin{equation}
\label{eq:DerivativeBounds}
C_m^{-1} \leq f_k^{(m)}(\omega) / \omega^{k-m} \leq C_m \text{ for } 1 \leq m \leq k.
\end{equation}
%For $k=2$ we require additionally that
%\begin{equation*}
%\sup_{\omega \in (0,1)} |f_k^{(3)}(\omega)| \leq C_3.
%\end{equation*}
The estimates will be uniform in $\mathcal{C} \gamma_k$ upon imposing a bound $C_m \leq C_m^*$.

%Note that for $\omega \in \R^2$ with $\| \omega \|_4 = 1$ we have the regions:
%\begin{enumerate}
%\item $\{ \omega \in \R^2 : \| \omega \|_4 = 1, \quad |\omega_1| \ll 1, \; |\omega_2| \sim 1 \}$, 
%\item $\{ \omega \in \R^2 : \| \omega \|_4 = 1, \quad |\omega_1| \gtrsim 1, \; \; |\omega_2| \gtrsim 1 \}$, 
%\item $\{\omega \in \R^2 : \| \omega \|_4 = 1, \quad |\omega_1| \ll 1, \; |\omega_2| \sim 1 \}$.
%\end{enumerate}
%
%In the region (2) the curve is non-degenerate, for which reason the Guth--Wang--Zhang \cite{GuthWangZhang2020} result applies. By symmetry of (1) and (3) it suffices to consider region (1) in the following:

\smallskip

 Let $\delta > 0$. We parametrize the canonical covering of $\mathcal{N}_\delta(\gamma_p)$, which covers the $\delta$-neighbourhood with rectangles of maximal tangential length: An unnormalized tangential vector is given by $\mathbf{t}(\omega) = (1,f'_k(\omega))$.
An inner normal vector is given by $\mathbf{n}(\omega) = (-f'_k(\omega),1)$.

\smallskip

For $|\omega_1| \lesssim \delta^{\frac{1}{k}}$, we choose $\mathcal{O}(1)$ rectangles of length $C \delta^{\frac{1}{k}}$ into the tangential direction and length $C \delta$ into normal direction.

For $|\omega_1| \gg \delta^{\frac{1}{k}}$ we carry out a dyadic decomposition $|\omega_1| \sim K \in 2^{\Z}$ and choose points $\omega_1$ separated of length $\frac{\delta^{\frac{1}{2}}}{K^{\frac{k-2}{2}}}$. The rectangles are then chosen of length $\frac{C \delta^{\frac{1}{2}}}{K^{\frac{k-2}{2}}}$ into tangential direction and of length $C \delta$ into normal direction. Note that to cover the dyadic region of the curve with $|\xi_1| \sim K$, we require $K / (\delta^{\frac{1}{2}} / K^{\frac{k-2}{2}}) \sim K^{\frac{k}{2}} / \delta^{\frac{1}{2}}$ rectangles.

We denote a collection of centers for the rectangles obtained from this process as $\mathcal{R}(\delta) \subseteq \R^2$ and $\mathcal{R}_1(\delta) = \pi_1(\mathcal{R}(\delta))$.

Now we consider with the above parametrization the conical extension: For $\xi \in \gamma_k$\footnote{Here we abuse notation and denote the mapping and its image synonymously.}, we define the central line and normal vectors via
\begin{equation*}
\mathbf{c}(\xi) = (\xi_1,\xi_2,1), \qquad \mathbf{n}_{\gamma}(\xi)= (-f'_k(\xi_1),1,0), \quad \mathbf{n}_3 = (0,0,1).
\end{equation*}
Note that these are not normalized, but have Euclidean norm comparable to $1$. We choose as tangent vector of the base curve $(\xi_1,\xi_2,1)$, $\xi \in \gamma_k$: $\mathbf{t}(\xi) = (1,f_k'(\xi_1),0)$. Note again that $\mathbf{t}$ is unnormalized but has modulus comparable to $1$. We can cover $\mathcal{N}_\delta(\mathcal{C} \gamma_k)$ with rectangles
\begin{equation*}
\theta(\delta,\xi)= \{ a \mathbf{c}(\xi) + b \mathbf{t}(\xi) + c_1 \mathbf{n}_\gamma(\xi) + c_2 \mathbf{n}_3 : \; \frac{1}{2} \leq a \leq 1, \; |b| \leq c_0 \frac{\delta^{\frac{1}{2}}}{|\xi_1|^{\frac{k-2}{2}}}, \; |c_1|,|c_2| \leq c_0 \delta \}
\end{equation*}
for $|\xi_1| \gtrsim \delta^{\frac{1}{k}}$ and some $c_0 > 1$. For $\xi \in \gamma_k$ with $|\xi_1| \lesssim \delta^{\frac{1}{k}}$ we consider
\begin{equation*} 
\theta(\delta,\xi) = \{ a \mathbf{c}(\xi) + b \mathbf{t}(\xi) + c_1 \mathbf{n}_\gamma(\xi) + c_2 \mathbf{n}_3 : \; \frac{1}{2} \leq a \leq 1, \; |b| \leq c_0 \delta^{\frac{1}{k}}, \; |c_1|,|c_2| \leq c_0 \delta \}.
\end{equation*}
In the following $\theta^*$ denotes the polar set.

\smallskip

We define based on the canonical covering of the $\delta$-neighbourhood of $\gamma_k$:
\begin{equation*}
\Theta_\delta = \{ \theta (\delta,\xi) : \xi \in \mathcal{R}(\delta) \}.
\end{equation*}
We sort the $\theta$ according to sections of comparable curvature: For $K \in 2^{\Z}$ with $K \gtrsim \delta^{\frac{1}{k}}$ we define
\begin{equation}
\label{eq:SortingSectorsCurvature}
\Theta_\delta(K) = \{ \theta(\delta,\xi) : \xi \in \mathcal{R}(\delta), \; \xi_1 \sim K \}.
\end{equation}
Additionally, we let for the $\mathcal{O}(1)$-sectors close to the origin:
\begin{equation*}
\Theta_\delta(0) = \{ \theta(\delta,\xi) : \xi \in \mathcal{R}(\delta), \; |\xi_1| \lesssim \delta^{\frac{1}{k}} \}.
\end{equation*}

We show the following:
\begin{theorem}
\label{thm:SquareFunctionEstimateDegenerateCone}
Let $0 < \delta \ll 1$ and $F \in \mathcal{S}(\R^3)$ with $\text{supp}(\hat{F}) \subseteq \mathcal{N}_{\delta}(\mathcal{C} \gamma_k)$. Then the following estimate holds:
\begin{equation*}
\| F \|_{L^4(\R^3)} \lesssim_\varepsilon \delta^{-\varepsilon} \big\| \big( \sum_{\theta \in \Theta_\delta} |F_\theta|^2 \big)^{\frac{1}{2}} \big\|_{L^4(\R^3)}.
\end{equation*}
\end{theorem}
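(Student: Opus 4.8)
The plan is to follow the High-Low/wave-envelope route of Guth--Wang--Zhang \cite{GuthWangZhang2020}, carried out locally enough to accommodate the degenerate curvature of $\gamma_k$. The square function estimate for the cone is equivalent (after the standard duality and $L^2$-orthogonality reductions) to a Kakeya-type inequality for the dual planks $\theta^*$, or more precisely for their difference sets $\tilde\theta = \theta - \theta$, which are the relevant objects once one squares out $\|\sum_\theta F_\theta\|_{L^4}^4$ and uses Plancherel. So the first step is to record this reduction: estimating $\|F\|_{L^4}^4$ reduces to controlling the overlap function $\sum_{\theta}\mathbf{1}_{\tilde\theta + v_\theta}$ in $L^2$, where the translates are dictated by the spatial tiles. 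Here the conical structure is essential: the planks over a fixed point of the base curve, as $a$ ranges over $[1/2,1]$, sweep out a two-dimensional ``wave envelope'' attached to a sector $\tau$, exactly as in the circular-cone case.

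\textbf{Main steps.}
First I would fix a height $h\sim\sigma^2$ with $\sigma \in [\delta^{1/2},1]$ dyadic and perform the High-Low decomposition of the overlap at this scale: the planks $\tilde\theta$ whose High part concentrates at frequencies $\sim h^{-1}$ get sorted, by a pigeonholing on overlap, into centred sub-planks, which dualize to a grouping of the $\theta$'s into larger sectors $\tau \supseteq \theta$. Second, I would prove the key overlap (Kakeya) estimate at each height $\sigma$: bound the number of sectors $\tau$ (of the $\sigma$-scale) passing through a given $\delta$-ball, and bound how many $\theta \subseteq \tau$ can be essentially translates of one another. For the circular cone this is an exact symmetry computation; here I would instead use the derivative bounds \eqref{eq:DerivativeBounds} to run a perturbative argument, treating $f_k$ as a small perturbation of its second-order Taylor polynomial on each dyadic region $|\omega_1|\sim K$ of the base curve. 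Third — and this is the new feature — one must account for an extra overlap between sectors sitting over base-curve regions of genuinely different curvature (different dyadic $K$); since these cannot be absorbed as in the exactly-homogeneous case, I would slice $\Theta_\delta = \bigsqcup_K \Theta_\delta(K) \sqcup \Theta_\delta(0)$ into the curvature-constant pieces of \eqref{eq:SortingSectorsCurvature}, prove the clean Kakeya bound within each $\Theta_\delta(K)$ by rescaling it to the non-degenerate cone (the parabolic rescaling $\omega_1 \mapsto K\omega_1$ sends the $K$-piece of $\gamma_k$, up to the admissible perturbation, to a non-degenerate arc, and the conical dilation structure is preserved), apply the Guth--Wang--Zhang estimate there, and rescale back. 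Fourth, I would reassemble: sum the per-$K$ contributions, picking up at worst a logarithmic (hence $\delta^{-\varepsilon}$) loss from the number $\lesssim\log(1/\delta)$ of dyadic scales $K$ and from the number of heights $\sigma$, and combine with the High-Low geometric-series bookkeeping to close.

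\textbf{Main obstacle.}
The crux is the overlap estimate in the degenerate regime — Step two and three above. On a single dyadic piece $|\omega_1|\sim K$ the curve has curvature $\sim K^{k-2}$, uniformly up to the constants $C_m$, so after rescaling one is genuinely in the non-degenerate situation and GWZ applies; the subtlety is (i) controlling the perturbation error so that the rescaled arc is a legitimate finite-type/non-degenerate curve with the canonical covering mapping correctly onto $\Theta_\delta(K)$ under the rescaling, and (ii) showing that cross-terms between different $K$'s in the High-Low sorting really are harmless, i.e.\ that two planks over regions with $K \not\sim K'$ contribute overlap only through the coarse (trivial) bound and do not spoil the $L^2$ estimate beyond the logarithmic loss. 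Once the single-scale Kakeya inequality is in place with its $\log$-loss, the rest is the by-now-standard High-Low bookkeeping from \cite{GuthWangZhang2020}, adapted mutatis mutandis, together with dyadic pigeonholing in $K$ to reduce to a single curvature scale as announced in the outline.
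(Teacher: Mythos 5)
Your core mechanism is the same as the paper's: dyadically pigeonhole the cone into constant-curvature pieces $|\omega_1/\omega_3|\sim K$ (accepting a $\log(1/\delta)$ loss), apply a generalized Lorentz/parabolic rescaling sending each piece to a non-degenerate cone with an admissibly perturbed base curve, invoke the Guth--Wang--Zhang square function estimate in its stable form there, and rescale back. This is precisely the paper's ``slicing argument'': the rescaled base curve is $f_2(\omega_1)=\omega_1^2+\sum_{\ell\ge 3}d_{k,\ell}c^{\ell-2}\omega_1^\ell$ with $c\ll1$, the $\delta$-neighbourhood becomes a $M^{-k}\delta$-neighbourhood, and the stability input is Theorem \ref{thm:StabilitySquareFunction}. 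Where you diverge is in insisting on running the High-Low/Kakeya machinery for the degenerate cone itself and then ``closing'' with the GWZ geometric-series bookkeeping. The paper deliberately does \emph{not} do this for the present theorem: the generalized Kakeya estimate (Proposition \ref{prop:GeneralizedKakeya}) is proved, but only with a logarithmic loss caused by genuine cross-scale overlap (Remark \ref{rem:DifferentScales}), and---more importantly---the GWZ induction-on-scales does not self-replicate for a degenerate cone, because Lorentz rescaling of a sector away from the degeneracy produces a \emph{non}-degenerate cone rather than a copy of $\mathcal{C}\gamma_k$. So the order of operations matters: one must pigeonhole and rescale \emph{first}, and then cite the full non-degenerate square function estimate wholesale, rather than only its Kakeya input followed by an induction for $\mathcal{C}\gamma_k$. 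Your step three already contains the correct reduction; steps one, two and four, as applied to the degenerate cone globally, are redundant for this theorem (they are what the paper develops separately in Proposition \ref{prop:GeneralizedKakeya} and reuses for the complex cone). With that reorganization your argument matches the paper's proof.
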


For the proof we shall extend the High-Low-method pioneered in \cite{GuthWangZhang2020}. This establishes a Kakeya-type estimate for the overlap of the planks $\tilde{\theta} = \theta(\xi) - \theta(\xi)$, which is the Fourier support of $|f_\theta|^2$.

\subsection{A generalized Kakeya estimate}

We prove the following estimate, which generalizes \cite[Lemma~1.4]{GuthWangZhang2020}:
\begin{proposition}
\label{prop:GeneralizedKakeya}
Let $r \gg 1$, $\delta = r^{-2}$, and $k \geq 2$. Suppose that $\text{supp}(\hat{f}) \subseteq \mathcal{N}_\delta(\mathcal{C} \gamma_k)$. Then the following estimate holds:
\begin{equation}
\label{eq:GeneralizedKakeya}
\int_{\R^3} \big( \sum_{\theta \in \Theta_\delta} |f_{\theta}|^2 \big)^2 \lesssim \log(r^{-1}) \sum_{r^{-1} \leq s \leq 1} \sum_{\tau \in \Theta_{s^2}} \sum_{U \parallel U_{\tau,r^2}} |U|^{-1} \| S_U f \|^4_{L^2(U)}
\end{equation}
with $U_{\tau,r^{-2}} = \text{conv}( \bigcup_{\substack{\theta \in \Theta_{r^{-2}}, \\ \theta \subseteq \tau}} \theta^* )$.
\end{proposition}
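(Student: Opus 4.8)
\textbf{Proof proposal for Proposition \ref{prop:GeneralizedKakeya}.}

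The plan is to mimic the High-Low argument of Guth--Wang--Zhang, but track carefully how the degenerate curvature alters the bookkeeping. Write $g = \sum_{\theta \in \Theta_\delta} |f_\theta|^2$, so that $\hat g$ is supported in $\bigcup_\theta \tilde\theta$, where $\tilde\theta = \theta - \theta$ is a plank centered at the origin (an $O(\delta^{1/2}/|\xi_1|^{(k-2)/2}) \times O(\delta) \times O(\delta)$ slab in the frame $\mathbf t(\xi), \mathbf n_\gamma(\xi), \mathbf n_3$). First I would set up the multiscale High-Low decomposition adapted to the cone: for each dyadic $s \in [r^{-1}, 1]$ let $\tau$ range over the ``$s^2$-sectors'' $\Theta_{s^2}$ (so at scale $s = r^{-1}$ the $\tau$'s are the $\theta$'s themselves, and as $s$ grows the sectors coarsen), and let $\hat g_{\le s}$ denote the Fourier restriction of $g$ to the union of the $O(s^{-1})$-dilated duals of the sector planks, with $g_s = g_{\le s} - g_{\le s/2}$ the annular piece. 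Since $g = \sum_s g_s + g_{\le r^{-1}}$ telescopes and $g_{\le r^{-1}}$ is the ``lowest'' piece, $\|g\|_{L^2}^2 \lesssim \log r \cdot \max_s \|g_s\|_{L^2}^2 + \|g_{\le r^{-1}}\|^2$, and the $\log r$ factor in \eqref{eq:GeneralizedKakeya} is exactly this pigeonholing over the $O(\log r)$ scales. The key geometric input is: if $\tilde\theta_1, \tilde\theta_2$ both contribute to the frequency annulus at scale $s$ (i.e. their sum meets the support of $\hat g_s$), then $\theta_1, \theta_2$ lie in a common $s^2$-sector $\tau$; dually $\theta_i^* \subseteq U_{\tau, r^{-2}}$. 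This is where the degeneracy bites — for the circular cone two planks whose difference has frequency $\sim s$ in the base direction are forced into an $O(s)$-arc, but here I must prove the analogous nesting using the derivative bounds \eqref{eq:DerivativeBounds}, and I expect an \emph{extra} family of overlaps coming from pairs of planks sitting over base points where $f_k$ has genuinely different curvature. Handling that extra overlap is the main obstacle; the way around it is the sorting $\Theta_\delta = \bigsqcup_{K} \Theta_\delta(K)$ into pieces of comparable curvature $|\xi_1| \sim K$, proving the clean nesting within each $K$-slice, and absorbing the cross terms into the $\log r$ loss (or, if necessary, a further $\log$ which is still acceptable since the statement only claims a single $\log(r^{-1})$ — so I would aim to show the cross-curvature interaction is in fact controlled by the diagonal terms via Cauchy--Schwarz on the $O(\log r)$ curvature scales).

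Granting the nesting, the second step is the local $L^4$/$L^2$ conversion on spatial scale $r^2 = \delta^{-1}$. On the frequency annulus of scale $s$, $\widehat{g_s}$ is supported in an $O(s\delta^{1/2}\cdot\text{(widths)})$ neighbourhood of the union of the $\tau$-sectors' conormal lines; covering $\R^3$ by dual tubes/planks $U \parallel U_{\tau, r^{-2}}$ and using that on each such $U$ the function $g_s$ restricted to the frequencies of $\tau$ is essentially constant at the relevant scale, I would write
\[
\|g_s\|_{L^2(\R^3)}^2 \lesssim \sum_{\tau \in \Theta_{s^2}} \sum_{U \parallel U_{\tau,r^2}} \int_U |g_{s,\tau}|^2,
\]
and then estimate $\int_U |g_{s,\tau}|^2 \le |U| \sup_U |g_{s,\tau}|^2$; since $g_{s,\tau} = \sum_{\theta \subseteq \tau} |f_\theta|^2$ is (up to Fourier localization) a nonnegative sum, $\sup_U |g_{s,\tau}| \lesssim |U|^{-1} \int_{wU} \sum_{\theta \subseteq \tau} |f_\theta|^2 \lesssim |U|^{-1} \|S_U f\|_{L^2(U)}^2$ after replacing $\sum_{\theta\subseteq\tau}|f_\theta|^2$ by the square function associated to the wave envelope $U$, i.e. $S_U f = \big(\sum_{\theta \subseteq \tau}|f_\theta|^2\big)^{1/2}$ smoothed to $U$. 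Combining, $\int_U |g_{s,\tau}|^2 \lesssim |U|^{-1}\|S_U f\|_{L^2(U)}^4$, which is exactly the summand on the right of \eqref{eq:GeneralizedKakeya}; summing over $\tau$, over $U \parallel U_{\tau,r^2}$, over the $O(\log r)$ scales $s$, and handling $g_{\le r^{-1}}$ as the trivial $s = r^{-1}$ term gives the claim.

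The main obstacle, as indicated, is the geometry of plank sums for the degenerate cone: establishing that frequency separation at scale $s$ forces the corresponding planks into a common $\tau \in \Theta_{s^2}$, and quantifying the unavoidable extra overlap between regions of different curvature. I would attack this exactly as the curve case was attacked in Section \ref{section:SquareFunctionsCurves} — by writing out second-order differences $f_k(\omega+t) + f_k(\omega-t) - 2f_k(\omega)$, using \eqref{eq:DerivativeBounds} to get two-sided bounds $\sim t^2 \omega^{k-2}$ on this, and comparing with the allowed $O(\delta)$ error — now carried one order further and fibered over the cone direction $\omega_3 \in [1/2,1]$ (on which everything is comparable to the $\omega_3 = 1$ slice, so the cone variable only contributes harmless constants). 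The ``linearize at the largest scale with error $\delta$'' principle from the introduction is what pins down $U_{\tau,r^{-2}} = \operatorname{conv}(\bigcup_{\theta \subseteq \tau}\theta^*)$ as the correct wave envelope, and Lemma \ref{lem:Comparability} is the tool that lets me freely swap $\xi_a^{(k-2)/2}$ for $\xi_b^{(k-2)/2}$ in all the error terms once $\xi_a \sim \xi_b$ has been forced.
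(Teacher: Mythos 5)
Your high-level plan (High-Low decomposition into scale annuli $\Omega_\sigma$, sorting into curvature-homogeneous slices $|\xi_1| \sim K$, then the locally-constant step on wave envelopes $U \parallel U_{\tau,r^2}$) matches the paper's architecture, and your $L^4 \to L^2$ conversion at the end is correct. However, there is a real confusion about where the logarithmic loss enters, and it matters for closing the argument.

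You attribute the $\log(r^{-1})$ factor to ``pigeonholing over the $O(\log r)$ scales $s$,'' writing $\|g\|^2 \lesssim \log r \cdot \max_s \|g_s\|^2$. This pigeonholing is unnecessary and not where the log comes from: the annuli $\Omega_\sigma$ are Fourier-disjoint by construction, so $\int_{\Omega_{\le 1}} |\hat g|^2 = \sum_\sigma \int_{\Omega_\sigma} |\hat g|^2$ is an \emph{exact} Plancherel identity with no loss, and the sum over $\sigma$ simply turns into the sum over $s$ on the right-hand side of \eqref{eq:GeneralizedKakeya}. The single available $\log(r^{-1})$ must instead be spent on the curvature pigeonholing $\Theta_\delta = \bigsqcup_K \Theta_\delta(K)$ --- this is the genuinely new cost relative to Guth--Wang--Zhang, and it is incurred \emph{once, up front}, before the High-Low decomposition begins. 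Having spent it, all subsequent overlap estimates must hold with $O(1)$ constants, uniformly in $K$. When you then contemplate ``absorbing the cross-curvature interaction via Cauchy--Schwarz on the $O(\log r)$ curvature scales'' or ``a further $\log$,'' you would land at $\log^2 r$, which is not what the statement claims. The correct logic is: pigeonhole to a single $K$ first (cost: one log), then restrict \emph{all} centred planks $\mathbf{CP}'_\sigma$ to base points with $|\xi_1| \sim K$ so that cross-curvature interactions simply never appear, and also truncate the $\sigma$-range at $\sigma_0 \sim r^{-1}/K^{(k-2)/2}$ below which there are only $O(1)$ planks over that arc.

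The second gap is that the finite-overlap claim --- that for $\omega$ in the annulus $\Omega'_\sigma$ only $O(1)$ many centred planks $\Theta(\sigma,\xi)$ with $|\xi_1|\sim K$ contain a dilate of $\omega$ --- is exactly the technical heart of the proposition, and you correctly flag it as the main obstacle but do not supply it. Your plan to use second-order differences $f_k(\omega+t)+f_k(\omega-t)-2f_k(\omega) \sim t^2\omega^{k-2}$ is the right flavour, but the actual statement you need is fibered over the cone height $h$: for $h\ll\sigma^2$ every point of $\Omega'_\sigma$ at height $h$ lies in a dilated ``end'' of some $\Theta(\sigma,\xi)\cap\{\omega_3=h\}$, and distinct right ends (resp.\ left ends) are essentially disjoint, while a right end of $\tilde\theta(\eta)$ cannot land in a left end of $\Theta(\sigma,\xi)$ for $|\xi_1|\sim|\eta_1|\sim K$. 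That last clause is precisely where the $|\xi_1|\sim K$ restriction is used, and without it one gets exactly the unfavourable right-end/left-end interaction between curvature scales that you worry about (cf.\ Remark \ref{rem:DifferentScales}). Proving these statements requires Taylor expansions of $\gamma_k$ about the displaced base point with Lagrange remainders (Lemmas \ref{lem:ComparabilityTheta}, \ref{lem:TwoEndsRepresentation}), and comparability lemmas such as Lemma \ref{lem:Comparability} to swap $|\xi_1|^{(k-2)/2}$ for $|\eta_1|^{(k-2)/2}$. Your sketch gestures at this machinery but does not carry it out, so the proof as written is incomplete at its central step.
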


%For comparability with the non-degenerate case, we analyze in detail the degenerate cone
%\begin{equation*}
%\{ |\omega_1|^p + |\omega_2|^p = |\omega_3|^p : \; |\omega_1| \ll 1, \; \omega_3 \in [1/2,1] \}.
%\end{equation*}
%We shall see that the arguments hinge on derivative bounds for the parametrization of the base curve
%\begin{equation*}
%\gamma_p(\xi_1) = (\xi_1,f_p(\xi_1)), 
%\end{equation*}
%which are given by
%\begin{equation*}
%|f_p^{(k)}(\xi_1)| \sim c_k |\xi_1|^{p-k}, \quad 2 \leq k \leq p, \text{ and } f'_p(\xi_1) \geq 0.
%\end{equation*}
%We can impose the final monotonicity assumption by symmetry.

\smallskip

\begin{remark}
For $k=2$, the analysis yields that the estimate holds without logarithmic loss; see Remark \ref{rem:DifferentScales} (2).
\end{remark}
%Before we turn to the proof in earnest, let us introduce further notations.
%For $\xi \in \gamma_p$, we define the central and normal line via
%\begin{equation*}
%\mathbf{c}(\xi) = (\xi_1,\xi_2,1), \qquad \mathbf{n}(\xi)= (\xi_1^3,\xi_2^3,1).
%\end{equation*}
%Note that these are not normalized, but have Euclidean norm comparable to $1$. We choose as tangent vector of the curve $(\xi_1,\xi_2,1)$, $\| \xi \|_4 = 1$: $\mathbf{t}(\xi) = (\xi_2^3,-\xi_1^3,0)$. Note again that $\mathbf{t}$ is unnormalized but has modulus comparable to $1$. We can cover $\mathcal{N}_\delta(\Gamma_4)$ with rectangles
%\begin{equation*}
%\theta(\xi)= \{ \omega \in \R^3 : \, \frac{1}{2} \leq \mathbf{c}(\xi) \cdot \omega \leq 1, \quad |\mathbf{t}(\xi) \cdot \omega| \leq \frac{C \delta^{\frac{1}{2}}}{|\xi_1|}, \quad |\mathbf{n}(\xi) \cdot \omega| \leq C r^{-2} \}.
%\end{equation*}
%for $|\xi_1| \gtrsim \delta^{\frac{1}{4}}$. For $\xi \in \gamma_4$ with $|\xi_1| \lesssim \delta^{\frac{1}{4}}$ we consider
%\begin{equation*}
%\theta(\xi) = \{ \omega \in \R^3 : \frac{1}{2} \leq \mathbf{c}(\xi) \cdot \omega \leq 1 , \quad |\mathbf{t}(\xi) \cdot \omega| \leq C \delta^{\frac{1}{4}}, \quad |\mathbf{n}(\xi) \cdot \omega| \leq C \delta \}.
%\end{equation*}
%$\theta^*$ denotes the polar set.
%
%We define based on the canonical covering of the $\delta$-neighbourhood of $\gamma_4$:
%\begin{equation*}
%\Theta_\delta = \{ \theta (\delta,\xi) : \xi \in \mathcal{R}(\delta) \}.
%\end{equation*}

Like in the case of the circular cone, we shall obtain incidence estimates for dyadic heights $2^{\Z} \ni h = \sigma^2$, $r^{-1} \leq \sigma \leq 1$. In the following, to simplify notations, we let $\delta = r^{-2}$. We define centered planks at scale $\sigma$: these are taylored such that at height $h = \sigma^2$ the centered planks cover canonically the $r^{-2}$-neighbourhood of the degenerate curve. To this end, we rescale the small height to unit height: This inflates the $r^{-2}$-neighbourhood to the $r^{-2} \sigma^{-2}$-neighbourhood of the degenerate cone at unit length. Now we choose $\xi \in \mathcal{R}(r^{-2} \sigma^{-2})$, i.e., the spacing associated with the canonical covering of the neighbourhood of size $r^{-2} \sigma^{-2}$.

But the length of the rectangles has to be rescaled again by $\sigma^2$. This leads to the following: For $\xi \in \mathcal{R}(r^{-2} \sigma^{-2})$ with $|\xi_1| \lesssim (r \sigma)^{-\frac{2}{k}}$ consider
\begin{equation*}
\begin{split}
\Theta(\sigma,\xi) &= 
\{ a \mathbf{c}(\xi) + b \mathbf{t}(\xi) + c_1 \mathbf{n}_\gamma(\xi) + c_2 \mathbf{n}_3 : \; |a| \leq \sigma^2, \; |b| \leq C (r \sigma)^{-\frac{2}{k}} \sigma^2, \\
&\quad |c_1|,|c_2| \leq C r^{-2} \}.
\end{split}
\end{equation*}
For $\xi \in \mathcal{R}(r^{-2} \sigma^{-2})$ with $|\xi_1| \gg (r \sigma)^{-\frac{2}{k}}$, $|\xi_1| \sim K \in 2^{\Z}$, $K \ll 1$ consider
\begin{equation*}
\Theta(\sigma,\xi) = \{ 
a \mathbf{c}(\xi) + b \mathbf{t}(\xi) + c_1 \mathbf{n}_\gamma(\xi) + c_2 \mathbf{n}_3 : \; |a| \leq \sigma^2, \; |b| \leq C \frac{r^{-1} \sigma}{K^{\frac{k-2}{2}}}, \; |c_1|,|c_2| \leq C r^{-2} \}.
\end{equation*}
$C$ will be chosen $C=C(c_0,C_m)$ with $C_m$ given by \eqref{eq:DerivativeBounds}.

These centered planks at scale $\sigma$ form a mild dilation of the canonical covering of the $r^{-2}$-neighbourhood of $h \gamma_4$. We denote the collection by
\begin{equation*}
\mathbf{CP}_{\sigma} = \{ \Theta(\sigma,\xi) : \xi \in \mathcal{R}(r^{-2} \sigma^{-2}) \}.
\end{equation*}
For a set $A$ we denote the collection of its subelements by
\begin{equation*}
\bigcup A = \{ x \; | \; \exists y \in A:  x \in y  \}.
\end{equation*}

\smallskip

In the High-Low-decomposition we will consider differences
\begin{equation}
\label{eq:DifferenceDecomposition}
\bigcup \mathbf{CP}_{\sigma} \backslash \bigcup \mathbf{CP}_{\sigma/2}.
\end{equation}
The union will cover the Fourier support of $ \sum_{\theta \in \Theta_\delta} |f_{\theta}|^2 $ and on each union we shall obtain a certain almost orthogonality decomposition into coarser planks.

\smallskip

Define for $r^{-1} \leq \sigma \leq 1$, $\sigma \in 2^{\Z}$:
\begin{equation*}
\Omega_{\leq \sigma} = \bigcup \mathbf{CP}_{\sigma} \text{ and } \Omega_{\sigma} = \Omega_{\leq \sigma} \backslash \Omega_{\leq \sigma/2}.
\end{equation*}
Note that $\bigcup \tilde{\theta} \subseteq \bigcup_{\xi \in \mathcal{R}(r^{-2})} \Theta(1,\xi)$ (the $\tilde{\theta}$ correspond to $\Theta(1,\xi)$ up to a mild dilation).

Let $\xi,\xi' \in \gamma_k$. We associate $\theta(\xi')$ to $\Theta(\sigma,\xi)$ as follows:
For $|\xi_1'| \lesssim (r \sigma)^{-\frac{2}{k}}$ choose $\xi$ with $|\xi_1 - \xi_1'| \leq (r \sigma)^{-\frac{2}{k}}$. For $|\xi_1'| \sim K \gg (r \sigma)^{-\frac{2}{k}}$, choose $\xi$ with $|\xi_1-\xi_1'| \leq \frac{r^{-1} \sigma^{-1}}{K^{\frac{k-2}{2}}}$. We write $\theta(\xi') \in \Theta(\sigma,\xi)$.

\begin{proof}[Proof~of~Proposition~\ref{prop:GeneralizedKakeya}]

With the above sorting of sectors \eqref{eq:SortingSectorsCurvature}, we write
\begin{equation*}
\sum_{\theta \in \Theta_\delta} |f_\theta|^2 \lesssim \sum_{K \in [\delta^{\frac{1}{k}},1] \cup \{ 0 \} } \sum_{\theta \in \Theta_\delta(K)} |f_\theta|^2.
\end{equation*}
By dyadic pigeonholing we obtain for some $K \in [\delta^{\frac{1}{k}},1] \cup \{ 0 \}$:
\begin{equation}
\label{eq:CurvaturePigeonholing}
\sum_{\theta \in \Theta_\delta} |f_\theta|^2 \lesssim \log(r^{-1}) \sum_{\theta \in \Theta_\delta(K)} |f_\theta|^2.
\end{equation}
In the case $K \lesssim r^{-\frac{2}{k}}$ there are only $\mathcal{O}(1)$-sectors and the conclusion of the argument follows from a simple variant of the arguments below. In the following we suppose that $K \gg r^{-\frac{2}{k}}$.

Let $g(x) = \sum_{\theta \in \Theta_\delta(K)} \big( |f_{\theta}|^2 \big)(x)$. Following the pigeonholing, we can refine the definition of $\mathbf{CP}_{\sigma}$. Let
\begin{equation*}
\mathbf{CP}'_{\sigma} = \{ \Theta(\sigma,\xi) : \, \xi_1 \sim K \}
\end{equation*}
and correspondingly, for $\sigma \sim \frac{r^{-1}}{K^{\frac{k-2}{2}}}$ we have that $\# \{ \Theta(\sigma,\xi) : \Theta(\sigma,\xi) \in \mathbf{CP}'_{\sigma} \} \lesssim 1$. This suggests to carry out the decomposition into $\Omega_\sigma$ up to a dyadic scale $\sigma_0 \gg \frac{r^{-1}}{K^{\frac{k-2}{2}}}$. We define for
$\sigma_0 \leq \sigma \leq 1$, $\sigma \in 2^{\Z}$:
\begin{equation*}
\Omega'_{\leq \sigma} = \bigcup \mathbf{CP}_{\sigma}' \text{ and } \Omega'_{\sigma} = \Omega'_{\leq \sigma} \backslash \Omega'_{\leq \sigma/2}.
\end{equation*}

We shall prove a High-Low-estimate after invoking Plancherel's theorem:
\begin{equation*}
\int_{\R^3} |g(x)|^2 = \int_{\R^3} |\hat{g}(\omega)|^2 = \int_{\R^3} \big| \sum_{\theta \in \Theta_\delta(K)} \big( |f_\theta|^2 \big) \widehat{\,} (\omega) \big|^2 = \int_{\Omega_{\leq 1}} \big| \sum_{\theta \in \Theta_\delta(K)} \big( |f_\theta|^2 \big) \widehat{\,} (\omega) \big|^2.
\end{equation*}
The pigeonholing carried out in \eqref{eq:CurvaturePigeonholing} will be implicit in the following to lighten the notation. Moreover, for $\sigma \sim \frac{r^{-1}}{K^{\frac{k-2}{2}}}$ we have that $\# \{ \Theta(\sigma,\xi) : \Theta(\sigma,\xi) \in \mathbf{CP}_{\sigma} \} \lesssim 1$. This suggests to carry out the decomposition \eqref{eq:DifferenceDecomposition} up to a dyadic scale $\sigma_0 \gg \frac{r^{-1}}{K^{\frac{k-2}{2}}}$. We abuse notation and denote $\Omega'_{\leq \sigma_0}$ by $\Omega'_\sigma$ again.

\smallskip

We consider the dyadic partition:
\begin{equation}
\label{eq:DyadicPartitionHighLow}
\int_{\Omega'_{\leq 1}} \big| \big( \sum_{\theta \in \Theta_\delta(K)} |f_{\theta}|^2(\omega) \big|^2 = \sum_{\sigma_0 \leq \sigma \leq 1 } \underbrace{\int_{\Omega'_\sigma} \big| \sum_{\theta \in \Theta_\delta(K)} \big( |f_\theta|^2 \big) \widehat{\,} (\omega) \big|^2 d\omega}_{A(\sigma)}.
\end{equation}
We sort the sum over $\theta$ into $\Theta(\sigma,\xi)$:
\begin{equation*}
A(\sigma)= \int_{\Omega'_{\sigma}} \big| \sum_{\Theta(\sigma,\xi) \in \mathbf{CP}'_\sigma} \sum_{\substack{\theta \in \Theta(\sigma,\xi), \\ \theta \in \Theta_{\delta}(K)}} \big( |f_{\theta}|^2 \big) \widehat{\,} (\omega) \big|^2.
\end{equation*}
We shall see, extending the crucial observation from the non-degenerate case, that for $\omega \in \Omega_\sigma'$, the overlap of $\Theta(\sigma,\xi)$ is finite. This allows us to apply the Cauchy-Schwarz inequality without significant loss:
\begin{equation}
\label{eq:FiniteOverlap}
\int_{\Omega_\sigma'} \big| \sum_{\Theta(\sigma,\xi) \in \mathbf{CP}'_\sigma} \sum_{\theta \in \Theta(\sigma,\xi)} \big( |f_\theta|^2 \big) \widehat{\,} (\omega) \big|^2 \lesssim \int_{\Omega'_\sigma} \sum_{\Theta(\sigma,\xi) \in \mathbf{CP}'_\sigma} \big| \sum_{\theta \in \Theta(\sigma,\xi)} \big( |f_{\theta}|^2 \big) \widehat{\,} (\omega) \big|^2.
\end{equation}

For $\sigma \sim \sigma_0$, we have $|\mathbf{CP}'_{\sigma}| \sim 1$ by $|\xi| \sim K$. Consequently, \eqref{eq:FiniteOverlap} is immediate for $\sigma \sim \sigma_0$. In the following we suppose that $\sigma \gg \sigma_0$.

\medskip

It will be crucial to understand the set $\Omega'_{\sigma} \cap \{\omega_3 = h \}$. Note that for $\theta=\theta(\xi)$ with $\xi \in \mathcal{R}(r^{-2})$ we obtain for $\tilde{\theta}(\xi) \cap \{ \omega_3 = h \}$ a rectangle centered at $h \gamma_4(\xi_1)$ with length $\frac{r^{-1}}{K^{\frac{k-2}{2}}}$ into the tangential direction and $r^{-2}$ into the normal direction:
\begin{equation*}
\begin{split}
&\quad \pi_{12} (\text{supp}(\mathcal{F} (|f_\theta|^2)) \cap \{ \omega_3 = h \}) \\
&= \{ h \gamma_4(\xi_1) + \ell \dot{\gamma}_4(\xi_1) + c r^{-2} \mathbf{n}_{\gamma}(\xi_1) : |\ell| \lesssim \frac{r^{-1}}{K^{\frac{k-2}{2}}}, \; |c| \leq 2 \}.
\end{split}
\end{equation*}
By $\pi_{12}: \R^3 \to \R^2$ we denote the projection to the first two coordinates, which will often be implicit in the following when intersecting with $\{ \omega_3 = h\}$. In the following to unify notation in the above display, we let $\mathbf{n}_{\gamma}(\xi_1) = \mathbf{n}_\gamma(\xi)$. By symmetry $\omega_3 \to - \omega_3$ we restrict in the following to non-negative $h$.

\medskip

 To show \eqref{eq:FiniteOverlap}, we begin with the following lemma, which states that for $h \ll \sigma^2$, if a point lies in the tangential neighbourhood of $h \gamma_k(\eta_1)$ with a tangential distance much smaller than $\ell(\Theta(\sigma,\eta))$, then it lies in $\mathbf{CP}_{\sigma/2}$.
 
For this we do not have to require $|\eta_1| \sim K$ or a minimum size condition $\sigma \gg \sigma_0$.
\begin{lemma}
\label{lem:ComparabilityTheta}
\begin{enumerate}
Let $\sigma \in 2^{\Z} \cap [r^{-1},1]$, $0 \leq h \leq \sigma^2$, and $|\eta_1| \ll 1$.
\item Let 
\begin{equation}
\label{eq:RepPoint(A)}
p = h \gamma_k(\eta_1) + \ell \dot{\gamma}_k(\eta_1) + c r^{-2} \mathbf{n}_{\gamma}(\eta_1)
\end{equation}
 with $|\eta_1| \ll (r \sigma)^{-\frac{2}{k}}$, $|\ell| \ll (r \sigma)^{-\frac{2}{k}} \sigma^2$, and $|c| \leq c_0$. Then we have
\begin{equation}
\label{eq:RepPoint(B)}
p = h \gamma_k(0) + \ell_1 \dot{\gamma}_k(0) + C r^{-2} \mathbf{n}_{\gamma}(0)
\end{equation}
with $|C| \leq C^*(c_0,C_m)$ with $C_m$ given in \eqref{eq:DerivativeBounds} and $|\ell_1| \ll (r \sigma)^{-\frac{2}{k}} \sigma^2$.
Furthermore, for $p$ defined by \eqref{eq:RepPoint(A)} with $|\eta_1| \ll (r \sigma)^{-\frac{2}{k}}$, $|\ell| \sim (r \sigma)^{-\frac{2}{k}} \sigma^2$, we obtain \eqref{eq:RepPoint(B)} with $|\ell_1| \sim (r \sigma)^{-\frac{2}{k}} \sigma^2$.
\item
Let $p$ be defined by \eqref{eq:RepPoint(A)} with $|\eta_1| \gtrsim (r \sigma)^{-\frac{2}{k}}	$, $|\ell| \ll \frac{r^{-1} \sigma}{|\eta_1|^{\frac{k-2}{2}}}$, and $|c| \leq c_0$. Then there is $\xi \in \mathcal{R}(r^{-2} (\sigma/2)^{-2})$ with $|\xi_1 - \eta_1| \lesssim \frac{r^{-1} \sigma^{-1}}{|\eta_1|^{\frac{k-2}{2}}}$ and
\begin{equation}
\label{eq:Representation(1)}
p = h \gamma_4(\xi_1) + \ell_1 \dot{\gamma}_4(\xi_1) + C r^{-2} \mathbf{n}_{\gamma}(\xi_1)
\end{equation}
with $|C| \leq C^*(c_0,C_m)$ and $|\ell_1| \ll r^{-1} \sigma/|\xi_1|^{\frac{k-2}{2}}$, which implies for $h \ll \sigma^2$:
\begin{equation*}
p \in \Theta(\sigma/2,\xi) \cap \{ \omega_3 = h \}.
\end{equation*}
Furthermore, for $p$ defined by \eqref{eq:RepPoint(A)} with $|\eta_1| \lesssim (r \sigma)^{-\frac{2}{k}}$, $|\ell| \sim \frac{r^{-1} \sigma}{|\eta_1|^{\frac{k-2}{2}}}$, and $|c| \leq 2$ we have \eqref{eq:Representation(1)} with $|C| \leq C^*$ and $|\ell_1| \sim \frac{r^{-1} \sigma}{|\xi_1|^{\frac{k-2}{2}}}$.
%\smallskip

%\item Let 
%\begin{equation}
%\label{eq:RepPoint(A)}
%p = h \gamma_4(\eta_1) + \ell \dot{\gamma}_4(\eta_1) + c r^{-2} \mathbf{n}(\eta_1)
%\end{equation}
% with $|\eta_1| \ll r^{-\frac{1}{2}} \sigma^{-\frac{1}{2}}$, $|\ell| \ll r^{-\frac{1}{2}} \sigma^{\frac{3}{2}}$, and $|c| \leq 2$. Then we have
%\begin{equation}
%\label{eq:RepPoint(B)}
%p = h \gamma_4(0) + \ell_1 \dot{\gamma}_4(0) + C r^{-2} \mathbf{n}(0)
%\end{equation}
%with $|C| \leq 10$ and $|\ell_1| \ll r^{-\frac{1}{2}} \sigma^{\frac{3}{2}}$, which implies  for $h \ll \sigma^2$:
%\begin{equation*}
%p \in \Theta(\sigma/2,0) \cap \{ \omega_3 = h \}.
%\end{equation*}
%Furthermore, for $p$ defined by \eqref{eq:RepPoint(A)} with $|\eta_1| \ll r^{-\frac{1}{2}} \sigma^{-\frac{1}{2}}$, $|\ell| \sim r^{-\frac{1}{2}} \sigma^{\frac{3}{2}}$, we obtain \eqref{eq:RepPoint(B)} with $|\ell_1| \sim r^{-\frac{1}{2}} \sigma^{\frac{3}{2}}$.
%\item
%Let $p$ be defined by \eqref{eq:RepPoint(A)} with 

\end{enumerate}
\end{lemma}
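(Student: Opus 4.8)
The plan is to prove Lemma~\ref{lem:ComparabilityTheta} by a direct perturbative expansion: we take a point $p$ written with respect to the frame centered at $\eta_1$ and re-express it with respect to the frame centered at a nearby admissible center $\xi_1$ (or at $0$, in part (1)), collecting the resulting errors into the normal and tangential coefficients and checking they stay within the claimed sizes. The only ingredients needed are the derivative bounds \eqref{eq:DerivativeBounds} — so that $\gamma_k^{(m)}(\omega) \sim \omega^{k-m}$ on the relevant dyadic block — and the arithmetic of the spacings defining $\mathcal{R}(r^{-2}\sigma^{-2})$.

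First, for part (1): with $|\eta_1| \ll (r\sigma)^{-2/k}$ the relevant block around the origin is a single $\mathcal{O}(1)$-sector, so we compare everything to the center $0$. Writing $\gamma_k(\eta_1) = \gamma_k(0) + \eta_1 \dot{\gamma}_k(0) + \mathcal{O}(\eta_1^2 \cdot \eta_1^{k-2}) = \mathcal{O}(\eta_1^2) \cdot(\text{bounded vectors})$ componentwise — here the second coordinate $f_k(\eta_1) = \mathcal{O}(\eta_1^k)$ by \eqref{eq:DerivativeBounds} — and likewise $\dot{\gamma}_k(\eta_1) = \dot{\gamma}_k(0) + \mathcal{O}(\eta_1^{k-1})$ and $\mathbf{n}_\gamma(\eta_1) = \mathbf{n}_\gamma(0) + \mathcal{O}(\eta_1^{k-1})$, I substitute into \eqref{eq:RepPoint(A)}. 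The term $h\gamma_k(\eta_1)$ contributes, besides $h\gamma_k(0)$, an error of size $h\,\mathcal{O}(\eta_1^2)$; since $h \le \sigma^2$ and $|\eta_1| \ll (r\sigma)^{-2/k}$, this is $\ll \sigma^2 (r\sigma)^{-2/k} \cdot (r\sigma)^{-2/k}$ in the tangential direction (absorbed into $\ell_1$, using that $(r\sigma)^{-2/k} \ge r^{-2/k}$ is much larger than $r^{-2}$ after multiplying by $\sigma^2$) and $\mathcal{O}(h\eta_1^k) \ll \mathcal{O}(r^{-2})$ in the normal direction (using $\eta_1 \ll (r\sigma)^{-2/k}$ and $h \le \sigma^2$, so $h\eta_1^k \ll \sigma^2 (r\sigma)^{-2} = r^{-2}$), hence absorbed into $Cr^{-2}\mathbf{n}_\gamma(0)$. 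The tangential term $\ell \dot{\gamma}_k(\eta_1)$ produces $\ell \dot{\gamma}_k(0)$ plus $\ell\,\mathcal{O}(\eta_1^{k-1})$, which is again tiny compared to $r^{-2}$, and the normal term $c r^{-2}\mathbf{n}_\gamma(\eta_1)$ produces $cr^{-2}\mathbf{n}_\gamma(0)$ plus a lower-order piece; collecting, one gets \eqref{eq:RepPoint(B)} with $|\ell_1| \ll (r\sigma)^{-2/k}\sigma^2$ and $|C| \le C^*(c_0, C_m)$. The ``furthermore'' with $|\ell| \sim (r\sigma)^{-2/k}\sigma^2$ is identical except that now the main tangential term dominates the errors (since all error contributions were shown to be $\ll$ the main term), so $|\ell_1| \sim (r\sigma)^{-2/k}\sigma^2$.

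For part (2): with $|\eta_1| \sim K' \gtrsim (r\sigma)^{-2/k}$ we must move to an actual admissible center $\xi_1 \in \mathcal{R}(r^{-2}(\sigma/2)^{-2})$, chosen at distance $|\xi_1 - \eta_1| \lesssim \frac{r^{-1}\sigma^{-1}}{|\eta_1|^{(k-2)/2}}$ — such a $\xi_1$ exists because $\mathcal{R}(r^{-2}(\sigma/2)^{-2})$ has points spaced by exactly $\frac{(r^{-2}(\sigma/2)^{-2})^{1/2}}{K'^{(k-2)/2}} \sim \frac{r^{-1}\sigma^{-1}}{K'^{(k-2)/2}}$ in this block. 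Set $\Delta = \xi_1 - \eta_1$. Taylor expanding around $\xi_1$: $\gamma_k(\eta_1) = \gamma_k(\xi_1) - \Delta\dot{\gamma}_k(\xi_1) + \mathcal{O}(\Delta^2 K'^{k-2})$, $\dot{\gamma}_k(\eta_1) = \dot{\gamma}_k(\xi_1) + \mathcal{O}(\Delta K'^{k-2})$, $\mathbf{n}_\gamma(\eta_1) = \mathbf{n}_\gamma(\xi_1) + \mathcal{O}(\Delta K'^{k-2})$, where I use \eqref{eq:DerivativeBounds} so that the $m$-th derivative of the nonlinear entries is $\sim K'^{k-m}$ throughout the block (valid since $|\xi_1| \sim |\eta_1| \sim K'$, which follows from $|\Delta| \ll K'$; this is where the constraint $\sigma \ge r^{-1}$ combined with $K' \gtrsim (r\sigma)^{-2/k}$ forces $|\Delta| = \frac{r^{-1}\sigma^{-1}}{K'^{(k-2)/2}} \le K'^{(k-2)/2}\cdot K' / K'^{(k-2)/2} = K'$ after checking the exponents). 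Substituting into \eqref{eq:RepPoint(A)}: the $h\gamma_k(\eta_1)$ term gives $h\gamma_k(\xi_1)$, a tangential contribution $-h\Delta\dot{\gamma}_k(\xi_1)$ of size $h|\Delta| \le \sigma^2 \cdot \frac{r^{-1}\sigma^{-1}}{K'^{(k-2)/2}} = \frac{r^{-1}\sigma}{K'^{(k-2)/2}}$ — exactly the scale $\ell(\Theta(\sigma/2,\xi))$, so it goes into $\ell_1$ — and a normal contribution $\mathcal{O}(h\Delta^2 K'^{k-2})$; since $h\Delta^2 K'^{k-2} \le \sigma^2 \cdot \frac{r^{-2}\sigma^{-2}}{K'^{k-2}}\cdot K'^{k-2} = r^{-2}$, this is absorbed into the normal coefficient. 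The term $\ell\dot{\gamma}_k(\eta_1)$ with $|\ell| \ll \frac{r^{-1}\sigma}{K'^{(k-2)/2}}$ gives $\ell\dot{\gamma}_k(\xi_1)$ (contributing to $\ell_1$) plus $\ell\,\mathcal{O}(\Delta K'^{k-2})$, and one checks $|\ell\Delta K'^{k-2}| \ll r^{-2}$; finally $cr^{-2}\mathbf{n}_\gamma(\eta_1) = cr^{-2}\mathbf{n}_\gamma(\xi_1) + \mathcal{O}(r^{-2}|\Delta|K'^{k-2})$, again a normal error $\ll r^{-2}$. One must also bookkeep that $\dot{\gamma}_k(\xi_1)$ and $\mathbf{n}_\gamma(\xi_1)$ are not orthogonal in general but span (together with $\gamma_k(\xi_1)$-direction) the ambient space with angles bounded below by \eqref{eq:DerivativeBounds}, so rewriting the accumulated error in the $\{\dot{\gamma}_k(\xi_1), \mathbf{n}_\gamma(\xi_1)\}$ basis only costs a constant $C(C_m)$. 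Collecting gives \eqref{eq:Representation(1)} with $|\ell_1| \ll \frac{r^{-1}\sigma}{|\xi_1|^{(k-2)/2}}$ and $|C| \le C^*(c_0, C_m)$; since $h \ll \sigma^2$ this is precisely the defining condition for $p \in \Theta(\sigma/2, \xi) \cap \{\omega_3 = h\}$ after comparing with the definition of $\Theta(\sigma/2, \xi)$ (the $\{|a| \le (\sigma/2)^2\}$ slab contains height $h$ since $h \ll \sigma^2$, and the dilation constant $c_0 \to C$ is exactly what the bound on $|C|$ records). The ``furthermore'' with $|\ell| \sim \frac{r^{-1}\sigma}{|\eta_1|^{(k-2)/2}}$ is the same, observing all errors were $\ll$ the main tangential term so $|\ell_1|$ remains $\sim \frac{r^{-1}\sigma}{|\xi_1|^{(k-2)/2}}$.

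The main obstacle, and the point requiring the most care, is the uniformity of the derivative estimates $\gamma_k^{(m)}(\omega) \sim \omega^{k-m}$ across the \emph{entire} dyadic block $|\omega| \sim K'$ traversed when passing from $\eta_1$ to $\xi_1$ — i.e.\ verifying that the displacement $|\Delta| = \frac{r^{-1}\sigma^{-1}}{K'^{(k-2)/2}}$ is genuinely $\ll K'$, so that no sign change or degeneracy of curvature is crossed. This amounts to the inequality $r^{-1}\sigma^{-1} \ll K'^{k/2}$, which holds precisely because we are in the regime $K' \gtrsim (r\sigma)^{-2/k}$ (with the implicit constant chosen large enough), and is the cone analogue of the scale relation underlying Lemma~\ref{lem:Comparability}; one simply has to be careful that the same chain of inequalities also controls the error terms $h\Delta^2 K'^{k-2}$, $\ell\Delta K'^{k-2}$, and $r^{-2}\Delta K'^{k-2}$ against $r^{-2}$ simultaneously, which reduces to the single bound $|\Delta| \ll K'^{-(k-2)} \cdot (\text{something} \gtrsim 1)$, again equivalent to $K' \gtrsim (r\sigma)^{-2/k}$.
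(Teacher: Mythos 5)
Your proof takes essentially the same approach as the paper: Taylor-expand the frame vectors around the target center ($0$ in part (1), a nearby admissible $\xi_1 \in \mathcal{R}(r^{-2}(\sigma/2)^{-2})$ in part (2)), use the derivative bounds \eqref{eq:DerivativeBounds} to size the remainders, and absorb the Lagrange errors into the tangential coefficient $\ell_1$ and the normal coefficient $C r^{-2}$ after checking inequalities such as $h\,(\Delta\xi_1)^2\,|\xi_1|^{k-2} \lesssim r^{-2}$ and $|\ell\,\Delta\xi_1\,|\xi_1|^{k-2}| \ll r^{-2}$. The key size verifications and the use of $|\eta_1|\gtrsim (r\sigma)^{-2/k}$ to ensure $|\Delta\xi_1|\ll |\eta_1|$ are all correctly identified; the only slips are cosmetic (e.g.\ in part (1) the dominant tangential contribution from $h\gamma_k(\eta_1)-h\gamma_k(0)$ is $h\eta_1$, not $h\,\mathcal{O}(\eta_1^2)$, though it still satisfies the required $\ll (r\sigma)^{-2/k}\sigma^2$; and $\dot\gamma_k(\xi_1)$, $\mathbf{n}_\gamma(\xi_1)$ are in fact orthogonal, merely unnormalized).
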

\begin{proof}
We can suppose that $\sigma \gg r^{-1}$ since for $\sigma \sim r^{-1}$ there are only finitely many $\xi \in \mathcal{R}(r^{-2} \sigma^{-2})$.

\medskip

\emph{Proof of (1):} We compute under the above assumptions by Taylor expansion:
\begin{equation*}
\begin{split}
&\quad h \gamma_4(\eta_1) + \ell \dot{\gamma}_4(\eta_1) + c r^{-2} \mathbf{n}_{\gamma}(\eta_1) \\
&= h (\gamma_k(0) + \frac{\eta_1^k}{k!} \gamma^{(k)}_k(\bar{\xi}_a)) + \ell (1,f_k'(\eta_1))  + c r^{-2} \mathbf{n}_{\gamma}(\eta_1) 
\end{split}
\end{equation*}
and recall that $|f_k'(\eta_1)| \sim |\eta_1|^{k-1}$, $|f_k^{(k)}| \sim 1$.

Consequently, for $|\eta_1| \ll (r \sigma)^{-\frac{2}{k}}$, $|\ell| \ll (r \sigma)^{-\frac{2}{k}} \sigma^2$, and $|c| \leq 2$, we find
\begin{equation*}
h \gamma_4(\eta_1) + \ell \dot{\gamma}_4(\eta_1) + c r^{-2} \mathbf{n}_{\gamma}(\eta_1) = (h \eta_1 + \ell + c r^{-2} \mathbf{n}_{\gamma,1}(\eta_1), h + c_1 r^{-2})
\end{equation*}
with $|c_1| \leq |c_0|+1$. This yields the representation
\begin{equation*}
h \gamma_4(\eta_1) + \ell \dot{\gamma}_4(\eta_1) + c r^{-2} \mathbf{n}_{\gamma}(\eta_1) = h(0,1) + \ell_1 (1,0) + C r^{-2}(0,1)
\end{equation*}
with $|\ell_1| \ll (r \sigma)^{-\frac{2}{k}} \sigma^2$ and $|C| \leq C^*(c_0,C_m)$.

\smallskip

The same computation shows that for $|\eta_1| \lesssim (r \sigma)^{-\frac{2}{k}}$ and $|\ell_1| \sim (r \sigma)^{-\frac{2}{k}} \sigma^2$ we obtain a representation as
\begin{equation*}
h \gamma_k(0) + \ell \dot{\gamma}_k(0) + C r^{-2} \mathbf{n}_{\gamma}(0)
\end{equation*}
with $|\ell| \sim (r \sigma)^{-\frac{2}{k}} \sigma^2$.
On the other hand, for $|\eta_1| \lesssim (r \sigma)^{-\frac{2}{k}}$ and $|\ell_1| \gg (r \sigma)^{-\frac{2}{k}} \sigma^2$ we have that $p \notin \Theta(\sigma,0) \cap \{ \omega_3 = h \}$.

\medskip

\emph{Proof of (2):} Let $p = h \gamma_k(\eta_1) + \ell \dot{\gamma}_k(\eta_1) + c r^{-2} \mathbf{n}_{\gamma}(\eta_1)$ with $|\eta_1| \gtrsim (r \sigma)^{-\frac{2}{k}}$, $|\ell| \ll \frac{r^{-1} \sigma}{|\eta_1|^{\frac{k-2}{2}}}$, and $|c| \leq c_0$. To show the representation \eqref{eq:Representation(1)}, we carry out a Taylor expansion at $\xi_1 \sim \eta_1$ with $|\Delta \xi_1| \sim \frac{r^{-1} \sigma}{|\xi_1|^{\frac{k-2}{2}}}$ to find
\begin{equation*}
\begin{split}
&\quad h \gamma_k(\xi_1 + \Delta \xi_1) + \ell \dot{\gamma}_k(\xi_1 + \Delta \xi_1) + c r^{-2} \mathbf{n}_{\gamma}(\xi_1 + \Delta \xi_1) \\
&= h (\gamma_k(\xi_1) + (\Delta \xi_1) \dot{\gamma}_k(\xi_1) + \mathcal{O}(h (\Delta \xi_1)^2 \ddot{\gamma}_k(\xi_1)) + \\
&\quad \ell (\dot{\gamma}_k(\xi_1) + \mathcal{O}( (\Delta \xi_1) \ddot{\gamma}_k(\xi_1)) + c r^{-2} \mathbf{n}_{\gamma}(\xi_1).
\end{split}
\end{equation*}
Recalling that $|\ddot{\gamma}_k(\xi_1)| \sim |\xi_1|^{k-2}$ such that by hypothesis we have
\begin{equation*}
|h (\Delta \xi_1)^2 \ddot{\gamma}_k(\xi_1)| \ll r^{-2} \text{ and } |\ell (\Delta \xi_1) \ddot{\gamma}_k(\xi_1)| \ll r^{-2}.
\end{equation*}
This allows us to write
\begin{equation*}
h \gamma_k(\eta_1) + (\ell+h (\Delta \eta_1)) \dot{\gamma}_k(\eta_1)+ C r^{-2} \mathbf{e},
\end{equation*}
with $|\mathbf{e}| \leq 1$ and $|C| \leq C^*$. $\mathbf{e}$ can then be decomposed into tangential and normal vector, which yields the desired representation.

\end{proof}

In the following we rely on $\sigma \gg \sigma_0$ and $|\eta_1| \sim K$. We shall see that the points $p \in \tilde{\theta}(\eta) \cap \Omega'_\sigma \cap \{ \omega_3 = h \}$ for $h \ll \sigma^2$ with the above representation
\begin{equation*}
p = h \gamma_k(\eta_1) + \ell \dot{\gamma}_k(\eta_1) + c r^{-2} \mathbf{n}_{\gamma}(\eta)
\end{equation*}
actually satisfy $|\ell| \sim \frac{r^{-1} \sigma}{K^{\frac{k-2}{2}}}$. Together with the previous lemma, we have the following representation:
\begin{lemma}
\label{lem:TwoEndsRepresentation}
Let $|\eta_1| \sim K$, $h \leq \sigma^2$, $p \in \tilde{\theta}(\eta_1) \cap \{ \omega_3 = h \} \cap \Omega'_\sigma$ and $\sigma \gg \sigma_0 \sim \frac{r^{-1}}{K^{\frac{k-2}{2}}}$.
%\begin{enumerate}
%\item
%Then, for $|\eta_1| \ll r^{-\frac{1}{2}} \sigma^{-\frac{1}{2}}$ we have the representation
%\begin{equation}
%\label{eq:TwoEndsRepresentation1}
%p = h \gamma_4(0) + \ell \dot{\gamma}_4(0) + C r^{-2} \mathbf{n}(0)
%\end{equation}
%with $|C| \leq 10$ and $|\ell| \lesssim r^{-\frac{1}{2}} \sigma^{\frac{3}{2}}$. For $h \ll \sigma^2$ \eqref{eq:TwoEndsRepresentation1} holds with $|\ell| \sim r^{-\frac{1}{2}} \sigma^{\frac{3}{2}}$.
%
%\item
We have the representation
\begin{equation}
\label{eq:TwoEndsRepresenation2}
p = h \gamma_k(\xi_1) + \ell \dot{\gamma}_k(\xi_1) + C r^{-2} \mathbf{n}_{\gamma}(\xi_1)
\end{equation}
with $\xi_1 \in \mathcal{R}_1(r^{-2} \sigma^{-2})$, $|\xi_1 - \eta_1| \leq 2 \frac{r^{-1} \sigma^{-1}}{|\xi_1|^{\frac{k-2}{2}}}$, $|\ell| \lesssim \frac{r^{-1} \sigma}{K^{\frac{k-2}{2}}}$, and $|C| \leq C^*(c_0,C_m)$. For $h \ll \sigma^2$ \eqref{eq:TwoEndsRepresenation2} holds with $|\ell| \sim \frac{r^{-1} \sigma}{K^{\frac{k-2}{2}}}$.
%\end{enumerate}
\end{lemma}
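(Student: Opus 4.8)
The plan is to obtain \eqref{eq:TwoEndsRepresenation2} in three moves — a crude representation from $p\in\tilde{\theta}(\eta_1)$, a sharpening of its tangential coefficient from $p\in\Omega'_{\leq\sigma}$, and a re-centering onto the lattice $\mathcal{R}_1(r^{-2}\sigma^{-2})$ — and then, when $h\ll\sigma^2$, to extract the matching lower bound from Lemma~\ref{lem:ComparabilityTheta}(2). As in that lemma one may assume $\sigma\gg r^{-1}$; one may further restrict to the range $K\gg(r\sigma)^{-2/k}$ relevant for the application (the ``few-planks'' range, where $\#\{\Theta(\sigma,\xi)\in\mathbf{CP}'_\sigma\}\lesssim 1$, being handled by the cruder variant alluded to in the surrounding text) and to $h\gtrsim r^{-2}$. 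First, since $p\in\tilde{\theta}(\eta_1)\cap\{\omega_3=h\}$ with $|\eta_1|\sim K$, the description of $\pi_{12}(\text{supp}(\mathcal{F}(|f_\theta|^2))\cap\{\omega_3=h\})$ recorded just before Lemma~\ref{lem:ComparabilityTheta} gives a representation of the shape \eqref{eq:RepPoint(A)}, namely $p=h\gamma_k(\eta_1)+\ell_0\dot{\gamma}_k(\eta_1)+cr^{-2}\mathbf{n}_\gamma(\eta_1)$ with $|c|\leq 2$ and the \emph{wide} bound $|\ell_0|\lesssim r^{-1}/K^{(k-2)/2}$.

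The heart of the proof is to upgrade this to $|\ell_0|\lesssim r^{-1}\sigma/K^{(k-2)/2}$. Because $p\in\Omega'_\sigma\subseteq\bigcup\mathbf{CP}'_\sigma$, the point $p$ lies in some centered plank $\Theta(\sigma,\zeta)$ with $\zeta_1\sim K$; slicing that plank at $\{\omega_3=h\}$ — using $r^{-2}\ll\sigma^2$ to absorb the $\mathcal{O}(r^{-2})$ freedom in the radial coordinate into the tangent and normal directions, with $|\gamma_k(\zeta_1)|\lesssim 1$ and normal component $\lesssim K^k r^{-2}$ — shows that $p$ also lies in a box about $h\gamma_k(\zeta_1)$ of tangential side $\lesssim r^{-1}\sigma/K^{(k-2)/2}$ and normal side $\lesssim r^{-2}$, hence $p=h\gamma_k(\zeta_1)+\ell_\zeta\dot{\gamma}_k(\zeta_1)+C_\zeta r^{-2}\mathbf{n}_\gamma(\zeta_1)$ with $|\ell_\zeta|\lesssim r^{-1}\sigma/K^{(k-2)/2}$, $|C_\zeta|\leq C^*$. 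Now $h\gamma_k(\eta_1)$ and $h\gamma_k(\zeta_1)$ both lie within tangential distance $\lesssim r^{-1}/K^{(k-2)/2}$ and normal distance $\lesssim r^{-2}$ of $p$; the sliced curve $t\mapsto h\gamma_k(t)$ has, near $t\sim K$, curvature $\sim|f_k''(t)|/h\sim K^{k-2}/h$ by \eqref{eq:DerivativeBounds}, so an arc of it whose normal extent is $\lesssim r^{-2}$ has tangential extent $\lesssim(r^{-2}h/K^{k-2})^{1/2}=r^{-1}h^{1/2}/K^{(k-2)/2}$. Therefore $|h(\zeta_1-\eta_1)|\lesssim r^{-1}h^{1/2}/K^{(k-2)/2}\leq r^{-1}\sigma/K^{(k-2)/2}$ since $h\leq\sigma^2$, and comparing the tangential components of the two representations of $p$ (the second-order terms in $\zeta_1-\eta_1$ contributing $\lesssim r^{-2}$ via \eqref{eq:DerivativeBounds}) gives $\ell_0=\ell_\zeta+h(\zeta_1-\eta_1)+\mathcal{O}(r^{-2})$, hence the sharpened bound.

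For the last two moves, let $\xi_1\in\mathcal{R}_1(r^{-2}\sigma^{-2})$ be the nearest lattice point to $\eta_1$; since the spacing for $|\xi_1|\sim K$ is $r^{-1}\sigma^{-1}/K^{(k-2)/2}$, one gets $|\xi_1-\eta_1|\leq 2r^{-1}\sigma^{-1}/|\xi_1|^{(k-2)/2}$ and $\xi_1\sim K$. Taylor-expanding $\gamma_k,\dot{\gamma}_k,\mathbf{n}_\gamma$ about $\xi_1$ with \eqref{eq:DerivativeBounds}, all remainders are $\lesssim r^{-2}$ — the ones to check being $h(\eta_1-\xi_1)^2|f_k''(\xi_1)|\lesssim h r^{-2}\sigma^{-2}\leq r^{-2}$ and $|\ell_0|\,|\eta_1-\xi_1|\,|f_k''(\xi_1)|\lesssim r^{-2}$ — so $p=h\gamma_k(\xi_1)+\ell\dot{\gamma}_k(\xi_1)+Cr^{-2}\mathbf{n}_\gamma(\xi_1)$ with $|C|\leq C^*$ and $\ell=\ell_0+h(\eta_1-\xi_1)+\mathcal{O}(r^{-2})$; since $|h(\eta_1-\xi_1)|\leq h r^{-1}\sigma^{-1}/K^{(k-2)/2}\leq r^{-1}\sigma/K^{(k-2)/2}$, the bound $|\ell|\lesssim r^{-1}\sigma/K^{(k-2)/2}$ follows, which is \eqref{eq:TwoEndsRepresenation2} for all $h\leq\sigma^2$. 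When $h\ll\sigma^2$, the reverse inequality comes from $p\notin\Omega'_{\leq\sigma/2}$: if $|\ell_0|\ll r^{-1}\sigma/|\eta_1|^{(k-2)/2}$, then Lemma~\ref{lem:ComparabilityTheta}(2) (applicable since $|\eta_1|\gtrsim(r\sigma)^{-2/k}$ and $|c|\leq 2$) produces $\xi\in\mathcal{R}(r^{-2}(\sigma/2)^{-2})$ with $|\xi_1-\eta_1|\lesssim r^{-1}\sigma^{-1}/K^{(k-2)/2}\ll K$, hence $\xi_1\sim K$ and $p\in\Theta(\sigma/2,\xi)\cap\{\omega_3=h\}\subseteq\Omega'_{\leq\sigma/2}$, a contradiction; thus $|\ell_0|\gtrsim r^{-1}\sigma/K^{(k-2)/2}$, and since $|\ell-\ell_0|=|h(\eta_1-\xi_1)+\mathcal{O}(r^{-2})|\ll r^{-1}\sigma/K^{(k-2)/2}$ when $h\ll\sigma^2$, also $|\ell|\sim r^{-1}\sigma/K^{(k-2)/2}$.

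The main obstacle is the second move: converting the wide tangential bound $r^{-1}/K^{(k-2)/2}$ inherited from $\tilde{\theta}$ into the scale-$\sigma$ bound $r^{-1}\sigma/K^{(k-2)/2}$. This is precisely where the hypothesis $h\leq\sigma^2$ is consumed, through the curvature $\sim K^{k-2}/h$ of the slice $t\mapsto h\gamma_k(t)$; and the bookkeeping must be done with some care, since every Taylor remainder occurring in the three moves has to be shown to stay below the normal width $r^{-2}$ so that it can be reabsorbed without inflating $C$ beyond $C^*$ — this uses the full family of derivative bounds \eqref{eq:DerivativeBounds} — while the boundary regimes $K\sim r^{-2/k}$, $h\sim r^{-2}$, $\sigma\sim\sigma_0$ have to be disposed of separately by the cruder variants already invoked in the surrounding text.
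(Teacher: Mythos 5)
Your overall strategy differs from the paper's in the crucial step: the paper argues by contradiction, supposing a point of $\tilde\theta(\eta_1)\cap\Omega'_\sigma$ with tangential coefficient $|\ell|\gg r^{-1}\sigma/K^{(k-2)/2}$ and ruling out, via separate Taylor expansions, that it can sit in the right end or the left end of any $\Theta(\sigma,\xi)$; you instead compare the $\eta_1$-representation directly with a second representation at the center $\zeta_1$ of some $\Theta(\sigma,\zeta)\ni p$, which is a legitimate and in principle cleaner route. Your Moves 1, 3 and the lower bound via Lemma~\ref{lem:ComparabilityTheta}(2) are fine.

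But the heart of Move 2 contains a genuine gap. You assert that because $h\gamma_k(\eta_1)$ and $h\gamma_k(\zeta_1)$ are both within normal distance $\lesssim r^{-2}$ of $p$, and the sliced curve has curvature $\sim K^{k-2}/h$, the arc joining them must have tangential extent $\lesssim(r^{-2}h/K^{k-2})^{1/2}$, hence $|h(\zeta_1-\eta_1)|\lesssim r^{-1}h^{1/2}/K^{(k-2)/2}$. This does not follow. The hypothesis constrains only the two endpoints (their tangent lines pass near $p$), and a convex arc between two such points can have arbitrarily long chord: take $y=Rx^2$, endpoints $(\pm a,Ra^2)$, and $p=(0,-Ra^2)$, where both tangent lines pass through $p$ exactly, yet $|\zeta_1-\eta_1|=2a$ is unconstrained. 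In your notation this is exactly the scenario $\ell_0\approx-\ell_\zeta$: the tangential projection gives $h(\zeta_1-\eta_1)=\ell_0-\ell_\zeta+\mathcal O(r^{-2})\approx 2\ell_0$, which can be far larger than $r^{-1}h^{1/2}/K^{(k-2)/2}$ while the desired conclusion $|\ell_0|\lesssim r^{-1}\sigma/K^{(k-2)/2}$ still holds (because then $|\ell_0|\approx|\ell_\zeta|$). So the intermediate claim is false and the chain of inferences as written breaks.

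The repair is to project onto the normal direction, as the paper does in \eqref{eq:LengthComparabilityI}--\eqref{eq:LengthComparabilityII}: writing $\Delta=\zeta_1-\eta_1$, this yields $|\tfrac{h\Delta}{2}+\ell_\zeta|\,|\Delta|\,K^{k-2}\lesssim r^{-2}$, which together with the tangential identity $h\Delta=\ell_0-\ell_\zeta+\mathcal O(r^{-2})$ gives (once $|\ell_0-\ell_\zeta|\gg r^{-2}$ is separated off) $|\ell_0^2-\ell_\zeta^2|\lesssim r^{-2}h/K^{k-2}\leq r^{-2}\sigma^2/K^{k-2}$, and then $|\ell_0|\lesssim r^{-1}\sigma/K^{(k-2)/2}$ follows with no bound on $\Delta$ needed. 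As written, your argument only establishes the bound when $\ell_0$ and $\ell_\zeta$ have the same sign; the cancellation case has to be handled either by observing it is trivial, or by working with $\ell_0^2-\ell_\zeta^2$ as above. A smaller point: the upfront restrictions $h\gtrsim r^{-2}$ and $K\gg(r\sigma)^{-2/k}$ are neither used nor clearly dispensed with; the second is in fact needed to make your third-order Taylor remainder $\lesssim r^{-2}$ in Move 3, so it should be justified (it holds exactly because for $K\lesssim(r\sigma)^{-2/k}$ all relevant $\theta$ fall into the single plank $\Theta(\sigma,0)$), while the first appears to be unnecessary.
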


We remark that as a consequence of the assumptions, it follows that $|\eta_1| \gtrsim (r \sigma)^{-\frac{2}{k}}$ and that $\xi_1$ is always bounded away from zero.

A consequence of the lemma is that $\Omega'_\sigma$ at height $h \ll \sigma^2$ for $\sigma \gg \sigma_0$ consists entirely of a mild dilation of the ends of $\Theta(\sigma,\xi) \cap \{ \omega_3 = h \}$.

\smallskip

Under the above assumptions on $|\xi|$ and $\sigma \gg \sigma_0$, we define the right end of $\Theta(\sigma,\xi) \cap \{ \omega_3 = h \}$ as collection of points:
\begin{equation*}
\text{RE}(\Theta(\sigma,\xi),h) = 
\{ h \gamma_k(\xi_1) + \ell \dot{\gamma}_k(\xi_1) + C r^{-2} \mathbf{n}_{\gamma}(\xi_1) \, : \, \ell \sim \frac{r^{-1} \sigma}{|\xi_1|^{\frac{k-2}{2}}}, \quad |C| \leq C^*(c_0,C_m) \}.
\end{equation*}
The left end of $\Theta(\sigma,\xi) \cap \{ \omega_3 = h \}$ is correspondingly defined as
\begin{equation*}
\text{LE}(\Theta(\sigma,\xi),h) = 
\{ h \gamma_k(\xi_1) + \ell \dot{\gamma}_4(\xi_1) + C r^{-2} \mathbf{n}_{\gamma}(\xi_1) \, : \, \quad \ell \sim - \frac{r^{-1} \sigma}{|\xi_1|^{\frac{k-2}{2}}}, \quad |C| \leq C^* \}.
\end{equation*}

We turn to the proof of Lemma \ref{lem:TwoEndsRepresentation}:
\begin{proof}[Proof~of~Lemma~\ref{lem:TwoEndsRepresentation}]

We shall see that a point
\begin{equation*}
p = h \gamma_k(\eta_1) + \ell \dot{\gamma}_k(\eta_1) + c r^{-2} \mathbf{n}_{\gamma}(\eta)
\end{equation*}
with $|\ell| \gg \frac{r^{-1} \sigma}{K^{\frac{k-2}{2}}}$ cannot be covered with any $\Theta(\sigma,\xi)$. For $h \sim \sigma^2$ recall that
\begin{equation*}
\mathcal{N}_{c_1 r^{-2}}(h \gamma_k)  \subseteq \bigcup \mathbf{CP}'_{\sigma} \cap \{ \omega_3 = h \} \subseteq \mathcal{N}_{c_2 r^{-2}}(h \gamma_k),
\end{equation*}
but $p = h \gamma_k(\eta_1) + \ell \dot{\gamma}_k(\eta_1) \notin \mathcal{N}_{c_2 r^{-2}}(h \gamma_k)$ for $|\ell| \gg \frac{r^{-1} \sigma}{K^{\frac{k-2}{2}}}$
as a consequence of Taylor's theorem. 

\smallskip

In the following we turn to $h \ll \sigma^2$ and suppose that there exists a point 
\begin{equation*}
p = h \gamma_k(\eta_1) + \ell \dot{\gamma}_k(\eta_1) + c r^{-2} \mathbf{n}_{\gamma}(\eta_1) \in \Omega_\sigma' \cap \{ \omega_3 = h \}
\end{equation*}
with $|\ell| \gg \frac{r^{-1} \sigma}{K^{\frac{k-2}{2}}}$. We shall see that the right side of $\tilde{\theta}(\eta)$ at this length cannot be covered by the right nor the left side of $\Theta(\sigma,\xi)$.

\smallskip

\emph{Case 1:} $\ell \gg \frac{r^{-1} \sigma}{K^{\frac{k-2}{2}}}$. We shall see that the ``long" right side can neither be covered by the right nor the left side of $\Theta(\sigma,\xi)$. 

\emph{Case 1a:} We exclude the case that $p$ is covered by the right side of some $\Theta(\sigma,\xi)$, for which we argue by contradiction.
Suppose that for $\ell_1 \geq 0$:
\begin{equation}
\label{eq:Rep(P)}
h \gamma_k(\eta_1) + \ell \dot{\gamma}_k(\eta_1) + c r^{-2} \mathbf{n}_{\gamma}(\eta_1) = h \gamma_k(\xi_1) + \ell_1 \dot{\gamma}_k(\xi_1) + C r^{-2} \mathbf{n}_{\gamma}(\xi_1).
\end{equation}
We can suppose that $\ell_1 \gg r^{-2}$ since otherwise, $p \in \mathcal{N}_{C r^{-2}}(h \gamma_k)$ but $\mathcal{N}_{C r^{-2}}(h \gamma_4)$ is covered by $\mathbf{CP}'_{\sigma/2}$, in which case $p \notin \Omega'_\sigma$.

Since $\ell_1 \lesssim \frac{r^{-1} \sigma}{K^{\frac{k-2}{2}}}$, $h \ll \sigma^2$, by projection to the first coordinate we have $|\xi_1 - \eta_1| \gg \frac{r^{-1} \sigma^{-1}}{K^{\frac{k-2}{2}}}$. We shall see that necessarily the right ends of $\Theta(\sigma,\xi)$ are essentially disjoint. 
\begin{figure}[ht!]
\label{fig:DisjointEnds}
\centering
\includegraphics[width=100mm]{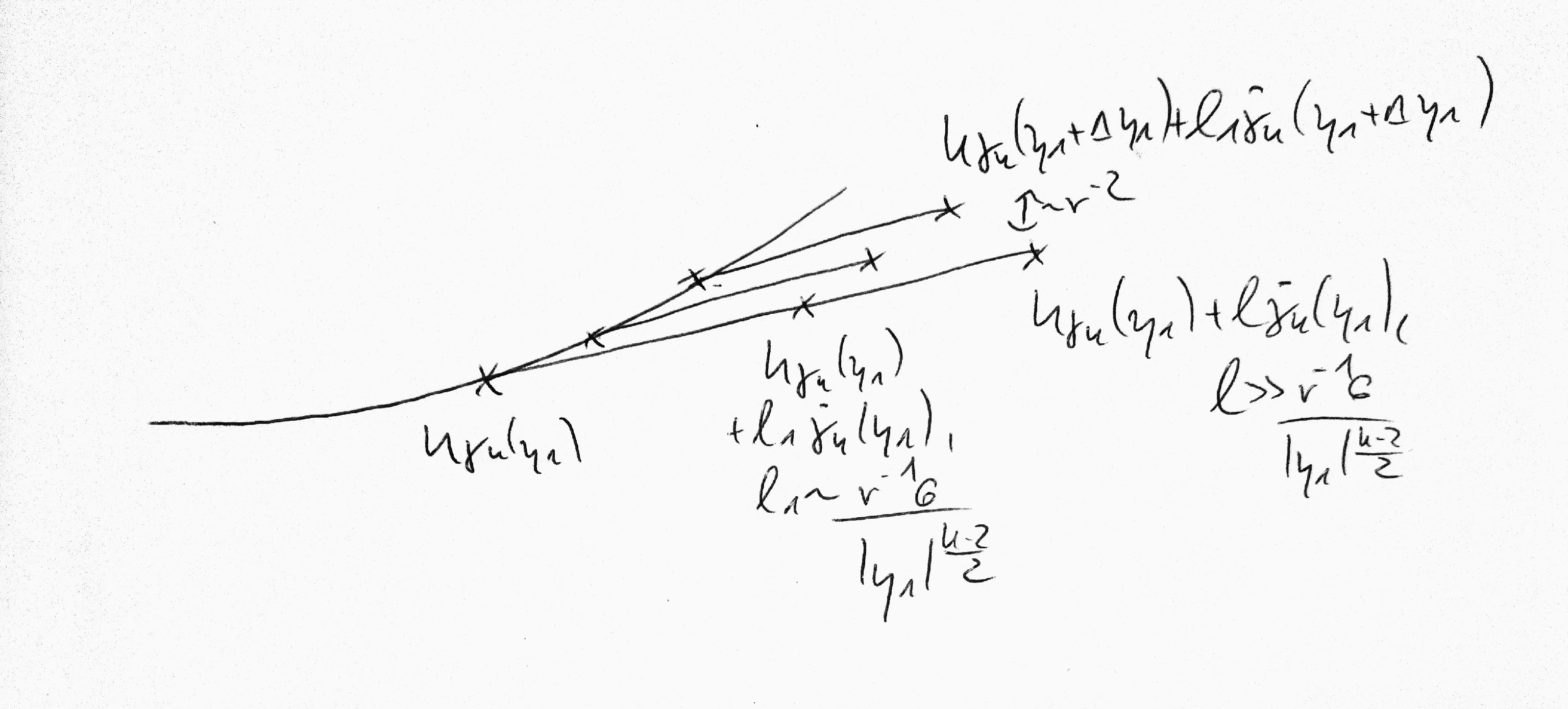}
\caption{Essential disjointness of the ends of $\Theta(\sigma,\xi)$.}
\end{figure}

Let $\Delta \xi_1= \bar{C} \frac{r^{-1} \sigma^{-1}}{K^{\frac{k-2}{2}}}$ for $\bar{C} \gg 1$ to find by Taylor expansion with Lagrange remainder
\begin{equation}
\label{eq:TaylorRightEnds}
\begin{split}
&\quad h \gamma_k(\xi_1 + \Delta \xi_1) + \ell \dot{\gamma}_k(\xi_1+\Delta \xi_1) + C r^{-2} \mathbf{n}_{\gamma}(\xi_1 + \Delta \xi_1) \\
&= h (\gamma_k(\xi_1) + (\Delta \xi_1) \dot{\gamma}_k(\xi_1) + \frac{ (\Delta \xi)^2}{2} \ddot{\gamma}_k(\bar{\xi}_a) ) \\
&\quad + \ell (\dot{\gamma}_k(\xi_1) + \Delta \xi_1 \ddot{\gamma}_k(\xi_1) + \frac{(\Delta \xi)^2}{2} \gamma_k^{(3)}(\bar{\xi}_b)) + C r^{-2} \mathbf{n}_{\gamma}(\xi_1 + \Delta \xi_1).
\end{split}
\end{equation}

We estimate by the derivative bounds for $f_k$:
\begin{equation*}
\big| \frac{h (\Delta \xi)^2}{2} \ddot{\gamma}_k(\bar{\xi}_a) \big| + \big| \frac{\ell (\Delta \xi_1)^2}{2} \gamma_k^{(3)}(\bar{\xi}_b) \big| \ll r^{-2}.
\end{equation*}

Moreover, 
\begin{equation*}
\langle \mathbf{e}_2, \ell (\Delta \xi_1) \ddot{\gamma}_k(\xi_1) \rangle \sim \langle \mathbf{n}_{\gamma}(\eta_1), \ell (\Delta \xi_1) \ddot{\gamma}_k(\xi_1) \rangle \gg r^{-2}.
\end{equation*}
This shows that the right end of $\Theta(\sigma,\xi)$ has too much distance to $h \gamma_k(\eta_1) + \ell \dot{\gamma}_k(\eta_1)$ in the direction of $\mathbf{n}_{\gamma}(\eta_1)$ for $\xi_1 = \eta_1 + \Delta \xi_1$ to intersect with $\tilde{\theta}(\eta)$.

\medskip

\emph{Case 1b:} Next, we shall exclude the case that $p$ can be covered by the left side of $\Theta(\sigma,\xi)$. Suppose that for $p \in \Omega_{\sigma} \cap \{ \omega_3 = h \}$:
\begin{equation}
\label{eq:Case1bOverlap}
h \gamma_k(\eta_1) + \ell \dot{\gamma}_k(\eta_1) + c r^{-2} \mathbf{n}_{\gamma}(\eta_1) = p = h \gamma_k(\xi_1) + \ell_1 \dot{\gamma}_k(\xi_1) + C r^{-2} \mathbf{n}_{\gamma}(\xi_1).
\end{equation}
\begin{figure}[ht!]
\label{fig:LeftEndRightEnd}
\centering
\includegraphics[width=90mm]{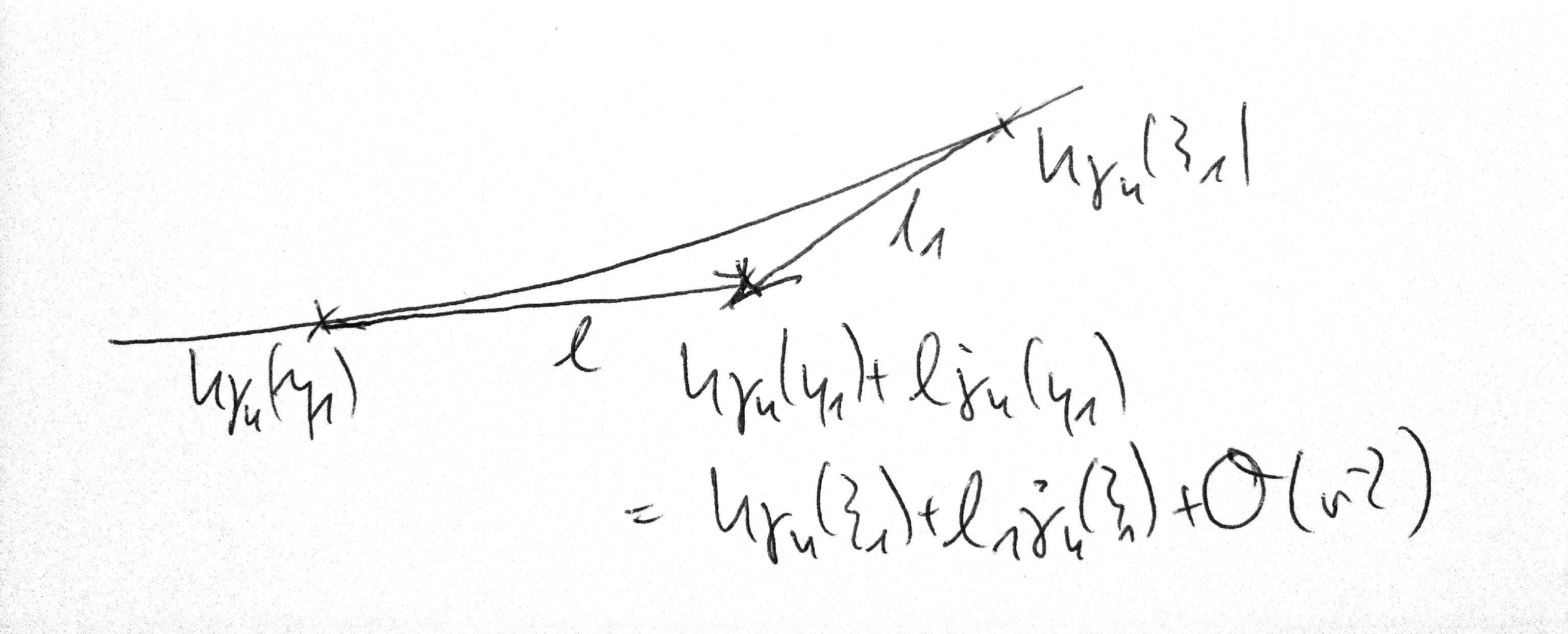}
\caption{We shall see that in case the long end of $\tilde{\theta}(\eta)$ touches the other end of $\Theta(\sigma,\xi)$ the distance to the curve is $\mathcal{O}(r^{-2})$.}
\end{figure}

Suppose that $\ell \gg \frac{r^{-1} \sigma}{|\eta_1|^{\frac{k-2}{2}}} \gtrsim |\ell_1|$. We carry out a Taylor expansion of the right hand side of \eqref{eq:Case1bOverlap} to find
\begin{equation*}
\begin{split}
&\quad h \gamma_k(\xi_1) + \ell_1 \dot{\gamma}_k(\xi_1) \\
&= h ( \gamma_k(\eta_1 + \Delta \xi_1)) + \ell_1 \dot{\gamma}_k(\eta_1 + \Delta \xi_1) \\
&= h ( \gamma_k(\eta_1) + \Delta \xi_1 \dot{\gamma}_k(\eta_1) + \frac{(\Delta \xi_1)^2}{2} \ddot{\gamma}_k(\eta_1) + \ldots + \frac{(\Delta \xi_1)^k}{k!} \gamma_k^{(k)}(\bar{\xi}_a)) \\ 
&\quad + \ell_1 ( \dot{\gamma}_k(\eta_1) + \Delta \xi_1 \ddot{\gamma}_k(\eta_1) + \ldots + \frac{(\Delta \xi_1)^{k-1}}{(k-1)!} \gamma_k^{(k)}(\bar{\xi}_b) ).
\end{split}
\end{equation*}

Projecting to the first coordinate yields
\begin{equation}
\label{eq:LengthComparabilityI}
\ell = h \Delta \xi_1 + \ell_1 + \mathcal{O}(r^{-2}) \text{ and consequently, } \ell \sim h \Delta \xi_1 \gg |\ell_1|.
\end{equation}
Projecting to the second coordinate yields
\begin{equation}
\label{eq:LengthComparabilityII}
\begin{split}
&\quad \big( \frac{h \Delta \xi_1}{2} + \ell_1 \big) \Delta \xi_1 \ddot{\gamma}_k(\eta_1) + \big( \frac{h \Delta \xi_1}{3!} + \frac{\ell_1}{2} \big) (\Delta \xi_1)^2 \gamma_k^{(3)}(\eta_1) + \ldots \\
&+ \big( \frac{h \Delta \xi_1}{k!} \gamma_k^{(k)}(\bar{\xi}_a) + \frac{\ell_1}{(k-1)!} \gamma_k^{(k)}(\bar{\xi}_b) \big) ( \Delta \xi_1)^{k-1} = \mathcal{O}(r^{-2}).
\end{split}
\end{equation}

By further finite subdivision of the dyadic range $|\eta_1| \sim |\xi_1| \sim K$ we can arrange that $|\Delta \xi_1| \ll K$. Together with the derivative bounds of $f_k$ we obtain from plugging \eqref{eq:LengthComparabilityI} into \eqref{eq:LengthComparabilityII} that
\begin{equation*}
h (\Delta \xi_1)^2 \ddot{\gamma}_k(\eta_1) \lesssim r^{-2}.
\end{equation*}
Now we obtain from another Taylor expansion that
\begin{equation*}
\begin{split}
&\quad h \gamma_k(\eta_1) + h (\Delta \xi_1) \dot{\gamma}_k(\eta_1) + c r^{-2} \mathbf{n}_{\gamma}(\eta_1) \\
&= h \gamma_k(\eta_1 + \Delta \eta_1) + \frac{h (\Delta \xi_1)^2}{2} \ddot{\gamma}_k(\bar{\xi}_c) + cr^{-2} \mathbf{n}_{\gamma}(\eta_1) \\
&= h \gamma_k(\eta_1 + \Delta \xi_1) + \mathcal{O}(r^{-2}).
\end{split}
\end{equation*}

By convexity, this gives for the left hand side of \eqref{eq:Case1bOverlap} for $\ell \leq h \Delta \xi_1$:
\begin{equation*}
\text{lhs} \eqref{eq:Case1bOverlap} = h \gamma_k(\eta_1) + \ell \dot{\gamma}_k(\eta_1) + c r^{-2} \mathbf{n}_{\gamma}(\eta_1) \in \mathcal{N}_{C r^{-2}}(h \gamma_k),
\end{equation*}
but recalling that $\mathcal{N}_{C r^{-2}}(h \gamma_k) \subseteq \bigcup \mathbf{CP}_{\sigma/2}$, this shows that $p \notin \Omega_\sigma$.

\medskip

\emph{Case 2:} The case of  $p \in \tilde{\theta}(\eta) \cap \Omega_\sigma$ being located on the long left side of $\tilde{\theta}$ can be ruled out by mirroring the above arguments.

\end{proof}

\begin{remark}
\label{rem:DifferentScales}
\begin{enumerate}
\item The Taylor expansion in \eqref{eq:TaylorRightEnds} shows the essential disjointness of the right ends of $\Theta(\sigma,\xi)$ and likewise for the left ends, \emph{mutatis mutandis}.
\smallskip

\item It is for this lemma we require the dyadic pigeonholing $|\xi_1| \sim K$. If we do not impose this constraint, there can be an unfavorable additional overlap between different scales:
\begin{figure}[ht!]
\label{fig:CurvatureDeviation}
\centering
\includegraphics[width=90mm]{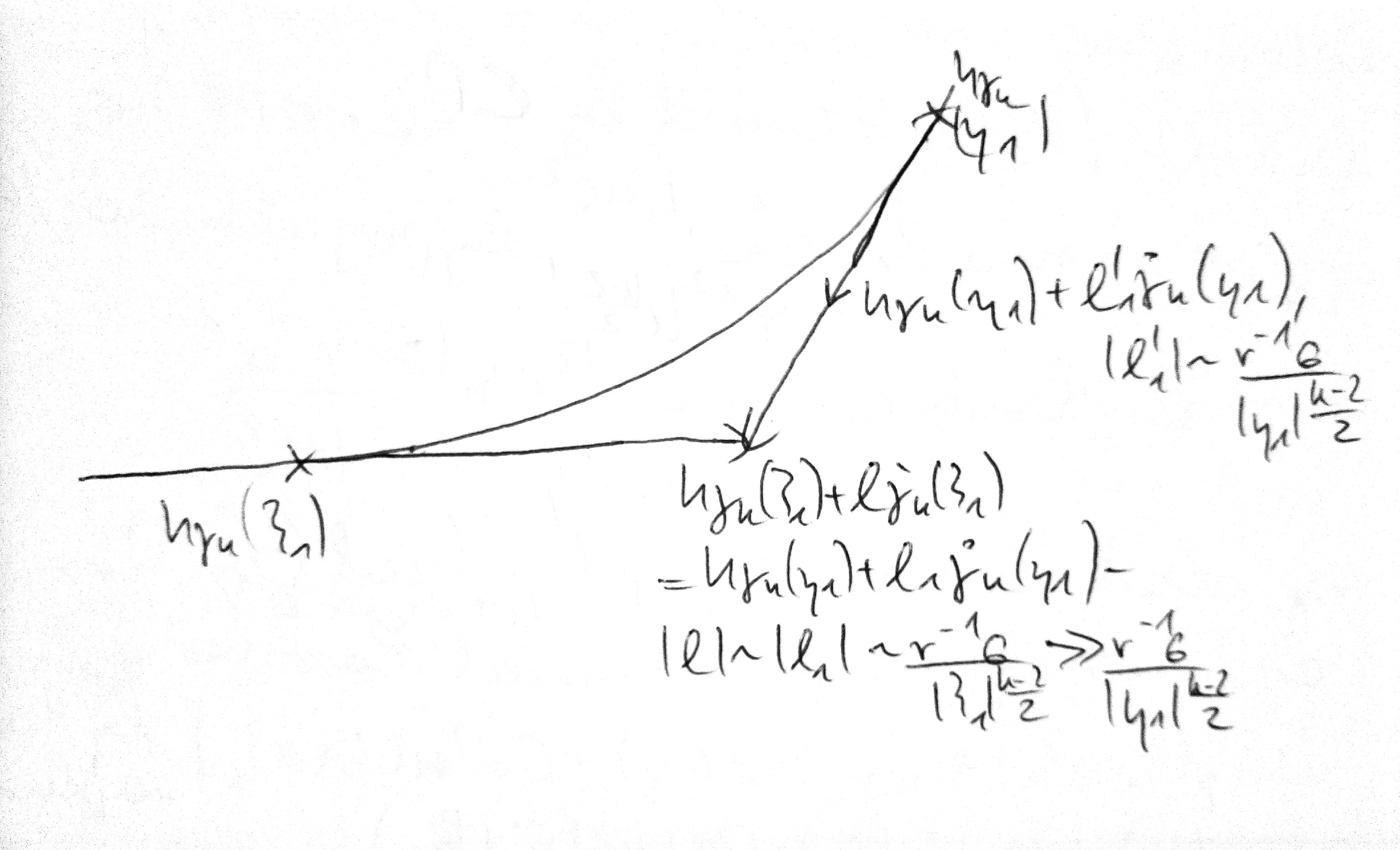}
\caption{Non-trivial interaction between regions with significantly different curvature.}
\end{figure}
It becomes possible that a long left side of $\tilde{\theta}(\eta_1)$ at a scale $\eta_1 \gg \xi_1$ touches the right end of $\Theta(\sigma,\xi_1)$
\begin{equation*}
h \gamma_k(\xi_1) + \ell_1 \dot{\gamma}_k(\xi_1)  = p = h \gamma_k(\eta_1) + \ell \dot{\gamma}_k(\eta_1)
\end{equation*}
 with 
\begin{equation*}
|\ell| \sim \ell_1 \sim \frac{r^{-1} \sigma}{|\xi_1|^{\frac{k-2}{2}}} \gg \frac{r^{-1} \sigma}{|\eta_1|^{\frac{k-2}{2}}}.
\end{equation*}
This can happen because the ``canonical" scale of $\Theta(\sigma,\eta_1)$ is much smaller than the one of $\Theta(\sigma,\xi_1)$ due to a significant change of the curvature. It is conceivable that this interaction between different scales corresponds to a logarithmic loss compared to the case of the circular cone. Indeed, the base curve of the circular cone has constant curvature, for which reason the pigeonholing into regions with comparable curvature is not necessary for the above argument. This moreover points out that in case $k=2$, the estimate \eqref{eq:GeneralizedKakeya} holds without logarithmic loss.
\end{enumerate}
\end{remark}

The following is immediate from the inclusion property $\Theta(\sigma,\xi) \subseteq M \Theta(\sigma,\xi')$ for neighbouring base points $\xi$, $\xi'$.

\begin{corollary}
\label{cor:StructureOmega1}
Let $|\eta| \sim K$, $\sigma \geq \sigma_0$, and $\omega \in \tilde{\theta}(\eta) \cap \Omega'_\sigma$.

Then, we have $\omega \in S \Theta(\sigma,\xi)$ for $\theta(\eta) \in \Theta(\sigma,\xi)$. $S$ depends on $C^*$ and $M$.
\end{corollary}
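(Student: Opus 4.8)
The goal is Corollary~\ref{cor:StructureOmega1}: given $|\eta|\sim K$, $\sigma\geq\sigma_0$, and $\omega\in\tilde\theta(\eta)\cap\Omega'_\sigma$, we want $\omega\in S\,\Theta(\sigma,\xi)$ for the base point $\xi$ with $\theta(\eta)\in\Theta(\sigma,\xi)$, where $S$ depends only on $C^*$ and $M$.

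The plan is to simply combine Lemma~\ref{lem:TwoEndsRepresentation} with the nesting property $\Theta(\sigma,\xi)\subseteq M\,\Theta(\sigma,\xi')$ for neighbouring base points. Here is how I would carry it out. First, if $h=\pi_3(\omega)\sim\sigma^2$, then $\omega$ lies in the full-height slab of a centred plank at scale $\sigma$, and by the inclusion $\mathcal{N}_{c_1 r^{-2}}(h\gamma_k)\subseteq\bigcup\mathbf{CP}'_\sigma\cap\{\omega_3=h\}\subseteq\mathcal{N}_{c_2 r^{-2}}(h\gamma_k)$ established in the proof of Lemma~\ref{lem:TwoEndsRepresentation}, together with the fact that $\tilde\theta(\eta)\cap\{\omega_3=h\}$ is a rectangle centred at $h\gamma_k(\eta_1)$ of tangential length $\sim r^{-1}\sigma_0 K^{-(k-2)/2}$ and normal width $\sim r^{-2}$, we get that $\omega$ lies within $\mathcal{O}(r^{-1}\sigma K^{-(k-2)/2})$ of $h\gamma_k(\eta_1)$ tangentially and $\mathcal{O}(r^{-2})$ normally; since $\theta(\eta)\in\Theta(\sigma,\xi)$ forces $|\xi_1-\eta_1|\lesssim r^{-1}\sigma^{-1}K^{-(k-2)/2}$, a Taylor expansion at $\xi_1$ (using the derivative bounds \eqref{eq:DerivativeBounds} and $h\lesssim\sigma^2$ to control the remainder by $\mathcal{O}(r^{-2})$) rewrites $\omega$ in the $\Theta(\sigma,\xi)$-frame with parameters blown up by an absolute constant, giving $\omega\in S\,\Theta(\sigma,\xi)$.

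For the main case $h\ll\sigma^2$, I would invoke Lemma~\ref{lem:TwoEndsRepresentation} directly: it says any $p\in\tilde\theta(\eta_1)\cap\{\omega_3=h\}\cap\Omega'_\sigma$ has the representation $p=h\gamma_k(\xi_1')+\ell\dot\gamma_k(\xi_1')+Cr^{-2}\mathbf{n}_\gamma(\xi_1')$ with $\xi_1'\in\mathcal{R}_1(r^{-2}\sigma^{-2})$, $|\xi_1'-\eta_1|\leq 2 r^{-1}\sigma^{-1}|\xi_1'|^{-(k-2)/2}$, $|\ell|\sim r^{-1}\sigma K^{-(k-2)/2}$, and $|C|\leq C^*$. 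This $\xi_1'$ and the base point $\xi$ attached to $\theta(\eta)$ via the association $\theta(\eta)\in\Theta(\sigma,\xi)$ are both within the prescribed spacing of $\eta_1$, hence neighbouring: $|\xi_1-\xi_1'|\lesssim r^{-1}\sigma^{-1}K^{-(k-2)/2}$. A single further Taylor expansion moving the base point from $\xi_1'$ to $\xi_1$ — again controlled by \eqref{eq:DerivativeBounds}, with the shift in $\ell$ of size $h\,|\Delta\xi_1|\lesssim\sigma^2 r^{-1}\sigma^{-1}K^{-(k-2)/2}=r^{-1}\sigma K^{-(k-2)/2}$ which is comparable to the tangential length, and the normal error $\mathcal{O}(r^{-2})$ — expresses $\omega$ in the $\Theta(\sigma,\xi)$-frame with all parameters within a constant multiple of the defining bounds. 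This constant depends only on $C^*$ (from Lemma~\ref{lem:TwoEndsRepresentation}) and on the nesting constant $M$, which is exactly the claimed dependence. Finally, since the fibres over every $h$ are uniformly dilated, one concludes $\omega\in S\,\Theta(\sigma,\xi)$ as a subset of $\R^3$.

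The bulk of the work — ruling out the ``long side'' overlaps and establishing the two-ends structure — has already been done in Lemma~\ref{lem:TwoEndsRepresentation}, so the only real content here is bookkeeping: checking that the base-point shift between $\xi_1'$ (produced by the lemma) and $\xi_1$ (the plank we want to land in) is within neighbouring distance and that the resulting Taylor errors stay $\mathcal{O}(r^{-2})$ tangentially and normally. The mild annoyance is that the tangential shift $h\,\Delta\xi_1$ in the change-of-base-point is not negligible — it is of the same order as the plank's tangential length — but it is still bounded by a constant times that length, so it only contributes to the constant $S$; I expect that reconciling this shift with the normalization of $\ell$ in the definition of $\Theta(\sigma,\xi)$ is the one step that needs a line of care rather than a citation.
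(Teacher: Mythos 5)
Your proposal is correct and takes essentially the same route as the paper, which disposes of the corollary in a single sentence as ``immediate from the inclusion property $\Theta(\sigma,\xi)\subseteq M\Theta(\sigma,\xi')$ for neighbouring base points.'' You spell out the bookkeeping that the paper leaves implicit — invoking Lemma~\ref{lem:TwoEndsRepresentation} to land $\omega$ at some nearby base point $\xi_1'$, observing that $\xi_1'$ and the associated $\xi_1$ are both within the spacing $r^{-1}\sigma^{-1}K^{-(k-2)/2}$ of $\eta_1$ hence neighbouring, and then invoking the nesting constant $M$ together with $C^*$ to get the claimed $S$ — but the underlying argument is the same.
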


Secondly,
\begin{corollary}
\label{cor:StructureOmega2}
Let $\omega \in \Omega'_\sigma$ with $\omega_3 = h \ll \sigma^2$ and $\sigma \gg \sigma_0$. Then 
\begin{equation*}
\pi_{12}(\omega) \in S \text{Ends}(h) = S \big( \bigcup_{\xi \in \mathcal{R}(r^{-2} \sigma^{-2})} \text{LE}(\Theta(\sigma,\xi),h) \cup \bigcup_{\xi \in \mathcal{R}(r^{-2} \sigma^{-2})} \text{RE}(\Theta(\sigma,\xi),h)).
\end{equation*}
\end{corollary}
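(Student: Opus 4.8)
The plan is to read the statement off Lemma~\ref{lem:TwoEndsRepresentation}; the only ingredient to be added is the observation that every point of $\Omega'_\sigma$ lies in a mild dilation of one of the planks $\tilde{\theta}(\eta)$ with $|\eta_1| \sim K$, so that the lemma is applicable to it.

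First I would record the covering step. Recall that $\mathbf{CP}'_1 = \{ \Theta(1,\xi) : \xi \in \mathcal{R}(r^{-2}), \; \xi_1 \sim K \}$ and that the centered planks $\Theta(1,\xi)$ are comparable, up to a dilation $M = M(c_0,C_m)$, to the planks $\tilde{\theta}(\xi)$; in particular $\Theta(1,\xi) \subseteq M\tilde{\theta}(\xi)$. Hence $\Omega'_\sigma \subseteq \Omega'_{\leq 1} = \bigcup \mathbf{CP}'_1 \subseteq \bigcup_{|\xi_1| \sim K} M\tilde{\theta}(\xi)$, so any $\omega \in \Omega'_\sigma$ with $\omega_3 = h$ satisfies $\pi_{12}(\omega) \in M\big(\tilde{\theta}(\eta) \cap \{ \omega_3 = h \}\big)$ for some $\eta$ with $|\eta_1| \sim K$. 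I would then apply Lemma~\ref{lem:TwoEndsRepresentation} to such $\omega$, working with the $M$-dilated plank in place of $\tilde{\theta}(\eta)$, which only inflates the constants by a factor $O(M)$. Since $h \ll \sigma^2$ and $\sigma \gg \sigma_0$, this produces $\xi_1 \in \mathcal{R}_1(r^{-2}\sigma^{-2})$ with $|\xi_1 - \eta_1| \leq 2 \frac{r^{-1}\sigma^{-1}}{|\xi_1|^{\frac{k-2}{2}}}$ together with the representation
\begin{equation*}
\pi_{12}(\omega) = h \gamma_k(\xi_1) + \ell \dot{\gamma}_k(\xi_1) + C r^{-2} \mathbf{n}_{\gamma}(\xi_1), \qquad |\ell| \sim \frac{r^{-1}\sigma}{K^{\frac{k-2}{2}}}, \qquad |C| \leq C^*(c_0,C_m).
\end{equation*}

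Next I would match the length scales. By the remark following Lemma~\ref{lem:TwoEndsRepresentation}, $\xi_1$ is bounded away from $0$ and $|\eta_1| \sim K \gtrsim (r\sigma)^{-\frac{2}{k}}$; combined with $\sigma \gg \sigma_0 \sim \frac{r^{-1}}{K^{\frac{k-2}{2}}}$ the separation bound shows that $\xi_1$ lies within a distance $\ll K$ of $\eta_1$ (after, if needed, a harmless finite subdivision of the dyadic range $|\eta_1| \sim K$), hence $|\xi_1| \sim |\eta_1| \sim K$ and $\frac{r^{-1}\sigma}{K^{\frac{k-2}{2}}} \sim \frac{r^{-1}\sigma}{|\xi_1|^{\frac{k-2}{2}}}$. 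Comparing the displayed representation with the definitions of $\text{RE}(\Theta(\sigma,\xi),h)$ and $\text{LE}(\Theta(\sigma,\xi),h)$, I would conclude $\pi_{12}(\omega) \in S\,\text{RE}(\Theta(\sigma,\xi),h)$ when $\ell > 0$ and $\pi_{12}(\omega) \in S\,\text{LE}(\Theta(\sigma,\xi),h)$ when $\ell < 0$, for a dilation $S = S(C^*,M)$ absorbing the implicit constants in the comparabilities above; in either case $\pi_{12}(\omega) \in S\,\text{Ends}(h)$.

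I do not expect a genuine obstacle here: all of the substance sits in Lemma~\ref{lem:TwoEndsRepresentation}, and the only care required is bookkeeping of the various $O(1)$ dilations — relating $\Omega'_\sigma$ to the planks $\tilde{\theta}$, relating the pigeonholed base point $\eta$ to the centered-plank base point $\xi$, and matching the two tangential scales $\frac{r^{-1}\sigma}{K^{\frac{k-2}{2}}}$ and $\frac{r^{-1}\sigma}{|\xi_1|^{\frac{k-2}{2}}}$ — all of which collapse into the single constant $S$.
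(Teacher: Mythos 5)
Your proof is correct and follows the paper's intended route: the corollary is stated in the paper as an immediate consequence of Lemma~\ref{lem:TwoEndsRepresentation}, and your argument spells out exactly the bookkeeping implicit there, namely that any $\omega\in\Omega'_\sigma$ lies in a mild dilation of some $\tilde\theta(\eta)$ with $|\eta_1|\sim K$, that the lemma then supplies the representation with $|\ell|\sim\frac{r^{-1}\sigma}{K^{(k-2)/2}}$, and that $|\xi_1|\sim K$ (via the separation bound and the finite subdivision of the dyadic range) lets you identify this with the $S$-dilated left or right end of $\Theta(\sigma,\xi)$ depending on the sign of $\ell$. The constant-tracking through the dilation factors $M$, $C^*$, and the comparability $K\sim|\xi_1|$ is exactly the sort of harmless absorption into $S$ the paper intends.
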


We complete the first step of the proof of \eqref{eq:FiniteOverlap} invoking Corollary \ref{cor:StructureOmega1}:
\begin{equation}
\label{eq:ProofFiniteOverlap1}
\int_{\Omega'_\sigma} \big| \sum_{\Theta(\sigma,\xi) \in \mathbf{CP}'_{\sigma}} \big| \sum_{\substack{\theta \in \Theta(\sigma,\xi), \\ \xi \sim K}} \big(|f_{\theta}|^2\big) \widehat{\,}(\omega) \big| \big|^2 \lesssim \int_{\Omega'_\sigma} \big| \sum_{\substack{\Theta(\sigma,\xi) \in \mathbf{CP}'_{\sigma}, \\ \omega \in S \Theta(\sigma,\xi)}} \big| \sum_{\substack{\theta \in \Theta(\sigma,\xi), \\ \xi \sim K}} \big(|f_{\theta}|^2\big) \widehat{\,}(\omega) \big| \big|^2.
\end{equation}

In the second step we shall see that there are only finitely many $\Theta(\sigma,\xi)$ which are contributing:

\begin{lemma}
\label{lem:AuxOverlapII}
Let $\omega \in \Omega'_\sigma$. Then the following estimate holds:
\begin{equation*}
\# \{ \Theta(\sigma,\xi) : \omega \in S \Theta(\sigma,\xi) \} \lesssim 1.
\end{equation*}
\end{lemma}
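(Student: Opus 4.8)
The claim is a bounded-overlap statement for the centered planks $\Theta(\sigma,\xi)$ at a fixed height regime, after the dilation by $S$. The plan is to reduce it to the slice $\{\omega_3 = h\}$ and there exploit the end-structure established in Corollary~\ref{cor:StructureOmega2}, together with the essential disjointness of the right ends (resp. left ends) of the $\Theta(\sigma,\xi)$ noted in Remark~\ref{rem:DifferentScales}~(1). Concretely, fix $\omega \in \Omega'_\sigma$ and let $h = \omega_3$. If $h \sim \sigma^2$, then $\bigcup \mathbf{CP}'_\sigma \cap \{\omega_3 = h\}$ is contained in $\mathcal{N}_{c_2 r^{-2}}(h\gamma_k)$, and the planks $\Theta(\sigma,\xi)$ restricted to this slice are (mild dilations of) the canonical $\frac{r^{-1}\sigma}{K^{(k-2)/2}} \times r^{-2}$ rectangles covering a piece of the curve with finite overlap; since the dilation factor $S$ is a fixed constant depending only on $C^*$ and $M$, the $S$-dilates still have $\mathcal{O}_S(1)$ overlap by the standard convexity/separation argument for canonical curve coverings. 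So in that regime the bound is immediate.

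The main case is $h \ll \sigma^2$ with $\sigma \gg \sigma_0$. Here I invoke Corollary~\ref{cor:StructureOmega2}: $\pi_{12}(\omega)$ lies in $S\,\mathrm{Ends}(h)$, i.e.\ within $\mathcal{O}(r^{-2})$ (in the normal direction) and $\mathcal{O}(r^{-1}\sigma/K^{(k-2)/2})$ (in the tangential direction) of the right or left end of some $\Theta(\sigma,\xi)$. Say it is near the right ends. By the Taylor expansion in \eqref{eq:TaylorRightEnds} — precisely the computation behind Remark~\ref{rem:DifferentScales}~(1) — if $\xi_1, \xi_1'$ are two admissible base points with $|\xi_1 - \xi_1'| \gg \frac{r^{-1}\sigma^{-1}}{K^{(k-2)/2}}$ (the spacing in $\mathcal{R}_1(r^{-2}\sigma^{-2})$ for $|\xi_1|\sim K$), then $\mathrm{RE}(\Theta(\sigma,\xi),h)$ and $\mathrm{RE}(\Theta(\sigma,\xi'),h)$ are separated in the direction $\mathbf{n}_\gamma$ by a quantity $\gtrsim |\xi_1 - \xi_1'| \cdot |\ell|\cdot|\ddot\gamma_k(\xi_1)| \gg r^{-2}$, using $|\ell|\sim \frac{r^{-1}\sigma}{K^{(k-2)/2}}$ and $|\ddot\gamma_k(\xi_1)|\sim K^{k-2}$. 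Enlarging by the fixed factor $S$ changes this only by a constant, so a point $\omega$ can be $S$-close to the right end of only $\mathcal{O}_S(1)$ many $\Theta(\sigma,\xi)$ with base points in one dyadic block $|\xi_1|\sim K$. Since we have already pigeonholed so that all relevant $\theta$ (hence all relevant $\xi$, via the association rule) satisfy $|\xi_1|\sim K$ — and by Lemma~\ref{lem:TwoEndsRepresentation} the $\xi_1$ in the representation of $\omega$ is bounded away from zero and comparable to $K$ — there is only the one dyadic block to consider. The left-end case is symmetric, and a point near the right end of one plank cannot simultaneously be near the left end of another plank at the same block without both being within $\mathcal{O}(r^{-2})$ of the curve (as in Case~1b of the proof of Lemma~\ref{lem:TwoEndsRepresentation}), which would put $\omega$ in $\Omega'_{\leq \sigma/2}$, contradicting $\omega \in \Omega'_\sigma$. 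Hence the total count is $\mathcal{O}(1)$.

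The main obstacle I anticipate is bookkeeping the dilation constant $S$ uniformly: I must check that the end-separation estimate survives multiplication by $S$, i.e.\ that the implicit constant in $\gg r^{-2}$ can be taken larger than any fixed multiple of $S$ by choosing the separation threshold $\bar C$ in \eqref{eq:TaylorRightEnds} large enough — this is fine since $S$ depends only on $C^*$ and $M$, which are fixed before $\bar C$ is chosen. A secondary point is handling the transitional regime $h \sim \sigma^2$ versus $h \ll \sigma^2$ cleanly; I would absorb the boundary case $h \sim \sigma^2$ into the first paragraph's argument, noting that the curve-covering overlap bound is robust under the $\mathcal{O}(r^{-2})$-thickening coming from $h$ being within a constant factor of $\sigma^2$. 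With these two checks in place the lemma follows.
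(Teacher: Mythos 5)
Your proposal follows the paper's proof almost verbatim: dispose of $h\sim\sigma^2$ by the finite overlap of canonical coverings under rescaling, then for $h\ll\sigma^2$ invoke Corollary~\ref{cor:StructureOmega2} to place $\pi_{12}(\omega)$ in $S\,\mathrm{Ends}(h)$ and use the Taylor expansion \eqref{eq:TaylorRightEnds} to conclude that the right ends (and, symmetrically, the left ends) are essentially disjoint, giving $O(1)$ overlap. One small remark: the extra claim that $\omega$ cannot lie near the right end of one plank and the left end of another without being $\mathcal{O}(r^{-2})$-close to the curve is both unnecessary and not quite what Case~1b gives (Case~1b treats $\ell$ much longer than the canonical length, whereas here $|\ell|$ and $|\ell_1|$ are both comparable to $\frac{r^{-1}\sigma}{K^{(k-2)/2}}$); you don't need it, since ``finitely many right ends plus finitely many left ends'' already gives the $\mathcal{O}(1)$ bound, which is exactly how the paper argues.
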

\begin{proof}
Note again that we can suppose $\sigma \gg \sigma_0$, since for $\sigma \sim \sigma_0$ it holds 
\begin{equation*}
\# \{ \Theta(\sigma,\xi) \in \mathbf{CP}'_{\sigma} \} \lesssim 1.
\end{equation*}

\smallskip

Firstly, suppose $h \sim \sigma^2$. Rescaling to unit height, we find that $h^{-1} ( \Theta(\sigma,\xi) \cap \{ \omega_3 = h \} )$ forms a canonical covering of $\gamma_k$ at scale $r^{-2} \sigma^{-2}$. Since the canonical covering is finitely overlapping, this settles the case $h \sim \sigma^2$.

\smallskip

Next, we turn to $h \ll \sigma^2$. We are in the position to invoke Corollary \ref{cor:StructureOmega2}. Then the finite overlap is a consequence of the finite overlap of the right ends and left ends separately. This follows from the Taylor expansions \eqref{eq:TaylorRightEnds} already carried out in Lemma \ref{lem:TwoEndsRepresentation}.

\end{proof}
We can now prove \eqref{eq:FiniteOverlap}.
We obtain for $\omega \in \Omega_\sigma'$:
\begin{equation*}
\begin{split}
\big| \sum_{\Theta(\sigma,\xi) \in \mathbf{CP}'_\sigma} \sum_{\substack{\theta \in \Theta(\sigma,\xi), \\ \xi \sim K}} \big( |f_\theta|^2 \big) \widehat{\,} (\omega) \big| &\lesssim \big| \sum_{\substack{\Theta(\sigma,\xi) \in \mathbf{CP}'_\sigma, \\ \omega \in S \Theta(\sigma,\xi)}} \sum_{\theta \in \Theta(\sigma,\xi)} \big( |f_{\theta}|^2 \big) \widehat{\,} (\omega) \big| \\
&\lesssim \big( \sum_{\substack{ \Theta(\sigma,\xi) \in \mathbf{CP}'_\sigma, \\ \omega \in S \Theta(\sigma,\xi)}} \big| \sum_\theta \big( |f_{\theta}|^2 \big) \widehat{\,}(\omega) \big|^2 \big)^{\frac{1}{2}}.
\end{split}
\end{equation*}
So integrating the above over $\Omega_\sigma'$ we obtain
\begin{equation*}
\begin{split}
\int_{\Omega'_\sigma} \big| \sum_{\Theta(\sigma,\xi) \in \mathbf{CP}'_\sigma} \sum_{\substack{\theta \in \Theta(\sigma,\xi), \\ \xi \sim K}} \big( |f_{\theta}|^2 \big) \widehat{\,} (\omega) \big|^2 d\omega &\lesssim \int_{\Omega_\sigma'} \sum_{\substack{ \Theta(\sigma,\xi) \in \mathbf{CP}'_\sigma, \\ \omega \in S \Theta(\sigma,\xi) }} \big| \sum_{\theta \in \Theta(\sigma,\xi)} \big( |f_\theta|^2 \big) \widehat{\,} (\omega) \big|^2 \\
&\lesssim \int_{\R^3} \sum_{\Theta(\sigma,\xi) \in \mathbf{CP}_\sigma} \big| \sum_{\substack{\theta \in \Theta(\sigma,\xi), \\ \xi \sim K}} \big( |f_\theta|^2 \big) \widehat{\,}(\omega) \big|^2 \\
&\lesssim \sum_{\Theta(\sigma,\xi) \in \mathbf{CP}_\sigma} \int_{\R^3} \big| \sum_{\theta \in \Theta(\sigma,\xi)} |f_{\theta}|^2 \big|^2.
\end{split}
\end{equation*}
The final estimate is again due to Plancherel's theorem. We can omit the localization to scales $\xi \sim K$ in the following.

\medskip

We let $\Theta(\sigma,\xi) \in \mathbf{CP}_\sigma$ bijectively correspond to $\tau \in \Theta_{r^{-2} \sigma^{-2}}$ via the base points and we can dominate
\begin{equation*}
\sum_{\theta \in \Theta(\sigma,\xi)} |f_{\theta}|^2 \leq \sum_{\theta \subseteq S \tau} |f_{\theta}|^2.
\end{equation*}

 $\theta \in \Theta(\sigma,\xi)$ corresponds to $\theta \subseteq S \tau$. Consequently, we can estimate
\begin{equation*}
\sum_{\Theta(\sigma,\xi) \in \mathbf{CP}_\sigma} \int_{\R^3} \big| \sum_{\theta \in \Theta(\sigma,\xi)} |f_{\theta}|^2 \big|^2 \lesssim \sum_{\tau \in \Theta_{r^{-2} \sigma^{-2}}} \int_{\R^3} \big| \sum_{\substack{\theta \in \Theta_{r^{-2}}: \\ \theta \subseteq S \tau}} |f_\theta|^2 \big|^2.
\end{equation*}

We have proved after pigeonholing \eqref{eq:CurvaturePigeonholing}, carrying out the sum over $\sigma \in [\sigma_0,1]$, and changing notation $(r \sigma)^{-1} = s$ that
\begin{equation}
\label{eq:AuxKakeyaEstimate}
\begin{split}
\int_{\R^3} \big| \sum_{\theta \in \Theta_{r^{-2}}} |f_{\theta}|^2 \big|^2 dx &\lesssim \log(r^{-1})
\sum_{r^{-1} \leq \sigma \leq 1} \sum_{\tau \in \Theta_{r^{-2} \sigma^{-2}}} \int_{\R^3} \big| \sum_{\substack{\theta \in \Theta_{r^{-2}}: \\ \theta \subseteq S \tau}} |f_{\theta}|^2 \big|^2 \\
&\lesssim \log(r^{-1}) \sum_{r^{-1} \leq s \leq 1} \sum_{\tau \in \Theta_{s^2}} \int_{\R^3} \big( \sum_{\substack{\theta \in \Theta_{r^{-2}}: \\ \theta \subseteq S \tau}} |f_{\theta}|^2 \big)^2.
\end{split}
\end{equation}
It remains to divide up the final integral into the dual regions of the Fourier support, which is a consequence of the uncertainty principle. 

\smallskip

To this end, choose a smooth function $\eta_{\tau}$, which is identical to one on $S \tilde{\tau}$ and rapidly decaying away from $S \tilde{\tau}$. Moreover, we require that the inverse Fourier transform $\check{\eta}_{\tau}$ is supported on $C \tilde{\tau}^* \approx U_{\tau,r^2}$ and satisfies $|\check{\eta}_{\tau}(x)| \lesssim | \tau^* |^{-1}$. By Plancherel's theorem and Fourier inversion we can break the integral into translates of $U_{\tau,r^2}$ to find
\begin{equation*}
\int_{\R^3} \big| \check{\eta}_{\tau} * \big( \sum_{\substack{ \theta \in \Theta_{r^{-2}}, \\ \theta \subseteq S \tau}} |f_{\theta}|^2 \big) \big|^2 = \sum_{U \parallel U_{\tau,r^2}} \int_U \big| \check{\eta}_{\tau} * \big( \sum_{\substack{ \theta \in \Theta_{r^{-2}}, \\ \theta \subseteq S \tau}} |f_{\theta}|^2 \big) \big|^2 dx.
\end{equation*}
We have for each $x \in U$:
\begin{equation*}
\big| \check{\eta}_{\tau} * \sum_{\substack{ \theta \in \Theta_{r^{-2}}, \\ \theta \subseteq S \tau}} |f_{\theta}|^2 (x) \big| \lesssim |U|^{-1} \int \eta_U \sum_{\substack{ \theta \in \Theta_{r^{-2}}, \\ \theta \subseteq S \tau}} |f_{\theta}|^2 (x) dx,
\end{equation*}
where $\eta_U$ denotes an $L^\infty$-normalized bump function supported in $CU$.

We obtain
\begin{equation}
\label{eq:PartitionEstimate}
\begin{split}
\sum_{U \parallel U_{\tau,r^2}} \int_U \big| \check{\eta}_{\tau} * \big( \sum_{\substack{ \theta \in \Theta_{r^{-2}}, \\ \theta \subseteq S \tau}} |f_{\theta}|^2 \big) \big|^2 dx &\lesssim \sum_{U \parallel U_{\tau,r^2}} |U|^{-1} \big( \eta_U \sum_{\substack{ \theta \in \Theta_{r^{-2}}, \\ \theta \subseteq S \tau}} |f_{\theta}|^2 (x) \big)^2 \\
&\lesssim \sum_{U \parallel U_{\tau,r^2}} |U|^{-1} \| S_U f \|^4_{L^2(U)}.
\end{split}
\end{equation}
The ultimate estimate follows from the support of $\eta_U$ being comparable to $U$. Plugging \eqref{eq:PartitionEstimate} into \eqref{eq:AuxKakeyaEstimate}, we find
\begin{equation*}
\int_{\R^3} \big( \sum_{\theta \in \Theta_{r^{-2}}} |f_{\theta}|^2 \big)^2 \lesssim \log(r^{-1}) \sum_{r^{-1} \leq s \leq 1} \sum_{\tau \in \Theta_{s^2}} \sum_{U \parallel U_{\tau,r^2}} |U|^{-1} \| S_U f \|^4_{L^2(U)}.
\end{equation*}
The proof of \eqref{eq:GeneralizedKakeya} is complete.

\end{proof}

\subsection{A slicing argument}

In the following we prove the conical square function estimate for the degenerate cone:
\begin{equation*}
\mathcal{C} \gamma_k = \{ (\xi_1,\xi_1^k / \xi_3^{k-1},\xi_3 ) : |\xi_1| \leq 1, \quad \xi_3 \in [1/2,1] \}.
\end{equation*}
$\Theta_\delta$ denotes the canonical covering of the $\delta$-neighbourhood introduced previously.
%This is after Taylor expansion and invoking a Pramanik--Seeger \cite{PramanikSeeger2007,BourgainDemeter2015} argument seen to be equivalent to the cone $\Gamma_4$ from the previous section. 
\smallskip

Instead of following the roadmap from Guth--Wang--Zhang \cite{GuthWangZhang2020}, we rely on pigeonholing, rescaling, and the stability result for Kakeya estimates for non-\-dege\-nerate cones discussed above. This incurs a logarithmic loss compared to the square function estimate for degenerate curves, but already for the circular cone it is currently not clear whether the loss can be removed.

\smallskip

This corresponds to the observation recorded in Remark \ref{rem:DifferentScales} that we do not expect the generalized Kakeya estimate to hold globally for cones over finite-type curves.
We have the following stability result of \cite[Theorem~1.1]{GuthWangZhang2020}:
\begin{theorem}
\label{thm:StabilitySquareFunction}
Let $f_2 \in C^2(-1,1)$ with
\begin{equation}
\label{eq:Regularity}
f_2'(0) = 0 \text{ and } \exists c_2, C_2: \forall \xi \in (0,1): \, c_2 \leq f_2''(\xi) \leq C_2. % \text{ and } |f^{(3)}(\xi)| \leq C_3.
\end{equation}
Let $\mathcal{C} \gamma_2 = \{ \omega_3 (\omega_1/\omega_3,f_2(\omega_1/\omega_3),1) : \, \omega_1 \in [-1,1], \; \omega_3 \in [1/2,1] \}$ denote the cone with base curve $\gamma_2 = \{(\xi, f_2(\xi) \}$. Let $f \in \mathcal{S}(\R^3)$ with $\text{supp}(\hat{f}) \subseteq \mathcal{N}_{R^{-1}}(\mathcal{C} \gamma_2)$. Then the following estimate holds:
\begin{equation*}
\| f \|_{L^4(\R^3)} \lesssim_{\varepsilon,c_2,C_2} R^\varepsilon \big\| \big( \sum_{\theta \in \Theta_{R^{-1}}} |f_{\theta}|^2 \big)^{\frac{1}{2}} \big\|_{L^4(\R^3)}.
\end{equation*}
\end{theorem}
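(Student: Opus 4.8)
The final statement to prove is Theorem~\ref{thm:StabilitySquareFunction}, the stability result for non-degenerate cones with $C^2$ base curve satisfying uniform convexity bounds. This is quoted from Guth--Wang--Zhang, so the plan is really to recall the structure of their argument and indicate how the hypotheses \eqref{eq:Regularity} suffice.

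\medskip

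\emph{The plan is to follow the High--Low multiscale scheme of \cite{GuthWangZhang2020}.} First I would reduce, by a partition of unity and finite decomposition, to the case where the base curve is a small perturbation of the parabola $\xi\mapsto\xi^2$, so that after an affine change of variables one may assume $f_2(0)=f_2'(0)=0$ and $f_2''\sim 1$ on the relevant piece. The core of the proof is the Kakeya-type estimate for the wave envelopes / planks $\tilde\theta=\theta-\theta$: one wants
\[
\int_{\R^3}\Big(\sum_{\theta\in\Theta_{R^{-1}}}|f_\theta|^2\Big)^2
\lesssim \sum_{R^{-1/2}\le s\le 1}\;\sum_{\tau\in\Theta_{s^2}}\;\sum_{U\parallel U_{\tau,R}}|U|^{-1}\|S_Uf\|_{L^2(U)}^4,
\]
which is exactly the $k=2$ specialization of Proposition~\ref{prop:GeneralizedKakeya} — and for $k=2$ that proposition holds \emph{without} the logarithmic loss (see the Remark after Proposition~\ref{prop:GeneralizedKakeya} and Remark~\ref{rem:DifferentScales}(2)), since the base curve has comparable curvature everywhere and the pigeonholing into curvature-sectors is unnecessary. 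So I would invoke Proposition~\ref{prop:GeneralizedKakeya} directly with $k=2$, tracking that its proof only used the derivative bounds \eqref{eq:DerivativeBounds}, which for $k=2$ are exactly \eqref{eq:Regularity}.

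\medskip

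\emph{Next I would convert the wave-envelope estimate into the $L^4$ square function estimate.} The passage from the Kakeya estimate above to the square function bound is the standard broad/narrow plus induction-on-scale argument of Guth--Wang--Zhang: one writes $\|f\|_{L^4}^4=\|(\sum_\theta f_\theta)^2\|_{L^2}^2$, expands the square, and separates the diagonal-ish (narrow) contribution — where few plates interact and Cauchy--Schwarz is essentially lossless — from the transverse (broad) contribution, which is estimated by the wave envelope inequality after rescaling each $\tau$ at scale $s^2$ to unit scale. The rescaling is affine-invariant enough that the constants depend only on $c_2,C_2$; summing the geometric-type series in $s$ with the losses $R^\varepsilon$ closes the induction. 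I would emphasize that at every step the only geometric input is: (i) the canonical plates linearize the curve at scale $R^{-1}$ with error $R^{-1}$, and (ii) the uniform curvature bounds \eqref{eq:Regularity}, both of which hold under the stated hypotheses, so the Guth--Wang--Zhang argument applies \emph{mutatis mutandis}.

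\medskip

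\emph{The main obstacle} I expect is purely bookkeeping: verifying that the nonconstant but comparable curvature $f_2''\sim 1$ does not disturb the exact almost-orthogonality and nesting properties $\Theta(\sigma,\xi)\subseteq M\Theta(\sigma,\xi')$ for neighbouring base points, which are used throughout the High--Low step. Since for $k=2$ the curvature never degenerates, the perturbative Taylor-expansion estimates of Lemma~\ref{lem:ComparabilityTheta} and Lemma~\ref{lem:TwoEndsRepresentation} go through uniformly, and there is no loss from the dyadic pigeonholing into curvature sectors — so ultimately this theorem is just the $k=2$ case of the analysis already carried out, combined with the standard reduction of a square function estimate to a wave envelope estimate. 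The cleanest exposition is therefore to cite \cite[Theorem~1.1]{GuthWangZhang2020} for the reduction and note that their hypotheses are precisely \eqref{eq:Regularity}.
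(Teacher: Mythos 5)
Your proposal matches the paper's (implicit) approach: the paper does not write out a proof of Theorem \ref{thm:StabilitySquareFunction} but presents it as the stability of \cite[Theorem~1.1]{GuthWangZhang2020}, with the required Kakeya input supplied by Proposition \ref{prop:GeneralizedKakeya} in the case $k=2$ (without logarithmic loss, cf.\ Remark \ref{rem:DifferentScales}(2)) and the remainder following the Guth--Wang--Zhang wave-envelope/induction-on-scales machinery. Your outline identifies exactly these ingredients, including the role of the uniform curvature bounds \eqref{eq:Regularity} under Lorentz rescaling, so it is correct and essentially the same route (modulo the minor point that the Guth--Wang--Zhang argument is an induction on a two-scale quantity rather than a broad/narrow decomposition).
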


\smallskip

Now we establish the square function estimate for the degenerate cone $\mathcal{C} \gamma_k$, $k \geq 3$. 

\begin{proof}[Proof~of~Theorem~\ref{thm:SquareFunctionEstimateDegenerateCone}]

To apply Theorem \ref{thm:StabilitySquareFunction}, we carry out a dyadic pigeonholing into sectors. Let $M \in 2^{\Z} \cap [R^{-\frac{1}{k}},1]$, and let
\begin{equation*}
\nu_M = \{ (\xi_1,\xi_2,\xi_3) \in \mathcal{C} \gamma_k : \big| \frac{\xi_1}{\xi_3} - \nu \big| \lesssim M \}.
\end{equation*}
Additionally, we let
\begin{equation*}
\nu_0 = \{ (\xi_1,\xi_2,\xi_3) \in \mathcal{C} \gamma_k : \big| \frac{\xi_1}{\xi_3} \big| \lesssim R^{-\frac{1}{k}} \},
\end{equation*}
which contains $\mathcal{O}(1)$-sectors $\theta \in \Theta_{R^{-1}}$ close to the origin.

By dyadic pigeonholing, it suffices to establish a square function estimate for one of the dyadic regions defined above. Let $M \in (2^{\Z} \cap [R^{-\frac{1}{k}},1]) \cup \{ 0 \}$ such that
\begin{equation}
\label{eq:PigeonholingSquareFunction}
\| f \|_{L^4(\R^3)} \lesssim \log(R) \| f_M \|_{L^4(\R^3)}
\end{equation}
with $f_M$ denoting the Fourier projection of $f$ to the $R^{-1}$-neighbourhood of $\nu_M$.

Clearly, for $M \lesssim R^{-\frac{1}{k}}$, since only finitely many $\theta \in \Theta_{R^{-1}}$ are contributing, it is immediate from the Cauchy-Schwarz inequality that
\begin{equation}
\label{eq:SFEstimateSmallM}
\| f_M \|_{L^4(\R^3)} \lesssim \big\| \big( \sum_{\theta \in \Theta_{R^{-1}}} |f_{\theta}|^2 \big)^{\frac{1}{2}} \big\|_{L^4(\R^3)}.
\end{equation}

\smallskip

We turn to the case $M \gg R^{-\frac{1}{k}}$: The key tool will be a generalized Lorentz rescaling, mapping $\nu_M$ to a full non-degenerate cone $\Gamma_2'$. By finite subdivision we can suppose that the coordinates for $\nu_M$ satisfy for $M = c \nu$ with $c \ll 1$ to be chosen later:
\begin{equation}
\label{eq:SectorNu}
\big| \frac{\xi_1}{\xi_3} - \nu \big| \leq M.
\end{equation}

\begin{lemma}
Let $M \in [R^{-\frac{1}{k}},1] \cap 2^{\Z}$. There is a linear map $\Lambda_\nu : \nu_M \to \Gamma_2'$, which maps the sector $\nu_M$ to a non-degenerate cone
\begin{equation*}
\Gamma_2' = \{ \omega_3 \cdot (\omega_1/\omega_3,f_2(\omega_1/\omega_3),1) : |\omega_1| \leq 1, \, \omega_3 \in [1/2,1] \}
\end{equation*}
with base curve given by
\begin{equation*}
f_2(\omega_1) = \omega_1^2 + \sum_{\ell=3}^k d_{k,\ell} c^{\ell-2} \omega_1^{\ell},
\end{equation*}
which satisfies \eqref{eq:DerivativeBounds} uniform in $M$.
$\Lambda_{\nu}$ maps the $R^{-1}$-neighbourhood of $\mathcal{C} \gamma_k$ to the $M^{-k} R^{-1}$-neighbourhood of $\Gamma_2'$.

 Secondly, the sectors at scale $\delta$ contained in $\tau$, which satisfy
\begin{equation*}
\big| \frac{\xi_1}{\xi_3} - \nu_{\theta} \big| \leq \big( \frac{\delta}{M^{k-2}} \big)^{\frac{1}{2}}
\end{equation*}
are mapped to $\delta/M^k$-sectors of $\Gamma_2'$:
\begin{equation*}
\big| \frac{\xi_1'}{\xi_3'} - \nu_{\theta'} \big| \leq \big( \frac{\delta}{M^k} \big)^{\frac{1}{2}}.
\end{equation*}
\end{lemma}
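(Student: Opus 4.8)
The plan is to realise $\Lambda_\nu$ as a \emph{generalised Lorentz rescaling}: the Taylor expansion of $t\mapsto t^k$ recentred at $t=\nu$, then renormalised. Parametrise a point of $\mathcal{C}\gamma_k$ lying over the sector by $\xi_3(t,t^k,1)$ with $t=\xi_1/\xi_3$, and introduce the slope variable $u$ via $t=\nu(1+cu)$, so that by \eqref{eq:SectorNu} $u$ runs over $[-1,1]$ precisely when $(\xi_1,\xi_2,\xi_3)\in\nu_M$. The binomial identity $(1+x)^k-k(1+x)+(k-1)=\sum_{j=2}^{k}\binom{k}{j}x^j$, which has a double zero at $x=0$, evaluated at $x=cu$ gives
\[
t^k-k\nu^{k-1}t+(k-1)\nu^k=\binom{k}{2}\nu^k c^2\,f_2(u),\qquad f_2(u)=u^2+\sum_{\ell=3}^{k}d_{k,\ell}\,c^{\ell-2}u^{\ell},\quad d_{k,\ell}=\binom{k}{\ell}/\binom{k}{2}.
\]
This dictates the (linear, explicitly given) map
\[
\Lambda_\nu(\xi_1,\xi_2,\xi_3)=\Big(\tfrac{\xi_1-\nu\xi_3}{\nu c},\ \tfrac{\xi_2-k\nu^{k-1}\xi_1+(k-1)\nu^k\xi_3}{\binom{k}{2}\nu^k c^2},\ \xi_3\Big),
\]
which is homogeneous of degree one, hence carries cones to cones. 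A one-line substitution gives $\Lambda_\nu(\xi_3(t,t^k,1))=\xi_3(u,f_2(u),1)$, so $\Lambda_\nu$ maps $\nu_M$ onto $\Gamma_2'$.

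It then remains to check three things. \emph{(i) Regularity of $f_2$.} Since the $d_{k,\ell}$ are absolute constants, $f_2(0)=f_2'(0)=0$ and $f_2''(u)=2+O(c)$ on $[-1,1]$; choosing the still-free constant $c=c(k)$ small enough forces $f_2''\in[1,3]$, i.e.\ $f_2$ satisfies \eqref{eq:Regularity} (equivalently the $k=2$ instance of \eqref{eq:DerivativeBounds}) with constants independent of $\nu$, hence of $M$. This is where the normalisation ``$M=c\nu$, $c\ll1$'' is consumed. \emph{(ii) Neighbourhood distortion.} Every entry of $\Lambda_\nu$ is $\lesssim\nu^{-k}\sim M^{-k}$ (with $c$ now fixed, so $\nu\sim M$; the largest is the $(2,2)$-entry $(\binom{k}{2}\nu^k c^2)^{-1}$), hence $\|\Lambda_\nu\|_{\mathrm{op}}\lesssim M^{-k}$. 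Writing a point $p\in\mathcal{N}_{R^{-1}}(\mathcal{C}\gamma_k)$ over the sector as $p=q+e$ with $q\in\nu_M$ and $e$ normal to the cone at $q$, $|e|\le R^{-1}$, gives $\Lambda_\nu q\in\Gamma_2'$ and $\mathrm{dist}(\Lambda_\nu p,\Gamma_2')\le|\Lambda_\nu e|\lesssim M^{-k}R^{-1}$, which is the claimed image neighbourhood; the implicit constant is harmless, as it merely rescales the parameter fed into Theorem \ref{thm:StabilitySquareFunction}, and the reverse containment (used to transport the resulting square function estimate back) follows the same way from invertibility of $\Lambda_\nu$. \emph{(iii) Sub-sector scaling.} From $\Lambda_\nu$ one reads off $\xi_1'/\xi_3'=u=(\xi_1/\xi_3-\nu)/M$, so a $\delta$-sector $|\xi_1/\xi_3-\nu_\theta|\le(\delta/M^{k-2})^{1/2}$ is sent to $|\xi_1'/\xi_3'-\nu_{\theta'}|\le M^{-1}(\delta/M^{k-2})^{1/2}=(\delta/M^k)^{1/2}$ with $\nu_{\theta'}=(\nu_\theta-\nu)/M$, which is exactly the asserted rescaling $\delta\mapsto\delta/M^k$ of the tangential scale.

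Every computation here is elementary; the only points needing care are the bookkeeping of the powers of $\nu$ (and of the fixed constant $c$) so that all bounds are genuinely uniform in $M$, and the routine verification that $c$ may be chosen depending only on $k$ so that the perturbed base curve $f_2$ retains non-degenerate curvature on all of $[-1,1]$. I do not foresee a real obstacle.
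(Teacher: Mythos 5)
Your proposal is correct and takes essentially the same route as the paper: the substitution $t=\nu(1+cu)$ is exactly the paper's affine change of variables $\xi_1'=M^{-1}(\xi_1-\nu\xi_3)$ with $M=c\nu$, and the binomial identity you invoke is precisely the paper's expansion of $(M\xi_1'+\nu\xi_3)^k/\xi_3^{k-1}$ followed by stripping the $\ell=0,1$ terms and rescaling by $(\binom{k}{2}M^2\nu^{k-2})^{-1}\sim M^{-k}$. The only difference is that you spell out the explicit matrix for $\Lambda_\nu$ and write down the bookkeeping for the neighbourhood distortion and sub-sector scaling that the paper calls ``straight-forward.''
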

\begin{proof}
We define $\xi_1' = M^{-1}(\xi_1 - \nu \xi_3)$, $\xi_3' = \xi_3$, which transforms \eqref{eq:SectorNu} to
\begin{equation*}
\big| \frac{\xi_1'}{\xi_3'} \big| \leq 1.
\end{equation*}
We compute the effect for the degenerate cone:
\begin{equation*}
\tau' = \frac{(M \xi_1' + \nu \xi_3)^k}{\xi_3^{k-1}} = \sum_{\ell=0}^k {k \choose l} \frac{M^l \nu^{k-\ell} (\xi_1')^{\ell}}{\xi_3^{\ell-1}}.
\end{equation*}
By a linear transformation $\tau' \to \tau' + \text{Lin}(\xi_1',\xi_3')$, we find
\begin{equation*}
\tau' \to \tau' = \sum_{\ell=2}^k {k \choose l} \frac{M^l \nu^{k-\ell} (\xi_1')^{\ell}}{\xi_3^{\ell-1}}.
\end{equation*}
We rescale now with $\big( {k \choose 2} M^2 \nu^{k-2} \big)^{-1} \sim M^{-k}$ to find
\begin{equation*}
\tau' = \frac{(\xi_1')^2}{\xi_3} + \sum_{\ell=3}^k \underbrace{\big[ {k \choose \ell} / {k \choose 2} \big]}_{d_{k,\ell}} c^{\ell - 2} \frac{(\xi_1')^{\ell}}{\xi_3^{\ell-1}}.
\end{equation*}
We have obtained a linear transformation $\Lambda_\nu : \nu_M \to \Gamma_2'$, $(\xi_1,\tau,\xi_3) \mapsto (\xi_1',\tau',\xi_3')$ which maps the sector to a non-degenerate cone:
\begin{equation*}
\Lambda_\nu(\nu_M) = \Gamma_2' = \{ \omega_3 \cdot (\omega_1 / \omega_3, f_2(\omega_1 / \omega_3), 1) : \, |\omega_1| < 1, \; \omega_3 \in [1/2,1] \}
\end{equation*}
with base curve given by
\begin{equation*}
f_2(\omega_1) = \omega_1^2 + \sum_{\ell=3}^k d_{k,\ell} c^{\ell-2} \omega_1^{\ell}.
\end{equation*}
Choosing $c=c(k) \ll 1$ we find that \eqref{eq:Regularity} holds. Moreover, $\Lambda_\nu$ maps the $R^{-1}$-neighbourhood of $\mathcal{C} \gamma_k$ to the $M^{-k} R^{-1}$-neighbourhood of $\Gamma_2'$. Verifying the correspondence of the $\delta$-sectors $\nu_{\theta}$ contained in $\nu_M$ to the $\delta/M^k$-sectors of $\Gamma_2'$ is straight-forward.
\end{proof}

% Secondly, the sectors at scale $\delta$ contained in $\tau$
%\begin{equation*}
%\big| \frac{\xi_1}{\xi_3} - \nu_{\theta} \big| \leq \big( \frac{\delta}{M^{k-2}} \big)^{\frac{1}{2}}
%\end{equation*}
%are mapped to $\delta/M^k$-sectors of $\Gamma_2'$:
%\begin{equation*}
%\big| \frac{\xi_1'}{\xi_3} - \nu_{\theta'} \big| \leq \big( \frac{\delta}{M^k} \big)^{\frac{1}{2}}.
%\end{equation*}

Let $h_M$ denote the function obtained from pulling back the Fourier transform of $f$, which satisfies $\text{supp}(\hat{h}_M) \subseteq \mathcal{N}_{C M^{-k} R^{-1}}(\Gamma_2')$ and let $J(M)$ denote the Jacobian from the change of variables. We can apply Theorem \ref{thm:StabilitySquareFunction} to find
\begin{equation}
\label{eq:RescaledSquareFunction}
\begin{split}
\| f_M \|_{L^4(\R^3)} &= J(M) \| h_M \|_{L^4(\R^3)} \\
&\lesssim_\varepsilon R^\varepsilon J(M) \big\| \big( \sum_{\theta \in \Theta_{2,M^{-k} R^{-1}}} |h_{M,\theta}|^2 \big)^{\frac{1}{2}} \big\|_{L^4(\R^3)} \\
&\lesssim_\varepsilon R^\varepsilon \big\| \big( \sum_{\theta \in \Theta_{k,R^{-1}}} |f_{\theta}|^2 \big)^{\frac{1}{2}} \big\|_{L^4(\R^3)}.
\end{split}
\end{equation}

Taking \eqref{eq:PigeonholingSquareFunction}, \eqref{eq:SFEstimateSmallM}, and \eqref{eq:RescaledSquareFunction} together, we find
\begin{equation*}
\| f \|_{L^4(\R^3)} \lesssim_\varepsilon \log(R) R^\varepsilon \big\| \big( \sum_{\theta \in \Theta_{R^{-1}} } |f_{\theta}|^2 \big)^{\frac{1}{2}} \big\|_{L^4(\R^3)},
\end{equation*}
which completes the proof.

\end{proof}

\section{A square function estimate for the complex cone}
\label{section:SquareFunctionComplexCone}

The argument presented above to show the Kakeya estimate allows for generalization to the complex cone:
\begin{equation*}
\C \Gamma_2 = \{ h \cdot (z/h, (z/h)^2 , 1 ) \in \C \times \C \times \R : h \in [1/2,1], \; z \in \C: \, |z| \leq 1 \}.
\end{equation*}
By the canonical identification $\C \equiv \R^2$ this can be regarded as cone in $\R^5$:
\begin{equation*}
\C \Gamma_2 = \{ h \cdot (s/h,t/h,(s^2-t^2)/h^2, 2st / h^2 , 1 ) \in \R^5 :\, h \in [1/2,1], \, |(s,t)| \leq 1 \}.
\end{equation*}
In the following we will frequently identify complex numbers with elements of $\R^2$ by their real and imaginary part.

\subsection{Set-up}

We denote the base curve by
\begin{equation*}
\gamma_{2,\C}:[-1,1] \times [-1,1] \ni (s,t) \mapsto (s,t,s^2-t^2,2st) \in \R^4.
\end{equation*}
More concisely, we can express this as $\gamma_{2,\C}(z) =(z,z^2) \in \C^2$ for $z \in \C \equiv \R^2$. For $0 < \delta \ll 1$ we introduce the canonical covering of $\mathcal{N}_{\delta}(\gamma_{2,\C})$. Let $z=(s,t) \in \delta^{\frac{1}{2}} (\N_0 \times \N_0) \cap [0,1]^2$. We define
\begin{equation*}
\begin{split}
\theta_z &= \{ (s,t,s^2-t^2,2st) + \ell_1 (1,0,2s,2t) + \ell_2 (0,1,-2t,2s) \\
&\quad + c_1 r^{-2} (-2s,2t,1,0) + c_2 r^{-2} (-2t,-2s,0,1) \, : \\
&\quad \quad \ell_i \in [-d \delta^{\frac{1}{2}}, d \delta^{\frac{1}{2}}], \; c_i \in [-d \delta, d \delta ] \}
\end{split}
\end{equation*}
for some $d > 1$.

Let $\Theta_{\delta} = \{ \theta_z : z = (s,t) \in \delta^{\frac{1}{2}} \Z^2 \cap [-1,1]^2 \}$.
Clearly, the $\theta_z$ form a finitely overlapping cover of $\mathcal{N}_\delta(\gamma_{2,\C})$. In complex notation this can be expressed concisely as
\begin{equation*}
\theta_z = \{ \gamma_{2,\C}(z) + \ell \dot{\gamma}_{2,\C}(z) + c \wedge (\dot{\gamma}_{2,\C}(z)): \, \ell, c \in \C, \, |\ell| \leq d \delta^{\frac{1}{2}}, \; | c | \leq d \delta \} \subseteq \C^2.
\end{equation*}
We denote with $\wedge (a,b) = -(\overline{b},\overline{a})$ for $(a,b) \in \C^2$ the vector spanning the ``complex" orthogonal complement.

The following is a consequence of complexification of the ``real" C\'ordoba--Fefferman square function estimate and was proved by Biggs--Brandes--Hughes \cite{BiggsBrandesHughes2022}:
\begin{proposition}
Let $F \in \mathcal{S}(\R^4)$ with $\text{supp}(\hat{F}) \subseteq \mathcal{N}_\delta(\gamma_{2,\C})$. Then the following estimate holds:
\begin{equation*}
\| F \|_{L^4(\R^4)} \lesssim \big\| \big( \sum_{\theta \in \Theta_\delta} |F_\theta|^2 \big)^{\frac{1}{2}} \big\|_{L^4(\R^4)}.
\end{equation*}
\end{proposition}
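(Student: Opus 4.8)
The plan is to complexify the classical C\'ordoba--Fefferman biorthogonality argument, replacing the real parameter $\xi\in\R$ by $z\in\C$ throughout. Since the $F_\theta$ comprise a partition of unity on $\mathcal N_\delta(\gamma_{2,\C})$, Plancherel's theorem gives
\[
\|F\|_{L^4(\R^4)}^4=\big\|\big(\textstyle\sum_\theta F_\theta\big)^2\big\|_{L^2(\R^4)}^2=\sum_{\theta_1,\ldots,\theta_4\in\Theta_\delta}\int_{\R^4}\big(\hat F_{\theta_1}\ast\hat F_{\theta_2}\big)\,\overline{\big(\hat F_{\theta_3}\ast\hat F_{\theta_4}\big)},
\]
and a quadruple $(\theta_{z_1},\ldots,\theta_{z_4})$ contributes only if $(\theta_{z_1}+\theta_{z_2})\cap(\theta_{z_3}+\theta_{z_4})\neq\emptyset$, i.e.\ there exist $\eta_i\in\theta_{z_i}$ with $\eta_1+\eta_2=\eta_3+\eta_4$ in $\C^2$. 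The target is \emph{essential biorthogonality}: every contributing quadruple satisfies $\theta_{z_1}\subseteq C\theta_{z_3}$ and $\theta_{z_2}\subseteq C\theta_{z_4}$, or $\theta_{z_1}\subseteq C\theta_{z_4}$ and $\theta_{z_2}\subseteq C\theta_{z_3}$, for a fixed dilation constant $C$ depending only on $d$.

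For the geometric input I would first record that any $\eta\in\theta_z$ can be written $\eta=(w,w^2+e)$ with $w,e\in\C$, $|e|\lesssim\delta$ and $|w-z|\lesssim\delta^{1/2}$: the tangential displacement $m\cdot\dot\gamma_{2,\C}(z)=m(1,2z)$ with $|m|\leq d\delta^{1/2}$ contributes first coordinate $z+m$ (up to the $\mathcal O(\delta)$ normal part) $=:w$ and second coordinate $z^2+2zm+\mathcal O(\delta)=(z+m)^2+\mathcal O(\delta)=w^2+\mathcal O(\delta)$ --- the square being exactly the cancellation that makes the canonical rectangles tangent to $\gamma_{2,\C}$ to order $\delta$. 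Applying this to each $\eta_i$ and reading off the two $\C$-coordinates of $\eta_1+\eta_2=\eta_3+\eta_4$ yields
\[
w_1+w_2=w_3+w_4,\qquad w_1^2+w_2^2=w_3^2+w_4^2+\mathcal O(\delta),
\]
with all $|w_i|\lesssim1$ and $|w_i-z_i|\lesssim\delta^{1/2}$.

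The algebraic step is then formally identical to the classical one, now over $\C$. Put $S=w_1+w_2=w_3+w_4$, $P=w_1w_2$, $P'=w_3w_4$; the two identities give $P-P'=\tfrac12\big((w_3^2+w_4^2)-(w_1^2+w_2^2)\big)=\mathcal O(\delta)$. Hence, with $p(X)=X^2-SX+P=(X-w_1)(X-w_2)$, we get $|p(w_3)|=|(w_3^2-Sw_3+P')+(P-P')|=|P-P'|\lesssim\delta$, so $|w_3-w_1|\,|w_3-w_2|\lesssim\delta$ and therefore $w_3$ lies within $\mathcal O(\delta^{1/2})$ of $w_1$ or of $w_2$; since $w_4-w_j=w_i-w_3$ for the complementary indices, the other of $w_3,w_4$ is then $\mathcal O(\delta^{1/2})$-close to the remaining $w_j$. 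Combined with $|w_i-z_i|\lesssim\delta^{1/2}$, this gives $|z_1-z_3|\lesssim\delta^{1/2}$, $|z_2-z_4|\lesssim\delta^{1/2}$ (or the swapped pairing), which upon enlarging $C$ is exactly $\theta_{z_1}\subseteq C\theta_{z_3}$, $\theta_{z_2}\subseteq C\theta_{z_4}$. Note there is no discriminant-sign issue as in the real case: a monic quadratic over $\C$ always splits and is determined by $(S,P)$, so the complexification is, if anything, cleaner.

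Essential biorthogonality closes the estimate in the usual way: for fixed $(\theta_{z_1},\theta_{z_2})$ only $\mathcal O(1)$ pairs $(\theta_{z_3},\theta_{z_4})$ can contribute (those with $\theta_{z_3},\theta_{z_4}$ lying in the $C$-dilates of $\theta_{z_1},\theta_{z_2}$), so applying Cauchy--Schwarz to each summand, $|\langle \hat F_{\theta_1}\ast\hat F_{\theta_2},\hat F_{\theta_3}\ast\hat F_{\theta_4}\rangle|\leq\tfrac12\|F_{\theta_1}F_{\theta_2}\|_{L^2}^2+\tfrac12\|F_{\theta_3}F_{\theta_4}\|_{L^2}^2$, and summing yields $\|F\|_{L^4}^4\lesssim\sum_{\theta_1,\theta_2}\int|F_{\theta_1}|^2|F_{\theta_2}|^2=\big\|\big(\sum_\theta|F_\theta|^2\big)^{1/2}\big\|_{L^4}^4$; taking fourth roots finishes the proof. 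The only point requiring genuine care --- hence the main obstacle, though a mild one --- is the $\delta$-versus-$\delta^{1/2}$ bookkeeping in the geometric reduction: one must exploit that the $\theta_z$ are truly tangent to $\gamma_{2,\C}$, so that the tangential spread of size $\delta^{1/2}$ perturbs the ``vertical defect'' (second $\C$-coordinate minus square of the first) only by $\mathcal O(\delta)$, exactly as in the real model case; every other ingredient transfers formally from $\R$ to $\C$.
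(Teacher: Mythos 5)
Your proof is correct, and it takes the approach the paper attributes to Biggs--Brandes--Hughes (complexification of the C\'ordoba--Fefferman biorthogonality); the paper itself only cites the result and gives no proof. The one place worth double-checking is the reduction $\eta=(w,w^2+e)$ with $|e|\lesssim\delta$: writing $\eta=\gamma_{2,\C}(z)+\ell\dot\gamma_{2,\C}(z)+c\wedge\dot\gamma_{2,\C}(z)$ with $|\ell|\le d\delta^{1/2}$, $|c|\le d\delta$ gives $w=z+\ell-2c\bar z$ and $\eta_2=z^2+2\ell z-c$, and indeed $\eta_2-w^2=-c-( \ell-2c\bar z)^2+4cz\bar z=\mathcal O(\delta)$, so the bookkeeping works out exactly as you claim. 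The algebraic step via the monic quadratic $p(X)=(X-w_1)(X-w_2)$ and the observation $p(w_3)=P-P'$ is a clean way to phrase the classical biorthogonality, and it carries over verbatim to $\C$ since the modulus bound $|w_3-w_1|\,|w_3-w_2|\lesssim\delta$ forces one factor to be $\lesssim\delta^{1/2}$ with no real-vs-complex subtleties. The Cauchy--Schwarz closing step is standard. Nothing is missing.
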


\smallskip

We turn to the complex planks on the scale $R$ for $\C \Gamma_2$. Let $\mathcal{R}_{\C}(R^{-1}) = R^{-\frac{1}{2}} (\Z \times \Z) \cap [-1,1]^2$\footnote{The base points of $\mathcal{R}_{\C}(\delta)$ are corresponding to base points for a covering of the $\delta$-neighbourhood of $\gamma_{2,\C}$, like in Section \ref{section:SquareFunctionsCones}.}, which can be regarded as a subset of $\C$ as well.

Let $z= (s,t) \in \mathcal{R}_{\C}(R^{-1})$. The central line is given by
\begin{equation*}
\mathbf{c}(s,t)= (s,t,s^2-t^2,2st,1).
\end{equation*}
We define the (real) tangential vectors as
\begin{equation*}
\mathbf{t}_s(s,t) = (1,0,2s,2t,0), \quad \mathbf{t}_t(s,t) = (0,1,-2t,2s,0)
\end{equation*}
and the normal vectors as
\begin{equation*}
\mathbf{n}_s(s,t) = (-2s,2t,1,0,0), \quad \mathbf{n}_t(s,t) = (-2t,-2s,0,1,0), \quad \mathbf{n}_5 = (0,0,0,0,1).
\end{equation*}
Note that the complex derivative of the base curve $z \mapsto (z,z^2) \in \C$ is given by
$\dot{\gamma}_{2,\C}(z) = (1,2z) \in \C^2$. From this we can read off the tangential vectors as
\begin{equation*}
\begin{split}
\mathbf{t}_s(s,t) &= (\Re \dot{\gamma}_{2,\C}(z)_1, \Im \dot{\gamma}_{2,\C}(z)_1, \Re \dot{\gamma}_{2,\C}(z)_2, \Im \dot{\gamma}_{2,\C}(z)_2), \\
\mathbf{t}_t(s,t) &= (-\Im \dot{\gamma}_{2,\C}(z)_1, \Re \dot{\gamma}_{2,\C}(z)_1, - \Im \dot{\gamma}_{2,\C}(z)_2, \Re \dot{\gamma}_{2,\C}(z)_2).
\end{split}
\end{equation*}
Let $F: \C^2 \to \R^4$ denote the identification $(z_1,z_2) \mapsto (\Re z_1, \Im z_1, \Re z_2, \Im z_2)$. We clarify how linear combinations of $\mathbf{t}_s$ and $\mathbf{t}_t$ correspond to complex multiples of $(1,2z) \in \C^2$. Compute for $c+id \in \R + i \R$, $(1,2z) \in \C^2$:
\begin{equation*}
F((c+id)(1,2z)) = c F(1,2z) + d F(i(1,2z)) = c \mathbf{t}_s(z) + d \mathbf{t}_t(z).
\end{equation*}
This will be very useful to perceive the linear combinations $a \mathbf{t}_s(s,t) + b \mathbf{t}_t(s,t)$ for $|a|, |b| \lesssim M$ as complex multiple $\lambda \cdot (1,2z)$ with $\lambda \in \C$, $|\lambda| \lesssim M$. Further, we let
\begin{equation*}
\mathbf{n}_{\C}(z) = (-2 \bar{z},1) \in \C^2,
\end{equation*}
from which the real normal vectors can be read off: $\mathbf{n}_s = F(\mathbf{n}_{\C})$ and $\mathbf{n}_t  = F(i \mathbf{n}_{\C})$.

\smallskip

Clearly, $\{\mathbf{t}_s,\mathbf{t}_t,\mathbf{n}_x \}$, $x \in \{s,t,5\}$ are orthogonal and  we cover $\mathcal{N}_{R^{-1}}(\C \Gamma_2)$ with $\theta_z$, $z = s + it \in \mathcal{R}_{\C}(R^{-1})$ defined as follows:
\begin{equation*}
\begin{split}
\theta_z &= \{ a \mathbf{c}(s,t) + b_1 \mathbf{t}_s(s,t) + b_2 \mathbf{t}_t(s,t) + c_1 \mathbf{n}_s(s,t) + c_2 \mathbf{n}_t(s,t) + e \mathbf{n}_5 : \\
&\quad \frac{1}{2} \leq a \leq 1, \; b_i, c_i, e \in \R, \, |b_i| \leq d R^{-\frac{1}{2}}, \; |c_i|, |e| \leq d R^{-1}  \}
\end{split}
\end{equation*}
for some $d > 1$.

\smallskip

Let $\Theta_{R^{-1}} = \{ \theta_z : z \in \mathcal{R}_{\C}(R^{-1}) \}$. We recall the statement of Theorem \ref{thm:SquareFunctionComplexCone}:
\begin{theorem}
Let $F \in \mathcal{S}(\R^5)$ with $\text{supp}(\hat{F}) \subseteq \mathcal{N}_{R^{-1}}(\C \Gamma_2)$. Then the following estimate holds:
\begin{equation*}
\| F \|_{L^4(\R^5)} \lesssim_\varepsilon R^\varepsilon \big\| \big( \sum_{\theta \in \Theta_{R^{-1}}} |F_{\theta}|^2 \big)^{\frac{1}{2}} \big\|_{L^4(\R^5)}.
\end{equation*}
\end{theorem}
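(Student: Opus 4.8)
The plan is to carry over the wave-envelope/High--Low analysis of Guth--Wang--Zhang \cite{GuthWangZhang2020}, in the form of the local argument of Section \ref{section:SquareFunctionsCones}, from the real cone to $\C\Gamma_2$. The structural feature making this possible is that the complex base curve $\gamma_{2,\C}(z)=(z,z^2)$ has constant second derivative $\ddot\gamma_{2,\C}\equiv(0,2)$: from the viewpoint of the Kakeya geometry, $\C\Gamma_2$ behaves like the circular cone rather than like a degenerate cone, so --- as in Remark \ref{rem:DifferentScales}(2) and the case $k=2$ of Theorem \ref{thm:DegenerateSquareFunctionEstimateCurves} --- no pigeonholing into regions of comparable curvature is needed, and one expects a clean generalized Kakeya estimate. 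The factor $R^{\varepsilon}$ in the statement is the cost of the multiscale induction converting that Kakeya estimate into the square function estimate, exactly as in \cite{GuthWangZhang2020}.

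First I would introduce complex centred planks at scale $\sigma$: for $R\gg1$, $\delta=R^{-1}$, $\sigma\in 2^{\Z}\cap[R^{-1/2},1]$ and base points $z=(s,t)\in\mathcal{R}_{\C}(R^{-1}\sigma^{-2})$, define
\begin{equation*}
\Theta(\sigma,z)=\Big\{\, a\,\mathbf{c}(z)+b_1\mathbf{t}_s(z)+b_2\mathbf{t}_t(z)+c_1\mathbf{n}_s(z)+c_2\mathbf{n}_t(z)+e\,\mathbf{n}_5 \,:\ |a|\le\sigma^2,\ |(b_1,b_2)|\le CR^{-1/2}\sigma,\ |(c_1,c_2)|,|e|\le CR^{-1}\,\Big\},
\end{equation*}
that is, in complex notation, $a\,\mathbf{c}(z)$ plus the tangential shift $\ell\,\dot\gamma_{2,\C}(z)$ with $\ell=b_1+ib_2\in\C$, $|\ell|\le CR^{-1/2}\sigma$, plus a complex normal shift of size $\le CR^{-1}$ and an $\mathbf{n}_5$-shift of size $\le CR^{-1}$. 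At height $h\sim\sigma^2$ these planks form a canonical covering of $\mathcal{N}_{R^{-1}}(h\,\gamma_{2,\C})$, and at $\sigma\sim1$ they are mild dilates of the difference planks $\tilde\theta_z=\theta_z-\theta_z$. Setting $\mathbf{CP}_\sigma=\{\Theta(\sigma,z)\}$, $\Omega_{\le\sigma}=\bigcup\mathbf{CP}_\sigma$ and $\Omega_\sigma=\Omega_{\le\sigma}\setminus\Omega_{\le\sigma/2}$, Plancherel's theorem together with a dyadic partition of the frequency side reduces matters to bounding $\int_{\Omega_\sigma}\big|\sum_{\theta}\widehat{|F_\theta|^2}\big|^2$ for each $\sigma$, with the $\theta\in\Theta_{R^{-1}}$ sorted according to which $\Theta(\sigma,z)$ contains them --- precisely the scheme of the proof of Proposition \ref{prop:GeneralizedKakeya}.

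The crux, and the step I expect to be the main obstacle, is the complex analogue of Lemma \ref{lem:TwoEndsRepresentation} and Corollaries \ref{cor:StructureOmega1}--\ref{cor:StructureOmega2}: for $\omega\in\Omega_\sigma$ at height $\omega_5=h\ll\sigma^2$, the relevant part of each difference plank $\tilde\theta_z\cap\{\omega_5=h\}$ is concentrated near its two ``complex ends'' $\{h\gamma_{2,\C}(z)+\ell\,\dot\gamma_{2,\C}(z):|\ell|\sim R^{-1/2}\sigma\}$, thickened by $\sim R^{-1}$ in the normal and $\mathbf{n}_5$ directions, and these ends have $O(1)$ overlap, so up to a fixed dilation $\omega$ lies in exactly one $\Theta(\sigma,z)$. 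Since $\gamma_{2,\C}$ is quadratic the Taylor expansions comparing planks at neighbouring base points $z$, $w$ terminate, and the required separation reduces to the identity $z^2-w^2=(z-w)(z+w)$ --- fibrewise in $h$ this is the complex Córdoba--Fefferman computation underlying the Proposition quoted from \cite{BiggsBrandesHughes2022}. The genuinely new bookkeeping is that the tangential, normal and ``end'' directions are now two-real-dimensional, so the overlap of a two-parameter family of planks must be controlled, and one must check that the complex orthogonal complement $\wedge\dot\gamma_{2,\C}(z)$ together with $\mathbf{n}_5$ varies compatibly across neighbouring base points; both follow from the constancy of $\ddot\gamma_{2,\C}$. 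Granting this, finite overlap, the Cauchy--Schwarz inequality on $\Omega_\sigma$, and Plancherel's theorem yield the wave-envelope estimate
\begin{equation*}
\int_{\R^5}\Big(\sum_{\theta\in\Theta_{R^{-1}}}|F_\theta|^2\Big)^2\lesssim_\varepsilon R^{\varepsilon}\sum_{R^{-1/2}\le s\le1}\ \sum_{\tau\in\Theta_{s^2}}\ \sum_{U\parallel U_{\tau,R}}|U|^{-1}\,\|S_U F\|_{L^2(U)}^4,
\end{equation*}
the complex counterpart of Proposition \ref{prop:GeneralizedKakeya} (here in fact without the logarithmic loss, by the constant curvature, though this is harmless in view of the $R^{\varepsilon}$ below).

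Finally I would run the Guth--Wang--Zhang induction on scales. Writing $g=\sum_\theta|F_\theta|^2$ and splitting $g$ according to the dyadic regions $\Omega_\sigma$, the high-frequency contribution is controlled by the wave-envelope estimate just obtained, while the low-frequency contribution, after applying the parabolic dilation symmetry of $\C\Gamma_2$ (acting on $(z,z^2/h,h)$ in the evident way) to rescale it to unit frequency width, is an instance of the desired square function estimate at a smaller radius (self-similarity of the wave-envelope structure, cf.\ \cite{BeltranHickmanSogge2020}); together with the near-diagonal terms, absorbed by the complex Córdoba--Fefferman estimate (equivalently, by the finite overlap of the $\theta_z$ at scale $\sigma\sim1$), this closes the induction with the stated loss $R^{\varepsilon}$. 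This last part is the ``far less sophisticated'' induction alluded to in the introduction; all of its ingredients are complexifications of their real-variable counterparts, the only essentially new input being the local complex Kakeya estimate of the preceding paragraph.
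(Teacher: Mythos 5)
Your proposal follows the paper's strategy precisely: the new content is the local finite-overlap analysis for complex centred planks (exploiting that $\ddot\gamma_{2,\C}$ is constant so the Taylor expansions terminate), which gives the wave-envelope/Kakeya estimate, and the conversion to the square function bound then runs the Guth--Wang--Zhang induction via slicing in height, complex Lorentz rescaling, and the Biggs--Brandes--Hughes complex C\'ordoba--Fefferman estimate to jump-start. One small imprecision worth noting: the parenthetical equating the C\'ordoba--Fefferman base case with ``finite overlap of $\theta_z$ at scale $\sigma\sim 1$'' is not right --- the base case $S_K(1,K)\lesssim 1$ genuinely requires the biorthogonality argument on a thin slice $h\in I_K$ of width $1/K$ (so the extra height variable is frozen up to $O(K^{-1})$), not merely frequency-support overlap, which by itself would only give an $L^2$ bound.
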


The proof follows the roadmap from \cite{GuthWangZhang2020} with the crucial incidence estimates proved via the local arguments from Section \ref{section:SquareFunctionsCones}.

We turn to the notions involved in formulating the Kakeya estimate: In the following we consider $f \in \mathcal{S}(\R^5)$ with $\text{supp}(\hat{f}) \subseteq \mathcal{N}_{r^{-2}}(\C \Gamma_2)$ for notational convenience and for $\tau \in \Theta_{s^2}$, $1 \leq s \leq r^{-1}$ we consider
\begin{equation*} 
U_{\tau,r^2} = \text{conv} \big( \bigcup_{\theta \subseteq \tau} \theta^* \big),
\end{equation*}
and let for $U \parallel U_{\tau,r^2}$:
\begin{equation*}
S_U f = \big( \sum_{\theta \subseteq \tau} |f_{\theta}|^2 \big)^{\frac{1}{2}} \vert_U.
\end{equation*}

We consider the complexification of centred planks. 
For $\sigma \in [r^{-1},1] \cap 2^{\Z}$, we define for $z \in \mathcal{R}_{\C}(r^{-2} \sigma^{-2})$ the centred plank
\begin{equation*}
\begin{split}
\Theta(\sigma,z) &= \{ a \mathbf{c}(s,t) + b_1 \mathbf{t}_s(s,t) + b_2 \mathbf{t}_t(s,t) + c_1 \mathbf{n}_s(s,t) + c_2 \mathbf{n}_t(s,t) + e \mathbf{n}_5 : \\
&\quad - \sigma^2 \leq a \leq \sigma^2, \; b_i, c_i, e \in \R, \, |b_i| \leq D r^{-1} \sigma, \; |c_i|, |e| \leq D r^{-2}  \}
\end{split}
\end{equation*}
for some $D \gg d$ (say $d=3$, $D=30$).

\smallskip

Like above, we are guided by the idea that at the height $h \sim \sigma^2$ the intersection $\bigcup_z \Theta(\sigma,z) \cap \{ \omega_5 = h \}$ is supposed to canonically cover the $r^{-2}$-neighbourhood of the complex curve $h \gamma_{2,\C}(z)$.

\smallskip

We denote the collection of centred planks by
\begin{equation*}
\mathbf{CP}_\sigma = \{ \Theta(\sigma,z) \, : \, z \in \mathcal{R}_{\C}(r^{-2} \sigma^{-2} )\}
\end{equation*}
and, analogous to the previous section, for $2^{\Z} \ni \sigma > r^{-1}$:
\begin{equation*}
\Omega_{\sigma} = \bigcup \mathbf{CP}_{\sigma} \backslash \bigcup \mathbf{CP}_{\sigma/2},
\end{equation*}
and $\Omega_{r^{-1}} = \bigcup \mathbf{CP}_{r^{-1}}$. With $\mathbf{CP}_1$ corresponding to $\Theta_{r^{-2}}$, we have
\begin{equation}
\label{eq:DecompositionFourierSupportComplex}
\bigcup_{z \in \mathcal{R}_{\C}(r^{-2})} \tilde{\theta}(z) \subseteq \bigcup_{\sigma \in [r^{-1},1]} \Omega_{\sigma}.
\end{equation}

\subsection{A Kakeya estimate for the complex cone}

We can now formulate the Kakeya estimate for the complex cone:
\begin{proposition}
\label{prop:KakeyaComplexCone}
Let $r \gg 1$. The following estimate holds:
\begin{equation}
\label{eq:KakeyaComplexCone}
\int_{\R^5} \big| \sum_{\theta \in \Theta_{r^{-2}}} |f_{\theta}|^2 \big|^2 \lesssim \sum_{r^{-1} \leq s \leq 1} \sum_{\tau \in \Theta_{s^2}} \sum_{U \parallel U_{\tau,r^2}} |U|^{-1} \| S_U f \|^4_{L^2(U)}.
\end{equation}
\end{proposition}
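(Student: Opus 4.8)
The plan is to mimic the High--Low argument of Proposition~\ref{prop:GeneralizedKakeya}, replacing the real degenerate base curve by the complex parabola $\gamma_{2,\C}(z)=(z,z^2)$, and to exploit that the base curve now has genuinely \emph{constant} curvature in the complex sense, so that no curvature-pigeonholing and no logarithmic loss are needed. First I would introduce $g=\sum_{\theta\in\Theta_{r^{-2}}}|f_\theta|^2$ and apply Plancherel to write $\int |g|^2=\int_{\Omega_{\le 1}}|\hat g|^2$, using the decomposition \eqref{eq:DecompositionFourierSupportComplex} of the Fourier support of $g$ into the annuli $\Omega_\sigma$, $\sigma\in[r^{-1},1]\cap 2^{\Z}$. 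Then dyadically,
\begin{equation*}
\int_{\R^5}|g|^2=\sum_{r^{-1}\le\sigma\le 1}\int_{\Omega_\sigma}\Big|\sum_{\theta\in\Theta_{r^{-2}}}\big(|f_\theta|^2\big)\widehat{\;}(\omega)\Big|^2\,d\omega,
\end{equation*}
and for each $\sigma$ I would sort the $\theta=\theta(z')$ into the centred planks $\Theta(\sigma,z)\in\mathbf{CP}_\sigma$ according to which $z\in\mathcal{R}_{\C}(r^{-2}\sigma^{-2})$ is closest to $z'$ (the complex analogue of the association $\theta(\xi')\in\Theta(\sigma,\xi)$).

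The core of the argument is the finite-overlap statement: for $\omega\in\Omega_\sigma$, only $O(1)$ of the centred planks $\Theta(\sigma,z)$ (suitably dilated) contain $\omega$. I would prove this by slicing at a fixed height $\omega_5=h$. For $h\sim\sigma^2$, rescaling by $h^{-1}$ shows that $h^{-1}(\Theta(\sigma,z)\cap\{\omega_5=h\})$ is a mild dilation of the canonical covering $\theta_z$ of $\mathcal N_{r^{-2}\sigma^{-2}}(\gamma_{2,\C})$, which is finitely overlapping by construction. For $h\ll\sigma^2$, I would establish the complex analogue of Lemmas~\ref{lem:ComparabilityTheta} and \ref{lem:TwoEndsRepresentation}: a point $p=h\gamma_{2,\C}(\eta)+\ell\dot\gamma_{2,\C}(\eta)+\wedge(c\,r^{-2}\dot\gamma_{2,\C}(\eta))$ lying in $\Omega_\sigma\cap\{\omega_5=h\}$ must in fact have $|\ell|\sim r^{-1}\sigma$ (the ``ends''), since if $|\ell|\ll r^{-1}\sigma$ a Taylor expansion of $z\mapsto(z,z^2)$ — here only quadratic, hence exact — puts $p$ within $\mathcal N_{Cr^{-2}}(h\gamma_{2,\C})\subseteq\bigcup\mathbf{CP}_{\sigma/2}$, contradicting $p\in\Omega_\sigma$; and if $|\ell|\gg r^{-1}\sigma$ then $p$ is too far from $h\gamma_{2,\C}$ to lie in any $\Theta(\sigma,z)$. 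Then I would show the ends $\mathrm{RE},\mathrm{LE}$ of distinct $\Theta(\sigma,z),\Theta(\sigma,z')$ (now really a whole complex ``annulus'' of directions) are essentially disjoint: writing $z'=z+\Delta z$ with $|\Delta z|\gg r^{-1}\sigma^{-1}$, the displacement $\ell\,\Delta z\,\ddot\gamma_{2,\C}(z)=2\ell\,\Delta z$ has component of size $\gg r^{-2}$ in the normal direction $\wedge(\dot\gamma_{2,\C}(\eta))$, exactly as in \eqref{eq:TaylorRightEnds}. Crucially, because the base curve is $(z,z^2)$ with second derivative identically $2$, there is no interaction between ``regions of different curvature'' — the obstruction in Remark~\ref{rem:DifferentScales}(2) is absent — so the overlap estimate holds cleanly with no $\log r$ factor.

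Granting finite overlap, Cauchy--Schwarz gives, for each $\sigma$,
\begin{equation*}
\int_{\Omega_\sigma}\Big|\sum_{\Theta(\sigma,z)}\sum_{\theta\in\Theta(\sigma,z)}\big(|f_\theta|^2\big)\widehat{\;}\Big|^2\lesssim\sum_{\Theta(\sigma,z)\in\mathbf{CP}_\sigma}\int_{\R^5}\Big|\sum_{\theta\in\Theta(\sigma,z)}|f_\theta|^2\Big|^2,
\end{equation*}
the last step by Plancherel again. Letting $\Theta(\sigma,z)$ correspond bijectively to $\tau\in\Theta_{r^{-2}\sigma^{-2}}$ via base points, and noting $\sum_{\theta\in\Theta(\sigma,z)}|f_\theta|^2\le\sum_{\theta\subseteq S\tau}|f_\theta|^2$, I would sum over $\sigma$, substitute $s=(r\sigma)^{-1}$ so that $\Theta_{r^{-2}\sigma^{-2}}=\Theta_{s^2}$, and obtain
\begin{equation*}
\int_{\R^5}\Big|\sum_{\theta\in\Theta_{r^{-2}}}|f_\theta|^2\Big|^2\lesssim\sum_{r^{-1}\le s\le 1}\sum_{\tau\in\Theta_{s^2}}\int_{\R^5}\Big(\sum_{\theta\subseteq S\tau}|f_\theta|^2\Big)^2.
\end{equation*}
Finally I would localize the $\tau$-integral to translates of $U_{\tau,r^2}$ exactly as in \eqref{eq:PartitionEstimate}: pick $\eta_\tau$ equal to $1$ on $S\tilde\tau$ with $\check\eta_\tau$ supported on $U_{\tau,r^2}$ and $|\check\eta_\tau|\lesssim|\tau^*|^{-1}$, apply Plancherel and the uncertainty principle to break into $U\parallel U_{\tau,r^2}$, and bound each piece by $|U|^{-1}\|S_Uf\|_{L^2(U)}^4$. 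The main obstacle is the $h\ll\sigma^2$ overlap lemma in $\R^5$: one must verify that the two-real-dimensional family of ``ends'' of the complex centred planks is still finitely overlapping, i.e. that the complex-scalar Taylor/convexity estimates underlying Lemmas~\ref{lem:TwoEndsRepresentation} and \ref{lem:AuxOverlapII} survive when $\ell,c$ and the base-point increment $\Delta z$ range over $\C$ rather than $\R$; since $\gamma_{2,\C}$ is a genuine quadratic this reduces to linear algebra over $\C$ and should go through, but it is where all the geometry is and must be checked carefully.
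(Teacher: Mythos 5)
Your high-level scheme matches the paper's: Plancherel, the dyadic decomposition of the Fourier support into the annuli $\Omega_\sigma$, the assignment of each $\theta_z$ to a centred plank $\Theta(\sigma,z')$, a finite-overlap statement at each scale $\sigma$, and the uncertainty-principle localization to translates of $U_{\tau,r^2}$. You also correctly observe that no curvature-pigeonholing and no $\log r$ loss are needed here, since the base curve $(z,z^2)$ has constant (complex) curvature. But the finite-overlap step at heights $h\ll\sigma^2$ --- which you yourself flag as the crux --- is argued incorrectly, and the error is not cosmetic.

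You claim the ends of distinct centred planks $\Theta(\sigma,z)$, $\Theta(\sigma,z')$ with $|\Delta z|=|z'-z|\gg r^{-1}\sigma^{-1}$ are \emph{essentially disjoint}, because the term $\ell\,\Delta z\,\ddot\gamma_{2,\C}(z)=(0,2\ell\Delta z)$ has normal component $\gg r^{-2}$, ``exactly as in \eqref{eq:TaylorRightEnds}.'' This is false. In the real right-end-vs.-right-end argument behind \eqref{eq:TaylorRightEnds} the two quadratic contributions $\tfrac{h}{2}(\Delta\xi)^2\ddot\gamma$ and $\ell_1\Delta\xi\,\ddot\gamma$ have the \emph{same sign} (both $\ell_1>0$ and $\Delta\xi>0$), so they add and the normal displacement is genuinely $\gg r^{-2}$. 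Over $\C$ there is no such sign constraint: the Taylor identity separates into
\begin{equation*}
\ell=\ell_1+h\Delta z+\mathcal O(r^{-2}),\qquad
\Big(\tfrac{h\Delta z}{2}+\ell_1\Big)\Delta z=\mathcal O(r^{-2}),
\end{equation*}
and the second equation is \emph{satisfiable} with $|\ell_1|\sim r^{-1}\sigma$ and $|\Delta z|\gg r^{-1}\sigma^{-1}$: just take $\ell_1\approx -h\Delta z/2$, which by the first equation forces $\ell\approx h\Delta z/2$, so both $|\ell|,|\ell_1|\sim r^{-1}\sigma$ is consistent. The ends genuinely intersect. What the paper proves instead (Lemma \ref{lem:AlternatingEndsComplex}) is an \emph{alternating} constraint, $\ell\approx-\ell_1$ componentwise, i.e.\ $X(\ell)\sim\mu r^{-1}\sigma\Rightarrow X(\ell_1)\sim-\mu r^{-1}\sigma$ for $X\in\{\Re,\Im\}$; this shows that $\Delta z$ is determined up to $\mathcal O(r^{-1}\sigma^{-1})$ by $\ell$ and $h$, and two applications of the alternation rule out a third far-away $z''$ (Lemma \ref{lem:AuxOverlapIIComplex}). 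In short: your mechanism (disjointness of ends) does not survive complexification, and the correct mechanism is a sign-cancellation argument constraining the overlap rather than excluding it. Without this replacement the finite-overlap bound, and hence the proposition, is not established.
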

\begin{proof}
We use Plancherel's theorem and the decomposition \eqref{eq:DecompositionFourierSupportComplex} to write
\begin{equation*}
\int_{\R^5} \big| \sum_{\theta \in \Theta_{r^{-2}}} |f_{\theta}|^2 \big|^2 \leq \sum_{r^{-1} \leq \sigma \leq 1} \int_{\Omega_{\sigma}} \big| \sum_{\theta \in \Theta_{r^{-2}}} (|f_\theta|^2) \widehat{\,} (\omega) \big|^2.
\end{equation*}
Given $\sigma$, we associate the sectors $\theta_z \in \Theta_{r^{-2}}$ to centred planks according to the complex distance: For $z \in \mathcal{R}_{\C}(r^{-2})$ let $z' \in \mathcal{R}_{\C}(r^{-2} \sigma^{-2})$ with $|z-z'| \leq 2 r^{-1} \sigma^{-1}$ and write $\theta_z \in \Theta(\sigma,z')$.

\smallskip

It turns out that after changing to the complex description, the arguments from Section \ref{section:SquareFunctionsCones} can be applied to obtain the desired almost orthogonal decomposition at scale $\sigma$. Write for $\omega \in \Omega_\sigma$:
\begin{equation*}
\big| \sum_{\theta \in \Theta_{r^{-2}}} (|f_\theta|^2) \widehat{\,} (\omega) \big|^2 = \big| \sum_{\Theta(\sigma,z) \in \mathbf{CP}_{\sigma}} \sum_{\theta \in \Theta(\sigma,z)} \big( |f_{\theta}|^2 \big) \widehat{\,} (\omega) \big|^2,
\end{equation*}
and we shall show the finite overlap for $\Theta(\sigma,z)$:
\begin{equation}
\label{eq:FiniteOverlapComplex}
\big| \sum_{\Theta(\sigma,z) \in \mathbf{CP}_{\sigma}} \sum_{\theta \in \Theta(\sigma,z)} \big( |f_{\theta}|^2 \big) \widehat{\,}(\omega) \big|^2 \lesssim \sum_{\Theta(\sigma,z) \in \mathbf{CP}_{\sigma}} \big| \sum_{\theta \in \Theta(\sigma,z)} \big( |f_{\theta}|^2 \big) \widehat{\,}(\omega) \big|^2.
\end{equation}

In the first step to prove \eqref{eq:FiniteOverlapComplex} we show the following lemma:
\begin{lemma}
Let $\omega \in \Omega_{\sigma} \cap \tilde{\theta}(z)$. Then there is $\Theta(\sigma,z') \in \mathbf{CP}_{\sigma}$ with $\omega \in \Theta(\sigma,z)$ and $|z-z'| \leq 4 r^{-1} \sigma^{-1}$.
\end{lemma}

To this end, we employ the local analysis from the previous section. We analyze the set $\Omega_{\sigma} \cap \{ \omega_5 = h \}$ as before. For $\theta = \theta(z)$ with $z \in \mathcal{R}_{\C}(r^{-2})$ we can regard $\tilde{\theta}(z) \cap \{ \omega_5 = h \}$ as a complex rectangle centered at $h \gamma_{2,\C}(z)$ with complex length $r^{-1}$ into the tangential direction and $r^{-2}$ into the normal direction. Identifying $\R^4 \equiv \C^2$ we find
\begin{equation*}
\begin{split}
&\quad \pi_{12,\C}(\text{supp}(\mathcal{F}(|f_\theta|^2)) \cap \{ \omega_5 = h \}) \\
 &= \{ h \gamma_{2,\C}(z) + \ell \dot{\gamma}_{2,\C}(z) + c r^{-2} \mathbf{n}_{\C}(z) \, : \ell \in \C, c \in \C, |\ell| \leq d r^{-1}, \; |c| \leq d \}.
\end{split}
\end{equation*}
By $\pi_{12,\C}: \R^5 \equiv \C^2 \times \R \to \C^2$ we denote the projection onto the first two complex coordinates.

We have the following complex variant of Lemma \ref{lem:ComparabilityTheta}:
\begin{lemma}
Let $\sigma \in [r^{-1},1] \cap 2^{\Z}$, $h \leq \sigma^2$, and $|z| \leq 1/8$. Let
\begin{equation*}
p = h \gamma_{2,\C}(z) + \ell \dot{\gamma}_{2,\C}(z) + c r^{-2} \mathbf{n}_{\C}(z)
\end{equation*}
with $|\ell| \lesssim r^{-1} \sigma$, and $|c| \leq d$. Then we have
\begin{equation*}
p = h \gamma_{2,\C}(z') + \ell_1 \dot{\gamma}_{2,\C}(z') + C r^{-2} \mathbf{n}_{\C}(z')
\end{equation*}
for some $z' \in \mathcal{R}_{\C}(r^{-2} \sigma^{-2})$ with $|z-z'| \leq 2 r^{-1} \sigma^{-2}$, $\ell_1,C \in \C$ with $|\ell_1| \sim |\ell|$, and $|C| \leq C^*(d)$.
\end{lemma}
The lemma is proved like its real counterpart for $k=2$ via Taylor expansion, apart from the formal difference that the Taylor expansion is carried out in the complex plane. We omit the details to avoid repetition.

\smallskip

Next, we show the following extension of Lemma \ref{lem:TwoEndsRepresentation}:
\begin{lemma}
Let $h \leq \sigma^2$, $p \in \tilde{\theta}(z) \cap \{ \omega_5 = h \} \cap \Omega_{\sigma}$ and $\sigma \gg r^{-1}$. We have the representation
\begin{equation}
\label{eq:Rep(P)Complex}
p = h \gamma_{2,\C}(z') + \ell \dot{\gamma}_{2,\C}(z') + C r^{-2} \mathbf{n}_{\C}(z')
\end{equation}
with $z' \in \mathcal{R}_{\C}(r^{-2} \sigma^{-2})$, $|z-z'| \leq 2 r^{-1} \sigma^{-1}$, $|\ell| \lesssim r^{-1} \sigma$, and $C \in \C$ with $|C| \leq C^*$.

For $h \ll \sigma^2$ \eqref{eq:Rep(P)Complex} holds with $|\ell| \sim r^{-1} \sigma$.
\end{lemma}
\begin{proof}
Suppose we have the representation
\begin{equation}
\label{eq:Rep(P)DifferentLength}
p = h \gamma_{2,\C}(z) + \ell \dot{\gamma}_{2,\C}(z) + c r^{-2} \mathbf{n}_{\C}(z) = h \gamma_{2,\C}(z') + \ell_1 \dot{\gamma}_{2,\C}(z') + C r^{-2} \mathbf{n}_{\C}(z')
\end{equation}
for $p \in \Omega_\sigma \cap \{ \omega_5 = h \}$ with $|\ell| \gg r^{-1} \sigma$ and $|\ell_1| \lesssim r^{-1} \sigma$, $|c| \ll |C| \lesssim 1$.

\smallskip

We let $z' = z + \Delta z$. By Taylor expansion we find
\begin{equation}
\label{eq:TaylorExpansionComplex}
\begin{split}
&\quad h \gamma_{2,\C}(z + \Delta z) + \ell_1 \dot{\gamma}_{2,\C}(z + \Delta z) + C r^{-2} \mathbf{n}_{\C}(z + \Delta z) \\
&= h \gamma_{2,\C}(z) + h \Delta z \dot{\gamma}_{2,\C}(z) + \frac{h (\Delta z)^2}{2} \ddot{\gamma}_{2,\C}(z) + \ell_1 ( \dot{\gamma}_{2,\C}(z) + \Delta z \ddot{\gamma}_{2,\C}(z)) \\
&\quad + C r^{-2} \mathbf{n}_{\C}(z+\Delta z).
\end{split}
\end{equation}

Plugging \eqref{eq:TaylorExpansionComplex} into \eqref{eq:Rep(P)DifferentLength} and separating the first and second complex coordinate, this yields the conditions
\begin{equation*}
\left\{ \begin{array}{cl}
\ell &= \ell_1 + h \Delta z + \mathcal{O}(r^{-2}), \\
\big( \frac{h \Delta z}{2} + \ell_1 \big) \Delta z &= \mathcal{O}(r^{-2}).
\end{array} \right.
\end{equation*}

Since by assumption $|\ell| \gg |\ell_1|$, we have $|\ell| \sim h |\Delta z|$. The second identity implies $h |\Delta z|^2 = \mathcal{O}(r^{-2})$. If $h \sim \sigma^2$, then $|\Delta z | \lesssim r^{-1} \sigma^{-1}$. This in turn implies by the first identity
\begin{equation*}
|\ell| \lesssim |\ell_1| + h |\Delta z| + \mathcal{O}(r^{-2}) \lesssim r^{-1} \sigma,
\end{equation*}
which settles the case $h \sim \sigma^2$.

\smallskip

In the following suppose that $h \ll \sigma^2$. In this case we obtain from a Taylor expansion
\begin{equation*}
\begin{split}
h \gamma_{2,\C}(z + \frac{\ell}{h}) &= h \gamma_{2,\C}(z) + \ell \dot{\gamma}_{2,\C}(z) + \frac{h \ell^2}{2 h^2} (0,2) \\
&= h \gamma_{2,\C}(z) + \ell \dot{\gamma}_{2,\C}(z) + \mathcal{O}(r^{-2}).
\end{split}
\end{equation*}

This shows that $p$ given by the left hand side of \eqref{eq:Rep(P)DifferentLength} satisfies $p \in \mathcal{N}_{C r^{-2}}(h \gamma_{2,\C})$, but since $\mathcal{N}_{C r^{-2}}(h \gamma_{2,\C}) \subseteq \bigcup \mathbf{CP}_{\sigma/2}$, we obtain $p \notin \Omega_\sigma$, which contradicts our assumption.
\end{proof}

This concludes the first part of the proof of \eqref{eq:FiniteOverlapComplex}:
\begin{equation*}
\big| \sum_{\Theta(\sigma,z) \in \mathbf{CP}_{\sigma}} \sum_{\theta \in \Theta(\sigma,z)} \big( |f_{\theta}|^2 \big) \widehat{\,}(\omega) \big|^2 \lesssim \big| \sum_{\substack{\Theta(\sigma,z) \in \mathbf{CP}_{\sigma}, \\ \omega \in 4 \Theta(\sigma,z)}} \big| \sum_{\theta \in \Theta(\sigma,z)} \big( |f_{\theta}|^2 \big) \widehat{\,}(\omega) \big| \big|^2.
\end{equation*}

We note that for $h \ll \sigma^2$ the set $\Omega_{\sigma} \cap \{ \omega_5 = h \}$ essentially consists of the (complex) ends of $\Theta(\sigma,z) \cap \{ \omega_5 = h \}$, which are defined as follows:
\begin{equation*}
\text{Ends}(\Theta(\sigma,z),h) = \{ h \gamma_{2,\C}(z) + \ell \dot{\gamma}_{2,\C}(z) + C r^{-2} \mathbf{n}_{\C}(z) : \, \ell \in \C, \, |\ell| \sim r^{-1} \sigma, \, |C| \leq C^* \}.
\end{equation*}

To conclude \eqref{eq:FiniteOverlapComplex}, we require the following version of Lemma \ref{lem:AuxOverlapII}:
\begin{lemma}
\label{lem:AuxOverlapIIComplex}
Let $\omega \in \Omega_\sigma$. Then the following estimate holds:
\begin{equation*}
\# \{ \Theta(\sigma,\xi) : \omega \in 10 \Theta(\sigma,\xi) \} \lesssim 1.
\end{equation*}
\end{lemma}

We show the following:
\begin{lemma}
\label{lem:AlternatingEndsComplex}
Let $h \ll \sigma^2$ and $\sigma \gg r^{-1}$, let $X \in \{ \Re, \Im \}$, and $\mu \in \{1,-1 \}$. Let $p \in \Omega_{\sigma} \cap \{ \omega_5 = h \}$ and suppose that for $|\ell| \sim |\ell_1| \sim r^{-1} \sigma$, $|C|,|C_1| \lesssim 1$, and for $\Delta z = z'-z$, $|\Delta z | \gg r^{-1} \sigma^{-1}$, it holds
\begin{equation}
\label{eq:ComplexRep(P)}
h \gamma_{2,\C}(z) + \ell \dot{\gamma}_{2,\C}(z) + C r^{-2} \mathbf{n}_{\C}(z) = h \gamma_{2,\C}(z') + \ell_1 \dot{\gamma}_{2,\C}(z') + C _1r^{-2} \mathbf{n}_{\C}(z').
\end{equation}

\smallskip

Then we have
\begin{equation*}
X (\ell) \sim \mu r^{-1} \sigma \Rightarrow X(\ell_1 )  \sim - \mu r^{-1} \sigma.
\end{equation*}
\end{lemma}
\begin{proof}
We carry out a Taylor expansion of the right hand side of \eqref{eq:ComplexRep(P)} to obtain:
\begin{equation*}
0 = ( h \Delta z - \ell + \ell_1) \dot{\gamma}_{2,\C}(z) + ( \frac{h (\Delta z)^2}{2} + \ell_1 \Delta z) \ddot{\gamma}_{2,\C}(z) + \mathcal{O}(r^{-2}).
\end{equation*}
Taking the exterior product with $\dot{\gamma}_{2,\C}(z)$ or $\ddot{\gamma}_{2,\C}(z)$, we find
\begin{equation*}
\left\{ \begin{array}{cl}
h \Delta z - \ell + \ell_1 &= \mathcal{O}(r^{-2}), \\
(h (\Delta z)/2 + \ell_1 ) \Delta z &= \mathcal{O}(r^{-2}).
\end{array} \right.
\end{equation*}
The second identity gives by the minimum size assumption on $\Delta z$ that
\begin{equation*}
|h (\Delta z)/2 + \ell_1| \ll r^{-1} \sigma.
\end{equation*}
Plugging this into the first identity, we find
\begin{equation*}
|h (\Delta z)/2 - \ell| \ll r^{-1} \sigma
\end{equation*}
Consequently,
\begin{equation*}
|X(h (\Delta z)/2 - \ell) | \ll r^{-1} \sigma, \quad |X( h (\Delta z)/2 + \ell_1)| \ll r^{-1} \sigma.
\end{equation*}
This necessitates $X(\ell) \sim - X(\ell_1)$, which completes the proof.
\end{proof}

We can now conclude the proof of Lemma \ref{lem:AuxOverlapIIComplex}:

\begin{proof}[Proof~of~Lemma~\ref{lem:AuxOverlapIIComplex}]
Suppose that $h \sim \sigma^2$. Since $\mathbf{CP}_{\sigma} \cap \{ \omega_5 = h \}$ (after projection to $\C^2$) forms a canonical covering of the $r^{-2}$-neighbourhood of $h \gamma_{2,\C}$, the finite overlap is immediate.

\smallskip

We turn to $h \ll \sigma^2$: In this case it suffices to analyze the overlap of the ends of $\Theta(\sigma,\xi$. Since there are only finitely many $z' \in \mathcal{R}_{\C}(r^{-2} \sigma^{-2})$ such that $|z-z'| \lesssim r^{-1} \sigma^{-1}$ it suffices to check the overlap of the ends $\Theta(\sigma,z)$ and $\Theta(\sigma,z')$ with $|z-z'| \gg r^{-1} \sigma^{-1}$. So, suppose that $\pi_{12}(\omega) = p \in \text{End}(\Theta(\sigma,z),h) \cap \text{End}(\Theta(\sigma,z'),h)$ like in \eqref{eq:ComplexRep(P)}.
We can invoke Lemma \ref{lem:AlternatingEndsComplex} to find that for $X(\ell) \sim \mu r^{-1} \sigma$ with $\mu \in \{1,-1\}$ it holds $X(\ell_1) \sim - \mu r^{-1} \sigma$. Now suppose that there is a third $z'' \in \mathcal{R}_{\C}(r^{-2} \sigma^{-2})$ for which $p \in \text{Ends}(\Theta(\sigma,z''),h)$ such that we have the representations
\begin{equation*}
h \gamma_{2,\C}(z) + \ell \dot{\gamma}_{2,\C}(z) + C r^{-2} \mathbf{n}_{\C}(z) = h \gamma_{2,\C}(z'') + \ell_2 \dot{\gamma}_{2,\C}(z'') + C_2 r^{-2} \mathbf{n}_{\C}(z''),
\end{equation*}
and
\begin{equation*}
h \gamma_{2,\C}(z') + \ell_1 \dot{\gamma}_{2,\C}(z') + C_1 r^{-2} \mathbf{n}_{\C}(z') = h \gamma_{2,\C}(z'') + \ell_2 \dot{\gamma}_{2,\C}(z'') + C _2 r^{-2} \mathbf{n}_{\C}(z'').
\end{equation*}
Then two applications of Lemma \ref{lem:AlternatingEndsComplex} yield that
\begin{equation*}
|z'' - z | \lesssim r^{-1} \sigma^{-1} \text{ or } |z'' - z'| \lesssim r^{-1} \sigma^{-1}.
\end{equation*}
So, $z''$ must be either neighboring $z$ or $z'$, which completes the proof.
\end{proof}

Now we can conclude the proof of the Kakeya estimate for the complex cone:

\medskip

\emph{Conclusion~of~the~Proof~of~Proposition~\ref{prop:KakeyaComplexCone}}: We have proved \eqref{eq:FiniteOverlapComplex}, which yields
\begin{equation*}
\begin{split}
\int_{\Omega_{\sigma}} \big| \sum_{\theta \in \Theta_{r^{-2}}} |f_{\theta}|^2 \big|^2 &= \int_{\Omega_{\sigma}} \big| \sum_{\Theta(\sigma,z) \in \mathbf{CP}_{\sigma}} \sum_{\theta \in \Theta(\sigma,z)} |f_{\theta}|^2 \big|^2 \\
&\lesssim \int_{\Omega_{\sigma}} \sum_{\Theta(\sigma,z) \in \mathbf{CP}_{\sigma}} \big| \sum_{\theta \in \Theta(\sigma,z)} |f_{\theta}|^2 \big|^2.
\end{split}
\end{equation*}
This yields
\begin{equation*}
\int_{\R^5} \big| \sum_{\theta \in \Theta_{r^{-2}}} |f_{\theta}|^2 \big|^2 \lesssim \sum_{\sigma \in [r^{-1},1]} \sum_{\Theta(\sigma,z) \in \mathbf{CP}_{\sigma}} \int_{\R^5} \big| \sum_{\theta \in \Theta(\sigma,z)} |f_{\theta}|^2 \big|^2,
\end{equation*}
from which the right hand side of \eqref{eq:KakeyaComplexCone} follows by the same means as above, i.e., using the essentially constant property. We omit the details and refer to Section \ref{section:SquareFunctionsCones} to avoid repetition.
\end{proof}

\subsection{Induction-on-scales}

In this section we indicate how the Kakeya estimate from Proposition \ref{prop:KakeyaComplexCone} implies the square function estimate. This closely follows the roadmap from \cite{GuthWangZhang2020} and for the sake of brevity, we shall focus on defining and showing the analogs and extensions of the estimates used to carry out the induction-on-scales. With these at hand, the proof can be concluded like in \cite{GuthWangZhang2020}.

\smallskip

For $ r \geq 1$ let $\mathcal{U}_r$ denote a finitely overlapping covering of $\R^5$ with $r$-balls. Let $f \in \mathcal{S}(\R^5)$ with $\text{supp}(\hat{f}) \subseteq \mathcal{N}_{R^{-1}}(\C \Gamma_2)$. For $1 \leq r \leq R$ define the two-scale-quantity $S(r,R)$ as infimum over $C \geq 1$ such that:
\begin{equation}
\label{eq:DefinitionSComplex}
\sum_{B_r \in \mathcal{U}_r} |B_r|^{-1} \| S_{B_r} f \|^4_{L^2(B_r)} \leq C \sum_{R^{-1} \leq \sigma \leq 1} \sum_{\tau \in \Theta_{\sigma}} \sum_{U \parallel U_{\tau,R}} |U|^{-1} \| S_U f \|^4_{L^2(U)}.
\end{equation}
Like in \cite{GuthWangZhang2020}, Theorem \ref{thm:SquareFunctionComplexCone} is a consequence of $S(1,R) \lesssim_\varepsilon R^\varepsilon$.

We formulate the Kakeya estimate in terms of the two-scale-quantity:
\begin{proposition}
\label{prop:ComplexKakeyaTwoScale}
Let $1 \leq r \leq R \leq r^2$. Then the following estimate holds:
\begin{equation*}
S(r,R) \lesssim C.
\end{equation*}
\end{proposition}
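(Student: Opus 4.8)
The proposition asks us to establish the two-scale inequality $S(r,R)\lesssim C$ in the range $r\le R\le r^2$, where $C$ is the implicit constant from the Kakeya estimate in Proposition~\ref{prop:KakeyaComplexCone}. The key point is that when $R\le r^2$, the scale $r$ is already fine enough that we can directly apply the Kakeya estimate at scale $r$ (i.e., with $r^{-2}$ replaced by $r^{-1}$, or more precisely with the Fourier support in $\mathcal{N}_{R^{-1}}(\C\Gamma_2)\subseteq\mathcal{N}_{r^{-2}}(\C\Gamma_2)$) after a localization to $r$-balls. First I would fix an $r$-ball $B_r\in\mathcal{U}_r$ and observe that on $B_r$ the function $S_{B_r}f$ is, up to the uncertainty principle, controlled by a sum of wave packets adapted to the $\theta\in\Theta_{R^{-1}}$; the square $\big|\sum_\theta |f_\theta|^2\big|^2$ has Fourier support in the difference set $\bigcup\tilde\theta$, which by \eqref{eq:DecompositionFourierSupportComplex} is covered by $\bigcup_{r^{-1}\le\sigma\le1}\Omega_\sigma$.

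**Main steps.** (1) Localize: write $\sum_{B_r}|B_r|^{-1}\|S_{B_r}f\|_{L^2(B_r)}^4$ and pass to a Fourier-analytic expression. By the uncertainty principle, $\|S_{B_r}f\|_{L^2(B_r)}^2$ is comparable to $|B_r|$ times an average of $\sum_\theta|f_\theta|^2$ over a fattening of $B_r$; squaring and summing over $B_r$ gives a bound by $\int_{\R^5}\big(\sum_{\theta\in\Theta_{R^{-1}}}|f_\theta|^2\big)^2$ up to acceptable constants, using that the $B_r$ are finitely overlapping and that the integrand is essentially constant at scale $r$ on the relevant dual boxes (this is where $R\le r^2$ enters: the planks $\tilde\theta$ have dual boxes of diameter $\gtrsim r$ in the directions that matter). (2) Apply Proposition~\ref{prop:KakeyaComplexCone} with the role of ``$r$'' there played by the current $r$ (legitimate since $\mathrm{supp}(\hat f)\subseteq\mathcal{N}_{R^{-1}}(\C\Gamma_2)\subseteq\mathcal{N}_{r^{-2}}(\C\Gamma_2)$ when $R\ge r^{-2}$... in fact $R\le r^2$ so $R^{-1}\ge r^{-2}$, consistent). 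This yields
\[
\int_{\R^5}\Big|\sum_{\theta\in\Theta_{R^{-1}}}|f_\theta|^2\Big|^2\lesssim\sum_{r^{-1}\le s\le1}\sum_{\tau\in\Theta_{s^2}}\sum_{U\parallel U_{\tau,r^2}}|U|^{-1}\|S_Uf\|_{L^2(U)}^4.
\]
(3) Finally, relate the right-hand side to the right-hand side of \eqref{eq:DefinitionSComplex}: since $r\le R$, the covering $\{U\parallel U_{\tau,r^2}\}$ is a refinement of $\{U\parallel U_{\tau,R}\}$ (or coincides with it up to finite overlap when $R\le r^2$), and $\|S_Uf\|_{L^2(U)}^4$ summed over the finer $U$'s is controlled by the analogous sum over the coarser $U_{\tau,R}$-translates by Cauchy--Schwarz together with the essentially-constant property of $\sum_{\theta\subseteq\tau}|f_\theta|^2$ on such plates. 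The sum over scales $s$ matches (both run over $[r^{-1},1]$, and $\Theta_{s^2}$ is as in \eqref{eq:DefinitionSComplex} after relabeling $\sigma=s^2$). Collecting the constants gives $S(r,R)\lesssim C$.

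**Expected obstacle.** The routine-but-delicate part is the bookkeeping in step~(3): matching the plates $U_{\tau,r^2}$ to $U_{\tau,R}$ and verifying that no logarithmic loss appears in the range $R\le r^2$. One must check that the dual plate $U_{\tau,r^2}$ associated to $\tau\in\Theta_{s^2}$ at the two scales $r^2$ and $R$ are comparable (since $R\le r^2$, the plate at scale $R$ is longer, and $U_{\tau,r^2}$ tiles it with finitely overlapping translates; the essentially-constant property of $S_U f$ — which holds because $\sum_{\theta\subseteq\tau}|f_\theta|^2$ has Fourier support in a plank whose dual is $\approx U_{\tau,r^2}$ — then upgrades the finer sum to the coarser one). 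This is exactly the argument in \cite{GuthWangZhang2020} transcribed to the complex setting, and since Proposition~\ref{prop:KakeyaComplexCone} has already done the genuinely new work (the finite-overlap Lemma~\ref{lem:AuxOverlapIIComplex} via the alternating-ends Lemma~\ref{lem:AlternatingEndsComplex}), the present proposition should follow with no further conceptual input — the only care needed is not to lose the factor $C$ and to respect the constraint $R\le r^2$ that keeps every pigeonholing trivial.
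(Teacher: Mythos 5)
Your overall plan — Cauchy--Schwarz to remove the $B_r$ localization, then apply Proposition~\ref{prop:KakeyaComplexCone} — is the right one, and this is what the paper is implicitly asserting (following \cite{GuthWangZhang2020}). However, your step~(2) applies the Kakeya estimate at the wrong scale, and the justification you give for it actually goes the wrong way.

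Proposition~\ref{prop:KakeyaComplexCone} with parameter $r$ requires $\mathrm{supp}(\hat f)\subseteq\mathcal{N}_{r^{-2}}(\C\Gamma_2)$ and produces planks $\Theta_{r^{-2}}$ and plates $U_{\tau,r^2}$. In the present setting we have $\mathrm{supp}(\hat f)\subseteq\mathcal{N}_{R^{-1}}(\C\Gamma_2)$ and planks $\Theta_{R^{-1}}$, with $R\le r^2$. Then $R^{-1}\ge r^{-2}$, so $\mathcal{N}_{R^{-1}}\supseteq\mathcal{N}_{r^{-2}}$: the support hypothesis you need for the Kakeya estimate at scale $r^{-2}$ is \emph{not} implied by the given support condition — the inclusion in your parenthetical is backwards. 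Moreover, the $\theta$'s appearing in your step~(1) are at scale $R^{-1}$, not $r^{-2}$, so the left-hand side of the Kakeya inequality at scale $r$ would not even be the quantity you produced in step~(1). The correct choice is to apply Proposition~\ref{prop:KakeyaComplexCone} with parameter $\tilde r = R^{1/2}$: then $\tilde r^{-2}=R^{-1}$ matches the Fourier support exactly, $\Theta_{\tilde r^{-2}}=\Theta_{R^{-1}}$ matches the planks in step~(1), and $U_{\tau,\tilde r^2}=U_{\tau,R}$ and the sum $\sum_{\tilde r^{-1}\le s\le 1}=\sum_{R^{-1/2}\le s\le 1}$ become, after relabeling $\sigma=s^2$, exactly the right-hand side of \eqref{eq:DefinitionSComplex}. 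There is nothing left to match, so your entire step~(3) (the refinement of $U_{\tau,r^2}$ by $U_{\tau,R}$, which in any case has the size comparison reversed, since $R\le r^2$ makes $U_{\tau,r^2}$ the longer plate) is an artifact of the wrong choice of scale and should be deleted.

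A smaller point: the Cauchy--Schwarz step, $|B_r|^{-1}\|S_{B_r}f\|_{L^2(B_r)}^4\le\int_{B_r}\big(\sum_\theta|f_\theta|^2\big)^2$ followed by summing over the finitely overlapping $B_r$, is valid for every $r$; it does not use $R\le r^2$. So the role you assign to that constraint (``this is where $R\le r^2$ enters'') is also misplaced. The restriction $R\le r^2$ is simply the range in which this direct argument is the one used, to be combined with Lemma~\ref{lem:StartInductionScalesComplex} and the rescaling lemma in the induction-on-scales; it is not a hypothesis that the Cauchy--Schwarz step needs.
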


\smallskip

For $I_K \subseteq [1/2,1]$ a closed interval with length $1/K$ we define
\begin{equation*}
\C \Gamma_{2,K} = \{ (z,z^2/h,h) \in \C^2 \times \R \, : \, |z| \leq 1, \; h \in I_K \}.
\end{equation*}

For $R \gg 1$ we shall eventually choose $K = R^\delta$. For $f \in \mathcal{S}(\R^5)$ with $\text{supp}(\hat{f}) \subseteq \mathcal{N}_{R^{-1}}(\C \Gamma_2)$ we consider the quantity $S_K(r,R)$ defined as infimum over $C$ such that \eqref{eq:DefinitionSComplex} holds.

On the one hand, we have by decomposing the height into $1/K$-intervals:
\begin{equation}
\label{eq:SlicingComplexCone}
S(r,R) \lesssim K S_K(r,R).
\end{equation}
Secondly, the introduction of $S_K$ allows us to jump start the induction-on-scales by noting that for $\text{supp}(\hat{f}) \subseteq \mathcal{N}_{R^{-1}}(\C \Gamma_{2,K})$ we have
\begin{equation}
\label{eq:SquareFunctionEstimateComplexSlice}
\big( \int_{\R^5} |f|^4 \big)^{\frac{1}{4}} \lesssim \big( \int_{\R^5} \big( \sum_{\theta \in \Theta_{K^{-1}}} |f_{\theta}|^2 \big)^2 \big)^{\frac{1}{4}}.
\end{equation}
\begin{proof}
Writing
\begin{equation*}
\int_{\R^5} |f|^4 = \int_{\R^5} \big( \sum_{\theta_1 \in \Theta_{K^{-1}}} f_{\theta_1} \cdot \sum_{\theta_2 \in \Theta_{K^{-1}}} f_{\theta_2} \big) \overline{\big( \sum_{\theta_3 \in \Theta_{K^{-1}}} f_{\theta_3} \cdot \sum_{\theta_4 \in \Theta_{K^{-1}}} f_{\theta_4} \big)}.
\end{equation*}
and applying Plancherel's theorem, like in the proof of Theorem \ref{thm:DegenerateSquareFunctionEstimateCurves} we are led to analyzing solutions to the system for $h_i \in I_K$ and $(z_i,z_i^2/h_i,h_i) \in \theta_i$:
\begin{equation*}
\left\{ \begin{array}{cl}
z_1 + z_2 &= z_3 + z_4, \\
\frac{z_1^2}{h_1} + \frac{z_2^2}{h_2} &= \frac{z_3^2}{h_3} + \frac{z_4^2}{h_4} + \mathcal{O}(K^{-1}).
\end{array} \right.
\end{equation*}

Since $h_i \in I_K$, the above implies
\begin{equation*}
\left\{ \begin{array}{cl}
z_1 + z_2 &= z_3 + z_4, \\
z_1^2 + z_2^2 &= z_3^2 + z_4^2 + \mathcal{O}(K^{-1}),
\end{array} \right.
\end{equation*}
and now, the biorthogonality follows from the complex extension of the C\'ordoba--Fefferman square function estimate (see \cite{BiggsBrandesHughes2022}). With the biorthogonality at hand, \eqref{eq:SquareFunctionEstimateComplexSlice} follows from applying the Cauchy-Schwarz inequality.
\end{proof}
Combining \eqref{eq:SquareFunctionEstimateComplexSlice} with Proposition \ref{prop:ComplexKakeyaTwoScale}, we record the following lemma.
\begin{lemma}
\label{lem:StartInductionScalesComplex}
The following estimate holds:
\begin{equation*}
S_K(1,K) \lesssim 1.
\end{equation*}
\end{lemma}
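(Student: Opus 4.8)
The plan is to unwind the definition of $S_K(1,K)$ and recognise that, at the first scale $r=1$, the inequality \eqref{eq:DefinitionSComplex} to be established is simply the loss-free base-scale square function estimate \eqref{eq:SquareFunctionEstimateComplexSlice} followed by the Kakeya estimate, once the scale parameters are matched. So fix $f\in\mathcal{S}(\R^5)$ with $\text{supp}(\hat f)\subseteq\mathcal{N}_{K^{-1}}(\C\Gamma_{2,K})$; we must produce an absolute constant $C$ for which \eqref{eq:DefinitionSComplex} holds with $(r,R)=(1,K)$.

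First I would handle the left-hand side. Because $\text{supp}(\hat f)$ has diameter $\mathcal{O}(1)$, the functions $|f|^2$ and $\sum_{\theta\in\Theta_{K^{-1}}}|f_\theta|^2$ are essentially constant on unit balls, and — as in the reduction of Theorem~\ref{thm:SquareFunctionComplexCone} to the bound $S(1,R)\lesssim_\varepsilon R^\varepsilon$ — the first-scale sum $\sum_{B_1\in\mathcal{U}_1}|B_1|^{-1}\|S_{B_1}f\|^4_{L^2(B_1)}$ is comparable to $\|f\|^4_{L^4(\R^5)}$; for the direction needed here one only uses Cauchy--Schwarz on each $B_1$ together with the bounded overlap of $\mathcal{U}_1$. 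Since $\hat f$ is supported in the $K^{-1}$-neighbourhood of the \emph{short} cone $\C\Gamma_{2,K}$, whose height is confined to an interval of length $1/K$, the C\'ordoba--Fefferman-type estimate \eqref{eq:SquareFunctionEstimateComplexSlice} is available and yields
\[ \|f\|^4_{L^4(\R^5)}\lesssim\int_{\R^5}\Big(\sum_{\theta\in\Theta_{K^{-1}}}|f_\theta|^2\Big)^2. \]

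Next I would apply the Kakeya estimate, Proposition~\ref{prop:KakeyaComplexCone} (whose two-scale reformulation is Proposition~\ref{prop:ComplexKakeyaTwoScale}), with $r=K^{1/2}$, so that $r^{-2}=K^{-1}$, $r^{-1}=K^{-1/2}$ and $r^2=K$:
\[ \int_{\R^5}\Big(\sum_{\theta\in\Theta_{K^{-1}}}|f_\theta|^2\Big)^2\lesssim\sum_{K^{-1/2}\leq s\leq 1}\;\sum_{\tau\in\Theta_{s^2}}\;\sum_{U\parallel U_{\tau,K}}|U|^{-1}\|S_Uf\|^4_{L^2(U)}. \]
Reparametrising the dyadic sum by $\sigma=s^2$, the range $K^{-1/2}\leq s\leq 1$ turns into $K^{-1}\leq\sigma\leq 1$ and $\Theta_{s^2}=\Theta_\sigma$, so the right-hand side is exactly $\sum_{K^{-1}\leq\sigma\leq1}\sum_{\tau\in\Theta_\sigma}\sum_{U\parallel U_{\tau,K}}|U|^{-1}\|S_Uf\|^4_{L^2(U)}$, i.e. the right-hand side of \eqref{eq:DefinitionSComplex} with $R=K$. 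Chaining the three displays produces \eqref{eq:DefinitionSComplex} for $(r,R)=(1,K)$ with an absolute constant, that is, $S_K(1,K)\lesssim 1$.

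This argument has no genuine difficulty — it is a bookkeeping step serving to seed the induction on scales. The two points to be careful about are the identification of the first-scale quantity with $\|f\|^4_{L^4}$, which relies on the $\mathcal{O}(1)$ diameter of the Fourier supports, and the scale matching $r=K^{1/2}$, $\sigma=s^2$ between Proposition~\ref{prop:KakeyaComplexCone} and the definition of $S_K$; the confinement of the height to an interval of length $1/K$ is precisely what licenses the loss-free estimate \eqref{eq:SquareFunctionEstimateComplexSlice}, so it is indispensable.
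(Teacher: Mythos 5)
Your proposal is correct and is exactly the paper's argument: the paper obtains Lemma \ref{lem:StartInductionScalesComplex} by "combining \eqref{eq:SquareFunctionEstimateComplexSlice} with Proposition \ref{prop:ComplexKakeyaTwoScale}", i.e., identifying the unit-scale left-hand side with $\|f\|_{L^4}^4$, invoking the loss-free slice square function estimate, and then the Kakeya estimate at the matched scale $r=K^{1/2}$. The scale bookkeeping ($r^{-2}=K^{-1}$, $\sigma=s^2$, $U_{\tau,r^2}=U_{\tau,K}$) is also as intended.
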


The final ingredient in the induction-on-scales from \cite{GuthWangZhang2020} is the Lorentz rescaling, which enlarges small sectors from the real cone to the full cone. Presently, we map $d$-sectors $\tau \subseteq \C \Gamma_2$ defined by
\begin{equation}
\label{eq:ComplexdSector}
\tau = \{ (z,\zeta = z^2 /h, h) : \, h \in [1/2,1], \; \big| \frac{z}{h} - \nu \big| \leq d \},
\end{equation}
to the full cone $\C \Gamma_2$.

\smallskip

The complex generalization is straight-forward, and we record its properties in the following:
\begin{lemma}
Let $\tau \subseteq \C \Gamma_2$ be a sector centered at $\nu \in \C$, $|\nu| \leq 1$ with aperture $d$ given by \eqref{eq:ComplexdSector}. There is a linear transformation $(z,\zeta,h) \mapsto (z',\zeta',h')$ given by
\begin{equation*}
z' = d^{-1}(z - \nu h), \quad \zeta' = d^{-2} \zeta + \text{Lin}(z',h'), \quad h' = h,
\end{equation*}
which extends to a map $\mathcal{N}_{R^{-1}}(\tau) \to \mathcal{N}_{d^{-2} R^{-1}}(\C \Gamma_2)$ and establishes a correspondence between $\Theta_{R^{-1}} \ni \theta \subseteq \mathcal{N}_{R^{-1}}(\tau) $ and $\theta' \in \Theta_{d^{-2} R^{-1}}$.
\end{lemma}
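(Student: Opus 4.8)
The plan is to write down the rescaling map explicitly, verify the single algebraic identity that makes it intertwine $\C\Gamma_2$ with itself, and then deduce the neighbourhood and plank statements from the operator norm of the map together with the way it acts on the moving frame $\{\mathbf c,\mathbf t_s,\mathbf t_t,\mathbf n_s,\mathbf n_t,\mathbf n_5\}$. Concretely I would take $\Lambda=\Lambda_\nu\colon\R^5\equiv\C\times\C\times\R\to\R^5$,
\[
\Lambda(z,\zeta,h)=\big(d^{-1}(z-\nu h),\ d^{-2}(\zeta-2\nu z+\nu^2 h),\ h\big),
\]
which is $\R$-linear, is $\C$-linear in $(z,\zeta)$ for each fixed $h$, has inverse $(z',\zeta',h')\mapsto(dz'+\nu h',\,d^2\zeta'+2d\nu z'+\nu^2 h',\,h')$, and whose middle component becomes $d^{-2}\zeta+\mathrm{Lin}(z',h')$ once $z$ is expressed through $(z',h')$. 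The first step is the identity: if $\zeta=z^2/h$ and $|z/h-\nu|\le d$ then $\zeta'=d^{-2}(z-\nu h)^2/h=(z')^2/h'$, $|z'/h'|=d^{-1}|z/h-\nu|\le1$ and $h'=h\in[1/2,1]$, so $\Lambda(\tau)\subseteq\C\Gamma_2$; the same computation applied to $\Lambda^{-1}$ gives $\Lambda^{-1}(\C\Gamma_2)\subseteq\tau$, hence $\Lambda$ is a bijection of $\tau$ onto the full cone. This is the only curve-specific computation, and since it is the verbatim complexification of the Lorentz rescaling for the real parabola it should take a couple of lines.

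For the neighbourhood claim I would note that, with $d\le1$ and $|\nu|\le1$, every entry of $\Lambda$ is $O(d^{-2})$, so $\|\Lambda\|_{\mathrm{op}}\lesssim d^{-2}$. Given $\omega\in\mathcal N_{R^{-1}}(\tau)$ I pick a nearest point $p\in\tau\subseteq\C\Gamma_2$, so $\Lambda p\in\C\Gamma_2$ by the first step and $|\Lambda\omega-\Lambda p|\le\|\Lambda\|_{\mathrm{op}}R^{-1}\lesssim d^{-2}R^{-1}$; thus $\Lambda$ maps $\mathcal N_{R^{-1}}(\tau)$ into $\mathcal N_{Cd^{-2}R^{-1}}(\C\Gamma_2)$ for an absolute constant $C$.

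For the plank correspondence I would use that $\Lambda$, being a linear bijection carrying $\C\Gamma_2\cap\tau$ onto $\C\Gamma_2$, sends tangent planes to tangent planes; a direct substitution then gives, with $w:=d^{-1}(z-\nu)$, the relations $\Lambda\mathbf c(z)=\mathbf c(w)$, $\Lambda\mathbf t_s(z)=d^{-1}\mathbf t_s(w)$, $\Lambda\mathbf t_t(z)=d^{-1}\mathbf t_t(w)$ (the last by $\C$-linearity, i.e.\ $\Lambda$ commuting with multiplication by $i$ on the $\C^2$-directions), together with $|\Lambda\mathbf n_s(z)|,|\Lambda\mathbf n_t(z)|,|\Lambda\mathbf n_5|\lesssim d^{-2}$. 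Feeding these into the defining parametrisation of $\theta_z\in\Theta_{R^{-1}}$: the two tangential coefficients pick up a factor $d^{-1}$, so the tangential side $R^{-1/2}$ becomes $(d^{-2}R^{-1})^{1/2}$, while the contribution of $\mathbf n_s,\mathbf n_t,\mathbf n_5$ stays of size $\lesssim d^{-2}R^{-1}$ (and, in the admissible range $d\ge R^{-1/2}$, its tangential part is dominated by $(d^{-2}R^{-1})^{1/2}$). Hence $\Lambda\theta_z\subseteq C\,\theta'_w$ with $\theta'_w\in\Theta_{d^{-2}R^{-1}}$ the canonical plank based at $w$, after replacing $\nu$ by a nearby point of the $R^{-1/2}$-lattice and $\theta'_w$ by a neighbour. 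Finally $z\mapsto w$ is an affine bijection between the $\Theta_{R^{-1}}$-base points lying in $\tau$ and the $\Theta_{d^{-2}R^{-1}}$-base points in $[-1,1]^2$ — both collections of cardinality $\sim d^2R$ — which upgrades the inclusion to the asserted bijective correspondence $\theta\leftrightarrow\theta'$.

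The main difficulty is purely bookkeeping: one must check that the image of a canonical $R^{-1}$-plank is again a canonical plank at the new scale $d^{-2}R^{-1}$ rather than merely some parallelepiped, which forces one to track the anisotropy of $\Lambda$ (central direction preserved, tangential stretched by $d^{-1}$, normal by at most $d^{-2}$) and to confirm that the two plank families match in number so that the correspondence is genuinely a bijection. As the paper indicates, there is no new analytic phenomenon relative to the real degree-two case; the only change is that all Taylor expansions and frame identities are now carried out over $\C$.
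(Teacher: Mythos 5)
Your proposal is correct and follows exactly the route the paper intends: the paper omits the proof of this lemma, declaring it the "straight-forward" complexification of the real Lorentz rescaling proved in Section \ref{section:SquareFunctionsCones} (define $z'=d^{-1}(z-\nu h)$, complete the square so that $\zeta-2\nu z+\nu^{2}h=(z-\nu h)^{2}/h$, rescale by $d^{-2}$, and track the anisotropic action on the frame), and your computation supplies precisely that argument, including the correct verification that the tangential directions stretch by $d^{-1}$ and the normal error stays below the new plank dimensions in the admissible range $d\gtrsim R^{-1/2}$. No gaps.
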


With the Lorentz rescaling at disposal, we record the following analog of \cite[Lemma~]{GuthWangZhang2020}:
\begin{lemma}
\label{lem:RescalingComplexCone}
For any $1 \leq r_1 < r_2 \leq r_3$, it holds
\begin{equation*}
S_K(r_1,r_3) \leq \log(r_2) S_K(r_1,r_2) \max_{s \in [r_2^{-\frac{1}{2}},1]} S_K(s^2 r_2, s^2 r_3).
\end{equation*}
\end{lemma}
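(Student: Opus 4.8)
The plan is to interpolate through the intermediate scale $r_2$ and then Lorentz-rescale each of the resulting wave envelopes, following the roadmap of \cite{GuthWangZhang2020}. Given $f \in \mathcal{S}(\R^5)$ with $\text{supp}(\hat f) \subseteq \mathcal{N}_{r_3^{-1}}(\C\Gamma_{2,K})$, note that $r_3 \geq r_2$ gives $\text{supp}(\hat f) \subseteq \mathcal{N}_{r_2^{-1}}(\C\Gamma_{2,K})$ as well, so the first step is to invoke the definition of $S_K(r_1,r_2)$ to obtain
\[
\sum_{B_{r_1} \in \mathcal{U}_{r_1}} |B_{r_1}|^{-1} \| S_{B_{r_1}} f \|^4_{L^2(B_{r_1})} \leq S_K(r_1,r_2) \sum_{r_2^{-1} \leq \sigma \leq 1} \sum_{\tau \in \Theta_\sigma} \sum_{U \parallel U_{\tau,r_2}} |U|^{-1} \| S_U f \|^4_{L^2(U)} .
\]
It then remains to control the right-hand side one sector $\tau$ at a time.

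Fix a dyadic $\sigma = s^2 \in [r_2^{-1},1]$, so $s \in [r_2^{-\frac{1}{2}},1]$, and $\tau \in \Theta_\sigma$, a sector of aperture $\sim s$. The main step is to apply the complex Lorentz rescaling $\Lambda_\tau$ of the preceding lemma (with $d = s$) to $f_\tau$. Since $\Lambda_\tau$ carries $\mathcal{N}_{r_3^{-1}}(\tau)$ onto $\mathcal{N}_{s^{-2}r_3^{-1}}(\C\Gamma_2) = \mathcal{N}_{(s^2 r_3)^{-1}}(\C\Gamma_2)$ and fixes the height coordinate, the rescaled function $g$ will have $\text{supp}(\hat g) \subseteq \mathcal{N}_{(s^2 r_3)^{-1}}(\C\Gamma_{2,K})$ with the same interval $I_K$. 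I expect the wave-envelope bookkeeping of \cite{GuthWangZhang2020}, now performed with the complex frame $\{\mathbf{t}_s,\mathbf{t}_t,\mathbf{n}_s,\mathbf{n}_t,\mathbf{n}_5\}$, to show that $\Lambda_\tau$ induces a bijection between $\{\theta \in \Theta_{r_3^{-1}} : \theta \subseteq \tau\}$ and $\Theta_{(s^2 r_3)^{-1}}$, that it straightens $U_{\tau,r_2}$ into a set comparable to a ball of radius $\sim s^2 r_2$, and that it carries each $U_{\tau',r_3}$ (for $\tau' \subseteq \tau$ with $\tau' \in \Theta_{\sigma'}$, $r_3^{-1} \leq \sigma' \leq \sigma$) to an envelope comparable to $U_{\Lambda_\tau(\tau'),\,s^2 r_3}$ with $\Lambda_\tau(\tau') \in \Theta_{\sigma'/s^2}$, the Jacobian of $\Lambda_\tau$ cancelling against the volume weights and the $L^2$ change of variables exactly as in the non-degenerate case. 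Applying the definition of $S_K(s^2 r_2, s^2 r_3)$ to $g$ and transporting the estimate back through $\Lambda_\tau$ should then yield
\[
\sum_{U \parallel U_{\tau,r_2}} |U|^{-1} \| S_U f \|^4_{L^2(U)} \leq S_K(s^2 r_2, s^2 r_3) \sum_{r_3^{-1} \leq \sigma' \leq \sigma} \; \sum_{\substack{\tau' \in \Theta_{\sigma'} \\ \tau' \subseteq \tau}} \; \sum_{U' \parallel U_{\tau',r_3}} |U'|^{-1} \| S_{U'} f \|^4_{L^2(U')} .
\]

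Finally I would sum this over $\sigma \in [r_2^{-1},1]$ and $\tau \in \Theta_\sigma$, bound $S_K(s^2 r_2, s^2 r_3)$ by $\max_{s \in [r_2^{-\frac{1}{2}},1]} S_K(s^2 r_2, s^2 r_3)$, and perform the multiplicity count: each $\tau' \in \Theta_{\sigma'}$ with $\sigma' \in [r_3^{-1},1]$ lies in $O(1)$ sectors $\tau \in \Theta_\sigma$ for each dyadic $\sigma \in [\max(\sigma',r_2^{-1}),1]$, and the number of such $\sigma$ is $\lesssim \log r_2$ — because $\sigma' \geq r_2^{-1}$ forces $\log(1/\sigma') \leq \log r_2$, while $\sigma' < r_2^{-1}$ leaves only the dyadic range $[r_2^{-1},1]$. (Note that $\sigma = 1$ always occurs, so every $\tau'$ at every scale is accounted for.) Combining with the first display gives $S_K(r_1,r_3) \leq \log(r_2)\, S_K(r_1,r_2) \max_{s \in [r_2^{-\frac{1}{2}},1]} S_K(s^2 r_2, s^2 r_3)$. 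I expect the main obstacle to be precisely the rescaling step in the five-dimensional complex setting: one must verify that $\Lambda_\tau$ genuinely turns the anisotropic envelope $U_{\tau,r_2}$ into an isotropic ball of radius $\sim s^2 r_2$ and the finer envelopes $U_{\tau',r_3}$ into envelopes at scale $s^2 r_3$, and that the accompanying Jacobian powers cancel against the $|U|^{-1}$ and $|B|^{-1}$ normalizations on both sides. Granting the complex Lorentz rescaling lemma, this is formally parallel to \cite{GuthWangZhang2020}, the only genuinely new point being that the tangential/normal splitting is governed by the complex frame above rather than the real one; everything else, including the logarithmic multiplicity count, goes through unchanged.
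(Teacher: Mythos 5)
Your proposal is correct and follows exactly the route the paper intends: the paper in fact gives no proof of this lemma, merely recording it as the analog of the two-scale Lorentz-rescaling lemma in \cite{GuthWangZhang2020}, and your sketch is a faithful transplant of that argument to the complex frame. The pass through the intermediate scale $r_2$, the per-sector Lorentz rescaling followed by an application of $S_K(s^2r_2,s^2r_3)$, and the $\log(r_2)$ dyadic multiplicity count all match what the GWZ argument supplies; the wave-envelope bookkeeping you flag as the point to verify is indeed the only substantive adaptation, and the Lorentz-rescaling lemma immediately preceding the statement in the paper is there precisely to supply it.
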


\begin{proof}[Proof~of~Theorem~\ref{thm:SquareFunctionComplexCone}]
Like in \cite{GuthWangZhang2020} it suffices to show for $1\leq r < R$
\begin{equation*}
S(r,R) \leq C_\varepsilon \big( \frac{R}{r} \big)^\varepsilon.
\end{equation*}
Choosing $K=R^\delta$, by \eqref{eq:SlicingComplexCone} it is enough to show
\begin{equation*}
S_K(r,R) \leq C_\varepsilon \big( \frac{R}{r} \big)^\varepsilon.
\end{equation*}

Taking Lemmas \ref{lem:StartInductionScalesComplex}, Proposition \ref{prop:ComplexKakeyaTwoScale}, and Lemma \ref{lem:RescalingComplexCone} together, this follows from the same arguments as in the proof of \cite[Proposition~3.4]{GuthWangZhang2020}. This finishes the proof of Theorem \ref{thm:SquareFunctionComplexCone}.
\end{proof}

\section*{Acknowledgements}
Financial support from the Humboldt foundation (Feodor-Lynen fellowship) and partial support by the NSF grant DMS-2054975 is gratefully acknowledged.

\bibliographystyle{plain}

\end{document}